\newtheorem{thm}{Theorem}[section]
\newtheorem{cor}[thm]{Corollary}
\newtheorem{lem}[thm]{Lemma}
\newtheorem{prop}[thm]{Proposition}
\theoremstyle{definition}
\newtheorem{defn}{Definition}[section]
\newtheorem{rem}{Remark}[section]
\newtheorem{exm}[thm]{Example}
\newcommand{\R}{{\mathbb R}}
\newcommand{\C}{{\mathbb C}}
\newcommand{\Z}{{\mathbb Z}}
\newcommand{\CP}{{\mathbb C}{\mathbb P}}
\newcommand{\g}{{\mathfrak  g}}
\newcommand{\calB}{{\mathcal B}}
\newcommand{\calE}{{\mathcal E}}
\newcommand{\calG}{{\mathcal G}}
\newcommand{\calH}{{\mathcal H}}
\newcommand{\calM}{{\mathcal M}}
\newcommand{\calR}{{\mathcal R}}
\newcommand{\calL}{{\mathcal L}}
\renewcommand{\to}{\longrightarrow}
\newcommand{\ev}{\operatorname{ev}}
\newcommand{\Diff}{\operatorname{Diff}}
\newcommand{\Tr}{\operatorname{Tr}}
\newcommand{\tr}{\operatorname{tr}}
\newsavebox{\savepar}
\numberwithin{equation}{section}
\newcounter{labelflag} \setcounter{labelflag}{0}
\newcommand{\labelon}{\setcounter{labelflag}{1}}
\newcommand{\Label}[1]{
                       \ifnum\thelabelflag=1
                          \ifmmode
                             \makebox[0in][l]{\qquad\fbox{\rm#1}}
                          \else
                             \marginpar{\vspace{0.7\baselineskip}
                                        \hspace{-1.1\textwidth}
                                        \fbox{\rm#1}}
                          \fi
                       \fi
                       \label{#1}
                      }
  \newcommand{\BbR}{{\mathbb R}}
 \newcommand{\BbC}{{\mathbb C}}
 \newcommand{\BbZ}{{\mathbb Z}}
 \newcommand{\pdo}{\Psi{\rm DO}}
 \newcommand{\dvol}{{\rm dvol}}
 \newcommand{\dg}{\dot\gamma}
 \newcommand{\ints}{\int_{S^1}}
 \newcommand{\dir}{\partial\kern-.570em /}
 \newcommand{\dire}{\partial\kern-.570em /{}^{\rm eq}}
 \newcommand{\GG}{\pdo_0^*}
 \newcommand{\diffm}{{\rm Diff}(M) }
 \newcommand{\diff}{{\rm Diff} }
 \newcommand{\maps}{{\rm Maps} }
 \newcommand{\kk}{2k-1 }
 \newcommand{\resw}{{\rm res}^{ W}}
 \newcommand{\be}{\overline\eta}
  \newcommand{\bn}{\overline\nabla}
  \newcommand{\bxi}{\overline\xi}
\newcommand{\br}{\overline R}
\newcommand{\bmk}{\overline{ M}_p}
\newcommand{\bg}{\overline g}
\newcommand{\bx}{\overline X}
\newcommand{\by}{\overline Y}
\newcommand{\bp}{\overline \Phi}
\newcommand{\xl}{X^L}
\newcommand{\yl}{Y^L}
\newcommand{\zl}{Z^L}
\newcommand{\la}{\langle}
\newcommand{\ra}{\rangle}
\newcommand{\sgn}{{\rm sgn}}
\newcommand{\eb}{\overline e}
\newcommand{\ebo}{\eb_{\sigma(1)}}
\newcommand{\ebt}{\eb_{\sigma(2)}}
\newcommand{\ebth}{\eb_{\sigma(3)}}
\newcommand{\ebf}{\eb_{\sigma(4)}}
\newcommand{\ebfi}{\eb_{\sigma(5)}}
\newcommand{\el}{e^L}
\newcommand{\elt}{e^L_{\sigma(2)}}
\newcommand{\elth}{e^L_{\sigma(3)}}
\newcommand{\elf}{e^L_{\sigma(4)}}
\newcommand{\elfi}{e^L_{\sigma(5)}}
\newcommand{\eet}{e_{\sigma(2)}}
\newcommand{\eeth}{e_{\sigma(3)}}
\newcommand{\eef}{e_{\sigma(4)}}
\newcommand{\eefi}{e_{\sigma(5)}}
\newcommand{\aoa}{A_{1a}}
\newcommand{\aob}{A_{1b}}
\newcommand{\aoc}{A_{1c}}
\newcommand{\ssum}{ \sum_{\sigma(1) = 1}\sgn(\sigma)}
\newcommand{\vol}{{\rm vol}}
\newcommand{\mab}{\overline {M}_{(a,b)} }
\newcommand{\mabp}{\overline {M}_{p(a,b)} }
\newcommand{\lp}{\Lambda^2_+}
\newcommand{\lm}{\Lambda^2_-}
\newcommand{\rpp}{R_{++}}
\newcommand{\rmm}{R_{--}}
\newcommand{\bpa}{\overline{M}_{p\vec a}}
\newcommand{\ba}{\overline{M}_{\vec a}}
\newcommand{\tid}{\rm Id}
\newcommand{\chk}{ch^W_{[2k]}}
\begin{document}

\newpage\null\vskip-4em
  \noindent\scriptsize{}
  \scriptsize{
  \textsc{AMS Mathematics Subject
  Classification Numbers: 58J28, 58J40.
} }
\vskip 0.5 in

\normalsize

\title{The Geometry of Loop Spaces II:  Characteristic Classes}
\author[Y. Maeda]{Yoshiaki Maeda}
\thanks{Y. Maeda is supported in part by JSPS KAKENHI no. 23340018 and no. 25610015, and  by JSPS Core-to-Core program on Foundation of a Global Research Cooperative Center in Mathematics Focused on Number Theory and Geometry }
\address{Department of Mathematics\\
Keio University}
\email{maeda@math.keio.ac.jp}
\author[S. Rosenberg]{Steven Rosenberg}
\address{Department of Mathematics and Statistics\\
  Boston University}
\email{sr@math.bu.edu}
\author[F. Torres-Ardila]{Fabi\'an Torres-Ardila}
\address{Center of Science and Mathematics in Context\\
  University of Massachusetts Boston}
\email{fabian.torres-ardila@umb.edu}

\begin{abstract}    In this follow-up paper to \cite{MRT3}, we prove that $|\pi_1(\diff(\overline M))| = \infty$ for the total space $\overline M$ of  circle bundles associated to high multiples of a K\"ahler class 
over integral K\"ahler surfaces.  To detect nontrivial elements of $\pi_1(\diff(\overline M))$, we develop 
a theory of  Chern-Simons classes
  $CS_{2k-1}^W(\nabla^0, \nabla^1) 
 \in H^{2k-1}(LM^{2k-1};\R)$ on the loop space $LM$ of a Riemannian manifold.
These classes use the Wodzicki residue of the connection and curvature forms of 
the $L^2$ connection $\nabla_0$ and the Sobolev parameter $s=1$ connection $\nabla^1$ on $LM$, as these forms
take values in pseudodifferential operators.
 \end{abstract}

\maketitle
\definecolor{ColorEdit}{named}{red}

\noindent {\bf Keywords:}  Loop spaces, characteristic classes, pseudodifferential operators, Wodzicki residue, diffeomorphism groups

\bigskip

\centerline{Dedicated to the memory of Prof. Shoshichi Kobayashi}

\bigskip

\noindent \textcolor{red}{Warning: The main results in this paper are valid for the isometry groups of $\overline M$, not the diffeomorphism groups.  Corrections are in {\em The Geometry of Loop Spaces II: Corrections}, arXiv:2405.00651.}

\section{Introduction}

The loop space $LM$ of a Riemannian manifold $(M,g)$ admits a family of Riemannian metrics $g^s$ depending on $g$ and a Sobolev space parameter $s \geq 0.$  In \cite{MRT3}, we calculated the Levi-Civita connections associated to $g^s.$  In this paper, we develop a theory of characteristic classes on 
the tangent bundle $TLM$ associated to these metrics.  It turns out  that the Pontrjagin classes  are trivial, but the associated Chern-Simons classes are nontrivial in $H^{\rm odd}(LM,\R)$
in general.   As the main application, we use a specific 
Chern-Simons class  to prove that $|\pi_1(\diff(\overline M))| = \infty$ for the total space $\overline M$ of  circle bundles associated to high multiples of a K\"ahler class 
over integral K\"ahler surfaces.  These circle bundles include several topologically distinct infinite families of $5$-manifolds, including $S^2\times S^3$ and some lens spaces.

To develop a Chern-Weil theory for these connections, we need invariant polynomials on the Lie algebra of the structure group of the Levi-Civita connections, which by \cite{MRT3} 
is the group of invertible zeroth order pseudodifferential operators ($\pdo$s). The naive choice is
the standard polynomials $\Tr(\Omega^k)$ of the curvature $\Omega = \Omega^s$,
where Tr is the operator trace.  However, $\Omega^k$ is zeroth order
and hence not trace class, and in any case the operator trace
is impossible to compute in general.  Instead, as in \cite{P-R2} we use the 
Wodzicki residue, the only trace on the full
algebra of $\pdo$s.  
Following Chern-Simons
 \cite{C-S} as much as possible, we build a theory of
Wodzicki-Chern-Simons (WCS) classes, which gives classes in $H^{2k-1}(LM^{2k-1},\R)$ associated to 
the Wodzicki-Chern character of $TLM$.

 There are several differences from  finite
dimensional Chern-Simons theory. The absence of a Narasimhan-Ramanan universal connection
theorem means that we do not have a theory of differential characteristic classes
(see e.g. \cite{FL}).  Moreover, the use of the Wodzicki residue allows to define WCS classes associated to the Chern character but not to the Chern classes.  On the positive side,
since we have a family of connections on $LM$, we can define the relative  $\R$-valued, not just 
the absolute $\R/\Z$-valued, WCS classes associated to a Riemannian metric on $M$. Finally,  if 
dim$(M)=3$, the
 WCS form vanishes, a result without a finite dimensional analogue.

 In contrast to the operator trace, the Wodzicki residue is locally
 computable, so we can write explicit expressions for the WCS classes.
In particular, we can see how the WCS classes depend on the Sobolev parameter $s$, and 
hence define   ``regularized" or $s$-independent WCS classes.  
 As the main application, we compute the integral of $CS_5^W\in H^5(L\overline M, \R)$ over a specific $5$-cycle  for $\overline M$ as above.
 This leads to the results on $\pi_1(\diff(\overline M)).$

For related results on characteristic classes on infinite rank bundles with a group of $\pdo$s as structure group, see 
 \cite{lrst, paycha, P-R2}. For the important case of loop groups, see \cite{freed}.  For the general theory of bundles with $\pdo$s or Fourier integral operators as structure group, see \cite{MM}.

\medskip

This paper is organized as follows. In \S2, we review  finite dimensional Chern-Weil and 
Chern-Simons theory, and use the Wodzicki residue to define residue Chern character classes (RCC) and residue or Wodzicki-Chern-Simons (WCS) classes (Definition \ref{def:WCS}) on $LM$.  We prove the necessary vanishing
of the RCC classes for mapping spaces (and in particular for $LM$) in Proposition
\ref{prop:maps}.  In Theorem \ref{thm:5.5}, we give the explicit local expression for the
 WCS class $CS_{2k-1}^W(g)\in H^{2k-1}(LM^{2k-1})$.  We then define the regularized or $s$-independent WCS class.  
In Theorem \ref{WCSvan}, we give the vanishing result
for the WCS class in dimension $3$. 

In particular, the WCS class which is the analogue of the classical dimension three Chern-Simons class vanishes on loop spaces of 
 $3$-manifolds, so we look for nontrivial examples on $5$-manifolds.
 In \S\ref{dimfive}, we consider the total spaces $\overline M_p$ of circle bundles over integral K\"ahler surfaces associated to  integer multiples $p\omega$ of an integral  K\"ahler class $\omega$.  These $5$-manifolds admit a Sasakian structure, which makes the computation of $CS_5^W$ tractable, if unpleasant.  
 The $S^1$-action given by rotation in the fibers of $\overline M_p$ gives both a $5$-cycle $[a]$ in $L\overline 
 M_p$ and an element of $\pi_1(\diff(\overline M_p)).$  We compute that $\int_{[a]}CS^W_5\neq 0$ for
 $|p| \gg 0$, which  implies $|\pi_1(\diff(\overline M_p))| = \infty$ (Thm. \ref{bigthm}).  In the case of
 specific K\"ahler surfaces such as $T^4$, $\CP^2$, $S^2\times S^2$, and K3 surfaces, we can give sharp estimates on $p.$  We give an alternative explicit computer computation to check that $|\pi_1(\diff(S^2\times S^3)))| = \infty. $ 
 
 Appendix A proves Lemma 3.7, and Appendix B discusses the topology of $LM$ and $\diffm.$

\medskip
We  thank Kaoru Ono  for pointing out an error in  a previous version of 
the paper, and thank Christian Becker, Alan Hatcher, Danny Ruberman, and the referee for helpful comments. 
\medskip

\noindent {\it Notation:}
 $H^*$ always refers to de Rham cohomology for complex valued forms.  By \cite{beggs}, $H^*(LM)\simeq H^*_{\rm sing}(LM,\C).$  Our convention for the curvature tensor is 
 $R(\partial_j, \partial_k)\partial_b = R_{jkb}^{\ \ \ a}\partial_a$.

\section{{\bf Chern-Simons Classes on Loop Spaces}}\label{CSCLS}

We  review Chern-Weil and Chern-Simons theory for
finite dimensional vector bundles \cite[Ch. 4]{Rbook}.

\subsection{{\bf Chern-Weil  and Chern-Simons Theory for Finite Dimensional
  Bundles} }

Let $G$ be a finite dimensional linear Lie group with Lie algebra $\g$, let $Q\to M$ be a 
principal $G$-bundle over a manifold $M$, and let
 $ Q\times_\rho G\to M$ be the vector bundle over a manifold $M$ associated to a representation 
 $\rho$ of $G$.
Set $
 \g^k=\g^{\otimes k}$ and let
\begin{equation*}I^k(G)
= \{P:\g^k\to \C\ | P\ \text{symmetric,
  multilinear, Ad-invariant}\}
\end{equation*}
be the degree $k$ Ad-invariant polynomials on $\g.$

\begin{rem}
For  $G = U(n),$ resp. $O(n)$, $I^k(G)$ is generated by the polarization of
the Newton polynomials $P_k(A) = \Tr(A^k)$, resp. $\Tr(A^{2k})$, where $\Tr$ is the usual trace on finite
dimensional matrices.
\end{rem}

For $\phi\in\Lambda^\ell(M,\g^k)$, $P\in I^k(G)$, set
$P(\phi)=P\circ \phi\in\Lambda^\ell(M)$.

\begin{thm}[The Chern-Weil Homomorphism]
\label{previous}
Let $\nabla$ be a connection on $F\to M$ with curvature $\Omega\in
\Lambda^2(M,Q\times_\rho \g)$. For $P\in I^k(G)$, $P(\Omega)$ is a closed
  $2k$-form on $M$, and so
determines a de Rham cohomology class $[P(\Omega)]\in H^{2k}(M).$
The Chern-Weil map
\begin{equation*}
\oplus_{k}I^k(G)\to H^{*}(M), \ P\mapsto [P(\Omega)]
\end{equation*}
is a well-defined algebra homomorphism, and in particular is independent of the choice of
connection on $F$.
\end{thm}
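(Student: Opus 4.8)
The plan is to reduce everything to two local ingredients---the second Bianchi identity and the infinitesimal form of Ad-invariance---and then to globalize. First I would fix a local frame, writing the connection and curvature as $\g$-valued forms $\omega$ and $\Omega = d\omega + \omega\wedge\omega$. Differentiating the identity $P(\mathrm{Ad}_g X_1,\dots,\mathrm{Ad}_g X_k) = P(X_1,\dots,X_k)$ along a one-parameter subgroup generated by $Y\in\g$ yields the infinitesimal invariance
\[
\sum_{i=1}^k P(X_1,\dots,[Y,X_i],\dots,X_k) = 0,
\]
which extends to $\g$-valued forms with the appropriate Koszul signs. Since $P$ is symmetric and multilinear, $dP(\Omega) = \sum_i P(\Omega,\dots,d\Omega,\dots,\Omega)$; substituting the Bianchi identity $d\Omega = [\Omega,\omega]$ and invoking infinitesimal invariance with $Y=\omega$ gives $dP(\Omega)=0$. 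The same invariance shows $P(\Omega)$ is unchanged under a change of frame $\Omega\mapsto g^{-1}\Omega g$, so the locally defined forms patch to a single global closed $2k$-form, and the cohomology class $[P(\Omega)]$ is well defined.

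For independence of the connection I would run the standard transgression argument. Given two connections $\nabla$ and $\nabla'$, set $\nabla^t = \nabla + t\alpha$ with $\alpha = \nabla'-\nabla$ an $\mathrm{End}$-valued $1$-form, and let $\Omega^t$ be the curvature of $\nabla^t$. A direct computation gives $\tfrac{d}{dt}\Omega^t = d\alpha + [\omega^t,\alpha] = \nabla^t\alpha$, so by symmetry of $P$,
\[
\frac{d}{dt}P(\Omega^t) = k\,P(\nabla^t\alpha,\Omega^t,\dots,\Omega^t).
\]
Combining the Bianchi identity for $\nabla^t$ with infinitesimal invariance, the right-hand side is exact: it equals $d\bigl(k\,P(\alpha,\Omega^t,\dots,\Omega^t)\bigr)$. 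Integrating in $t$ from $0$ to $1$ exhibits $P(\Omega'^{\,})-P(\Omega)$ as the exterior derivative of the transgression form $\int_0^1 k\,P(\alpha,\Omega^t,\dots,\Omega^t)\,dt$, so the two cohomology classes coincide.

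Finally, the algebra homomorphism property follows because the symmetric product on $\oplus_k I^k(G)$ is compatible with the wedge product: for $P\in I^k(G)$ and $P'\in I^{k'}(G)$ one has $(P\cdot P')(\Omega) = P(\Omega)\wedge P'(\Omega)$ at the level of forms, which descends to the cup product in $H^*(M)$. The main obstacle is the transgression step of the second paragraph: verifying that $\tfrac{d}{dt}P(\Omega^t)$ is exact requires carefully marrying the Bianchi identity to infinitesimal Ad-invariance while tracking the signs coming from the form degrees. It is this transgression identity---rather than closedness or multiplicativity---that carries the real content and that will serve as the template for the Wodzicki--Chern--Simons forms built later in the paper.
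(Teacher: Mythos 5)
Your proof is correct and is exactly the standard argument (infinitesimal Ad-invariance plus the Bianchi identity for closedness, and the transgression form $k\int_0^1 P(\alpha,\Omega^t,\dots,\Omega^t)\,dt$ for connection-independence); the paper does not reprove this classical theorem but simply cites \cite[Ch.~4]{Rbook} and records the same transgression formula in its equation (\ref{5.1}). No discrepancies to report.
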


$[P(\Omega)]$ is
 the {\it characteristic class} of $P$. For example, the characteristic class
 associated to  $\frac{1}{k!}\Tr(A^k)$ is the k${}^{\rm th}$ component of the Chern character of $F$.

Part of the theorem's content
is that for any two connections on $F$,
$P(\Omega_1) - P(\Omega_0) = 
dCS_P(\nabla_1,\nabla_0)$  
for some odd form $CS_P(\nabla_1, \nabla_0)$.  Explicitly, for $\nabla_i = d+ \omega_i, i = 0,1, $ locally, 
\begin{equation}\label{5.1}
CS_P(\nabla_1,\nabla_0) = k\int_0^1 P(\omega_1-\omega_0,\overbrace{\Omega_t,...,\Omega_t}^{k-1})
\ dt
\end{equation}
where 
$$\omega_t = t\omega_0+(1-t)\omega_1,\ \ \Omega_t = d\omega_t+\omega_t\wedge\omega_t$$ 
 \cite{Rbook}.  
 Note that $ \omega_1-\omega_0, \Omega_t$ are globally defined $\g$-valued forms on $M$.
 $CS_P(\nabla_1,\nabla_0)$ is the {\it transgression} or {\it Chern-Simons} form associated to $P$, relative
 to  $\nabla_1,\nabla_0$.

\begin{rem}  We relate this construction to the  absolute version of this construction on principal $G$-bundles $\tilde F\stackrel{\pi}{\to} M$ in \cite{C-S}. 
$\pi^*F\to F$ is trivial, so it has 
 a flat connection $\theta_1$ on $\pi^*F$
with respect to a fixed trivialization.
Let $\theta_1$ also
denote the connection $\chi^*\theta_1$ on $F$, 
where $\chi$ is the global section of $\pi^*F.$  For any other connection $\theta = \theta_0$ on $F$,  $\theta_t = t\theta_0, \Omega_t = t\Omega_0 + (t^2-t)\theta_0\wedge \theta_0$. 
We obtain the formulas for the transgression form $TP(\theta)$
on $F$: for 
\begin{equation}\label{eq:ChernSimons}
\phi_t =t\Omega_1+\frac{1}{2}(t^2-t)[\theta,\theta],\ \ 
TP(\theta)=k\int_0^1 P(\theta\wedge \phi^{k-1}_t)dt,
\end{equation}
$dTP(\theta)=P(\Omega_1)\in \Lambda^{2k}(F)$
\cite{C-S}.  Assume the image of $P$ under the abstract Chern-Weil homomorphism
$\oplus_{k}I^k(G)\to H^{*}(BG)$ lies in the image of $H^*(BG,\Z).$
Then
$TP(\Omega_1)$ pushes down to an $\BbR/\BbZ$-class on $M$,
the absolute Chern-Simons class.
\end{rem}

 \subsection{{\bf Chern-Weil and Chern-Simons Theory for $\pdo_0^*$-Bundles}}

Fix $s>0$.  Let $LM = L^{s'}M$ be the space of loops from $S^1\to M$ in a fixed Sobolev class 
$s'\gg s.$  This restriction makes $LM$ into a paracompact Hilbert manifold, which is technically easier to work with than the Fr\'echet manifold $L^FM$ of smooth loops.  By the discussion in Appendix B,
$L^{s'}M$ and $L^FM$ are homotopy equivalent, so the main results hold in both topologies.

The Riemannian metric $g^s$ on $LM$ associated to the Riemannian metric $g$ on $M$ is defined at a loop $\gamma\in LM$ by
$$g^s(X,Y)_\gamma = \frac{1}{2\pi}\int_{S^1} g(X_{\gamma(\theta)}, (1+\Delta)^s Y_{\gamma(\theta})_{\gamma(\theta)}d\theta,$$
where $\theta\mapsto X_{\gamma(\theta)}, Y_{\gamma(\theta)}\in T_{\gamma(\theta)}M
$ are elements
of $T_\gamma LM=\Gamma^{s-1}(\gamma^*TM)$ (sections of Sobolev class $s-1$), $\Delta = \left(\frac{D}{d\theta}\right)^*\frac{D}{d\theta}$ is the Laplacian built from covariant differentiation along $\gamma$, and $(1+\Delta)^s $ is the associated pseudodifferential operator
($\pdo$).  We call $g^s$ the Sobolev $s$-metric.
The connection one-form and curvature two-form of the associated Levi-Civita connection $\nabla_s$ take values in the algebra $\pdo_{\leq 0}$  of nonpositive order $\pdo$s
acting on sections of the trivial $\R^n$ bundle $\calR^n$ over 
$S^1$, where $n = {\rm dim} \ M.$  For $s=0$, we have the usual $L^2$ metric and connection. 
See \cite[\S2]{MRT3} for details. 

Even though the structure group of $TLM$ is the gauge group $\calG$ of $\calR^n$, the connection and curvature forms take values in an algebra larger than the Lie algebra ${\rm Lie}(\calG)$. 
  Thus we should extend the gauge group to the larger group $\GG(\calR^n)$ of zeroth order invertible $\pdo$s acting on sections of $\calR^n$, since
  ${\rm Lie}(\GG) = \pdo_{\leq 0}.$ 
 
 In general, let $\calE\to \calM$ be an infinite rank bundle over a paracompact Banach manifold $\calM$, with the fiber of $\calE$ modeled on a fixed Sobolev class of sections of a finite rank complex bundle $E\to N$, and with structure group $\GG(E)$.  
 For such $\GG$-bundles,
 we can produce
 primary and secondary characteristic classes 
 once we choose Ad-invariant polynomials of $\pdo_{\leq 0}.$

 Since the adjoint action of $\GG$ on $\pdo_{\leq 0}$ is by conjugation,  any trace $T$ on $\pdo_{\leq 0}$ will 
 produce invariant polynomials $A\mapsto T(A^k)$, just as for $\mathfrak u(n).$ 
 These traces were classified  in \cite{lesch-neira, paycha-lescure}, although there are slight variants
in our special case $N= S^1$ \cite{ponge}.  Roughly speaking, the traces fall into two classes, the leading order symbol trace \cite{P-R2} and the Wodzicki residue.  In this paper,
we consider only the Wodzicki residue, and refer to \cite{LMRT,lrst} for the leading order symbol
trace.

Recall that a classical $\pdo$ $P$ acting on sections of $E\to N$ has an order $\alpha\in \R$ and a symbol expansion $\sigma^P(x,\xi) 
\sim \sum_{k=0}^\infty \sigma^P_{\alpha-k}(x,\xi),$ where $x\in N, \xi\in T^*_xN$, and $\sigma^P_{\alpha-k}(x,\xi)$ is homogeneous of degree $\alpha-k$ in $\xi.$  For  $(x,\xi)$ fixed , $\sigma^P(x,\xi),
 \sigma_{\alpha-k}^P(x,\xi) \in {\rm End}(E_x).$ 
 We note that real $\pdo$s on complexified bundles will have symbols which are real endomorphisms.

The Wodzicki residue of $P$ is
$$\resw(P) = \frac{1}{(2\pi)^n} \int_{S^*N} \tr \sigma_{-n}^P(x,\xi) d\xi\ dx,$$
where $S^*N$ is the unit cosphere bundle over $N$ with respect to a fixed Riemannian metric, and dim$(N)=n.$ It is nontrivial that $\resw$ is independent of coordinates and defines a trace: $\resw[P,Q] = 0.$

We will  restrict attention to the generating invariant polynomials\\
 $P_k(A_1,\ldots,A_k)$, the
symmetrization of
$\frac{1}{k!}\Tr(A_1\cdot\ldots\cdot A_k)$, whose corresponding characteristic class $\frac{1}{k!}[\Tr(\Omega^k)]$ is the degree $2k$ component of the Chern character.  
We
only consider $\mathcal E = TLM$, which here denotes the complexified tangent bundle.
Thus the corresponding Chern classes are really Pontrjagin classes.  For $TLM$, 
$E\to N$ is the trivial complex bundle $S^1\times \C^n\to S^1$.

\begin{defn}  \label{def:WCS} 
(i) The k${}^{\rm th}$ {\it residue Chern character (RCC) form} of a $\pdo_0^*$-connection
$\nabla$ on $TLM$ with curvature $\Omega$ is
\begin{equation}\label{5.1a}
\chk(\Omega)(\gamma) =
 \int_{S^*S^1}\tr\sigma_{-1}(\Omega^{k}) \ d\xi  dx.
\end{equation}
As above, for each $\gamma\in LM$,
$\sigma_{-1}(\Omega^k)$ is a $2k$-form with values in  endomorphisms
 of a trivial bundle
over $S^*S^1$, so $\chk \in \Lambda^{2k}(LM, \C).$ For convenience, we omit the usual $\frac{1}{k!}$ factor from the 
RCC form, and the usual constant in the Wodzicki residue.

(ii) The k${}^{\rm th}$ {\it Wodzicki-Chern-Simons (WCS) form} of two $\pdo_0^*$-connections 
$\nabla_0,\nabla_1$ on $TLM$ is
\begin{eqnarray}\label{5.22}
CS^W_{2k-1}(\nabla_1,\nabla_0) &=&k \int_0^1 dt \int_{S^*S^1}\tr\sigma_{-1}((\omega_1-\omega_0)\wedge 
(\Omega_t)^{k-1})\ d\xi dx\\ 
&=&k \int_0^1 {\rm res}^{W} 
[(\omega_1-\omega_0)\wedge 
(\Omega_t)^{k-1}]\ dt\in \Lambda^{2k-1}(LM,\C).\nonumber
\end{eqnarray}

(iii) The  k${}^{\rm th}$ {\it Wodzicki-Chern-Simons form} associated to a Riemannian metric 
$g$ 
on $M$, denoted $CS^W_{2k-1}(g)$,  is $CS^W_{2k-1}(\nabla_1,\nabla_0)\in \Lambda^{2k-1}(LM,\R)
$, where $\nabla_0, \nabla_1$ refer to the 
$L^2$ and Sobolev $s=1$ Levi-Civita connections on $LM$, respectively.

(iv) The
k${}^{\rm th}$ {\it residue Chern class form} $c_k^W(\Omega)$ is defined by the appropriate algebraic combinations of the residue Chern character given by the Newton polynomials; {\it e.g.}, $c_1^W = ch_{[2]}^W,$
$c_2^W(\Omega) = \frac{1}{2}\left( ch_{[2]}^W(\Omega)\wedge ch_{[2]}^W(\Omega) - ch_{[4]}^W(\Omega\right).$
 Let $P= \sum a^KP_K$ be a polynomial in the $U(n)$-invariant polynomials $P_k: A\mapsto \frac{1}{k!}
\Tr(A^k)$, with $K = (k_1,\ldots, k_r)$ and $P_K = P_{k_1}\cdot\ldots\cdot P_{k_r}$. Define the
{\it residue characteristic form} by
$$c_P^W(\Omega) = \sum a^K c_{k_1}^W(\Omega)\wedge \ldots\wedge c_{k_r}^W(\Omega).$$

\end{defn}

\medskip

As in finite dimensions, $c_P^W(\Omega)$ is a closed $2k$-form, with de Rham cohomology
class $c_P(LM)$
 independent of $\nabla$, as 
 $$c_P^W(\Omega_1)  - c_P^W(\Omega_0) =
dCS^W_{P}(\nabla_1,\nabla_0).$$
Here $CS^W_P$ is defined as in (\ref{5.1}), with $P$ replaced with $\resw.$
  In particular, in our notation
$$\chk (\Omega_1)  - \chk (\Omega_0) =
dCS^W_{2k-1}(\nabla_1,\nabla_0).$$
 
\begin{rem}  It is an interesting question to determine all  $\pdo_0^*$-invariant polynomials on 
$\pdo_{\leq 0}.$  As above, $U(n)$-invariant polynomials combine with the Wodzicki residue 
(or the other traces on $\pdo_{\leq 0}$) to give $\pdo_0^*$-polynomials,
but there may be others.  
\end{rem}

We now prove that $TLM$  and more generally the tangent bundle to mapping spaces
Maps$(N,M)$,  with $N$ a closed manifold,
have vanishing residue Chern classes.  As above, we take a Sobolev topology on Maps$(N,M)$ for some
large Sobolev parameter.
We denote the de Rham class of $c_{P}^W(\Omega)$ for a connection on $\mathcal E$ by
$c_{P}(\mathcal E),$  and for the special case $\mathcal E = T{\rm Maps}(N,M)\otimes \C$ by $c^W_{P}(
{\rm Maps}(N,M)).$

\begin{prop} \label{prop:maps} Let $N, M$ be closed manifolds, and let {\rm Maps}${}_f(N,M)$ denote
the component of a fixed $f:N\to M$.  Then the residue characteristic  classes\\
$c_{P}^W({\rm Maps}_f(N,M)) $  vanish.
\end{prop}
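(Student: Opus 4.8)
The plan is to exploit the connection-independence of the de Rham class, recalled just before the proposition, together with a judicious choice of connection. Since
\[
c_P^W(\Omega_1) - c_P^W(\Omega_0) = d\,CS_P^W(\nabla_1,\nabla_0),
\]
the class $c_P^W(\maps_f(N,M))$ may be computed from any single $\GG$-connection on $T\maps_f(N,M)\otimes\C$. I would evaluate everything on the $L^2$ (Sobolev $s=0$) Levi-Civita connection $\nabla_0$, where the symbol calculus collapses.

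The key structural input, taken from the $s=0$ case of \cite{MRT3}, is that $\nabla_0$ is simply the pullback connection: for $X,Y\in\Gamma(\phi^*TM)$ its connection one-form acts pointwise in $x\in N$ by the matrix $\Gamma^a_{bc}(\phi(x))X^b(x)$, and is therefore a zeroth-order multiplication operator whose symbol consists of its homogeneous degree-$0$ term alone (all lower-order components vanish). Its curvature is correspondingly the pointwise pullback of the Riemann tensor of $M$,
\[
(\Omega_0(X_1,X_2)Y)(x) = R_{\phi(x)}(X_1(x),X_2(x))\,Y(x),
\]
again a multiplication operator with symbol concentrated in degree $0$ in $\xi$.

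Granting this, every operator entering the residue characteristic forms --- the powers $\Omega_0^k$ and their wedge products --- is a multiplication operator, hence has full symbol homogeneous of degree $0$ in $\xi$, so its component of degree $-\dim N$ is identically zero once $\dim N\geq 1$. Thus each residue Chern character form vanishes,
\[
\chk(\Omega_0) = \int_{S^*N}\tr\,\sigma_{-\dim N}(\Omega_0^k)\,d\xi\,dx = 0,
\]
and since the $c_P^W$ are assembled from the $\chk(\Omega_0)$ by the universal Newton-polynomial identities, $c_P^W(\Omega_0)=0$ as a form. Therefore $c_P^W(\maps_f(N,M)) = [c_P^W(\Omega_0)] = 0$.

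The only point demanding genuine care --- and hence the main obstacle --- is the structural claim that $\nabla_0$ carries no negative-order symbol terms, i.e.\ that it is a pure multiplication operator. This is exactly the $s=0$ instance of the connection computations in \cite{MRT3}: because the $L^2$ metric involves no $\theta$-derivatives (no factor $(1+\Delta)^s$ with $s>0$), the Koszul formula returns the pointwise Christoffel symbols with no $\pdo$ corrections. I would also remark that the argument is insensitive both to the topology of the pullback bundle $\phi^*TM$ and to the basepoint $f$, since the vanishing is purely symbolic and local over $N$.
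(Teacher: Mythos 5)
Your proposal is correct and follows essentially the same route as the paper: evaluate on the $L^2$ (pullback) Levi-Civita connection, observe via the $s=0$ computation of \cite{MRT3} that its curvature is a pointwise multiplication operator with no negative-order symbol components, conclude that the residue forms vanish identically, and invoke connection-independence of the class. The paper phrases the pullback connection via the evaluation maps $\ev_n$ but the content is identical.
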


\begin{proof}
For $TLM$, the $L^2$ connection 
has curvature $\Omega$ which is a multiplication operator/pointwise endomorphism \cite[Lemma 2.1]{MRT3}.  Thus $\sigma_{-1}(\Omega)$ and hence $\sigma_{-1}(\Omega^{i})$ are zero, 
so the residue characteristic forms $c_{P}(\Omega)$ vanish.

For $n\in N$ and $h:N\to M$,
let $\ev_n: {\rm Maps}_f(N,M)$ be $\ev_n(h) = h(n).$ 
 Then $D_XY(h)(n) \stackrel{\rm def}{=} 
 (\ev_n^*\nabla^{LC,M})_XY(h)(n)$ is the $L^2$ Levi-Civita connection on 
Maps$(N,M).$
As in \cite[Lemma 2.1]{MRT3},
the curvature of $D$ is  
a multiplication operator.  Details are left to the reader.
\end{proof}

In finite dimensions,  characteristic classes are topological obstructions to the
reduction of the structure group, and geometric obstructions to the existence
of a flat connection.  
RCC classes for $\pdo_0^*$-bundles 
are also topological and geometric obstructions, but
the geometric information is a little more refined due to the grading on the
Lie algebra 
 $\pdo_{\leq 0}$.

\begin{prop}
 Let $\calE\to\calB$ be 
 a $\GG$-bundle, for
  $\GG$ acting on
$E\to N^n$.  
If $\calE$ admits a reduction to the gauge group $\calG(E)$, then
  $c_P^W(\calE)  =   0$ for all $P$.
  If $\calE$ admits a 
   $\GG$-connection whose
  curvature has order $-k$, then $
  c_{\ell}(\calE) =0$ for $\ell \geq [n/k].$
 \end{prop}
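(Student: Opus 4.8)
The plan is to prove the two assertions separately, in each case reducing the question to the order of the symbols that enter the Wodzicki residue. For the first assertion I would choose a connection adapted to the reduction, namely a $\calG(E)$-connection $\nabla$ on $\calE$. Since ${\rm Lie}(\calG(E))$ consists of pointwise (zeroth order multiplication) endomorphisms of $E$, the local connection forms $\omega$ are multiplication-operator valued, and because products of multiplication operators are again multiplication operators, so are $\Omega = d\omega + \omega\wedge\omega$ and every power $\Omega^i$. A multiplication operator has $\xi$-independent, degree-zero symbol, so $\sigma_{-j}(\Omega^i) = 0$ for all $j\geq 1$; as $n=\dim N\geq 1$ this gives $\sigma_{-n}(\Omega^i)=0$, hence $\resw(\Omega^i)=0$ and every RCC form $\chk(\Omega)$ vanishes. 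Since $c_P^W$ is a wedge polynomial in the $\chk(\Omega)$, all residue characteristic forms vanish for this $\nabla$, and the transgression identity $c_P^W(\Omega_1)-c_P^W(\Omega_0)=d\,CS^W_P(\nabla_1,\nabla_0)$ upgrades this to vanishing of the class $c_P(\calE)$. This is precisely the mechanism of Proposition~\ref{prop:maps}, now read as a reduction of structure group from $\GG$ to $\calG$.

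For the second assertion I would argue entirely by orders. Composition of $\pdo$s adds orders, so if the curvature $\Omega$ of a chosen $\GG$-connection has order $-k$, then the operator part of $\Omega^\ell$ has order $-k\ell$ and its symbol components $\sigma_j(\Omega^\ell)$ vanish for $j>-k\ell$. The Wodzicki residue sees only the order $-n$ term, so
\[
\int_{S^*N}\tr\,\sigma_{-n}(\Omega^\ell)\,d\xi\,dx = 0 \quad\text{whenever } -n>-k\ell,\ \text{i.e. } \ell>n/k.
\]
This gives the vanishing of the degree $2\ell$ residue Chern character $ch^W_{[2\ell]}(\Omega)$, and hence of its de Rham class, for every $\ell>n/k$, which is the threshold recorded as $\ell\geq[n/k]$. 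Connection-independence again promotes the statement to cohomology.

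The step I expect to be the main obstacle is the passage, in the second assertion, from the residue Chern \emph{character} to the residue Chern \emph{classes}. In the first assertion every single residue vanishes, so every wedge monomial in $c_P^W$ dies and the conclusion holds verbatim for all $P$. In the second, only the components $ch^W_{[2\ell]}$ with $\ell>n/k$ vanish, while those with $\ell\leq n/k$ survive; since $\calB$ is infinite dimensional, wedge powers such as $(ch^W_{[2]})^\ell$ need not vanish, so the Newton polynomial defining $c_\ell^W$ can be nonzero even for large $\ell$. Thus the clean order obstruction is genuinely a statement about the character components $ch^W_{[2\ell]}$; descending it to the classes $c_\ell^W$ requires cancellation among the surviving low-order terms (as occurs, for example, for the odd residue classes of the complexified metric bundle $TLM$, where already $c_1^W=ch^W_{[2]}=0$ by skew-symmetry of $\Omega$). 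Making that reduction precise, and nailing the exact integer threshold in the borderline case $k\mid n$, is where I expect the real care to be needed.
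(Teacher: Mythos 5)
Your argument is essentially the paper's own proof: a reduction to $\calG(E)$ gives a connection whose curvature (and all its powers) is a multiplication operator, so every Wodzicki residue and hence every $c_P^W$ vanishes, while for the second claim the order of $\Omega^\ell$ drops below $-n$ so that $\sigma_{-n}(\Omega^\ell)=0$. The obstacle you flag --- that vanishing of the character components $ch^W_{[2\ell]}$ for large $\ell$ does not by itself kill the Newton-polynomial combinations $c^W_\ell$, which still contain wedge products of surviving low-degree components, and that the threshold $\ell\geq [n/k]$ is delicate when $k\mid n$ --- is genuine, but it is equally unaddressed in the paper's one-line proof, which likewise only establishes vanishing of the residue Chern character in that range.
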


\begin{proof}  If the structure group of  $\calE$ reduces to the gauge
  group, there exists a connection one-form
  with values in Lie$(\calG) = {\rm End}(E)$, the Lie algebra of multiplication
  operators.  Thus the Wodzicki residue of powers of the curvature vanishes,
  so the residue Chern character classes vanish.
For the second statement, the order of $\Omega^\ell$  is less than
$-n$ for $\ell \geq [n/k]$,  so the Wodzicki residue
  vanishes in this range. 
  \end{proof}
  
  However, we do not have examples of nontrivial RCC classes; cf.~\cite{lrst}, where it is 
  conjectured that these classes always vanish.

 \subsection{Local calculations}   

We now use local symbol calculations to compute WCS forms.   
\medskip

\noindent {\bf Notation and Conventions:} For curvature conventions for $M$, we set
$$\Omega^M(\partial_k, \partial_j)^{\ a}_b = R_{kjb}^{\ \ \ a} = R(\partial_k,\partial_j)^{\ a}_b, 
\ R(\partial_k,\partial_j,\partial_b,\partial_a) = \langle R(\partial_k,\partial_j)\partial_b,\partial_a\rangle
= R_{kjba},$$
in agreement with \cite{MRT3}. Our convention for wedge product is 
$\omega\wedge\eta = \frac{(k+\ell)!}{k!\ell!}{\rm Alt}(\omega\otimes \eta)$ for $\omega\in \Lambda^k, \eta
\in\Lambda^\ell.$   
\medskip

For completeness, we restate some results from \cite[Appendix A]{MRT3} about the symbols of the connections $\nabla_0, \nabla_1$ on $TLM.$ 

\begin{lem}\label{MRT3lem} Fix a metric $g$ on $M$ with connection form $\omega^M$ and curvature tensor $R = R^M$.
Let $\omega^0, \Omega^0$ denote the connection and curvature forms for the $L^2$ ($s=0$) metric on 
$LM$, let $\omega^1, \Omega^1$ denote  the connection and curvature forms for the $s=1$ metric, and let $\omega^M$ be the connection one-form for the Levi-Civita connection on $M$.
Fix a loop $\gamma\in LM.$  At $(\theta, \xi)\in T^*S^1$, 

(i) 
$ (\omega^0_X)^a_b = (\omega^M_X)^a_b$, where $\omega_X^M$ is computed at  $\gamma(\theta).$  $\sigma_0(\Omega^0(X,Y))^a_b= R(X,Y)^a_b = R_{cdb}^{\ \ \ a}X^cY^d.$  $\omega^0, \Omega^0$ are multiplication operators, so $\sigma_i(\omega^0) = \sigma
_i(\Omega^0) = 0$ for $i<0.$ 

(ii) $\sigma_0(\omega^1_X)^a_b =  (\omega^M_X)^a_b$.
\begin{eqnarray*}
\frac{1}{i|\xi|^{-2}\xi}\sigma_{-1}(\omega^1_X) &=& \frac{1}{2}(-2R(X,\dg)
-R(\cdot,\dg)X + R(X,\cdot)\dg).
\end{eqnarray*}


\end{lem}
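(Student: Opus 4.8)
The plan is to derive both connections from the Koszul formula, treating $g^1$ as the $L^2$ metric $g^0$ twisted by the positive self-adjoint $\pdo$ $P=\wgt$, and then to extract the symbols order by order. For part (i) I would identify the $L^2$ connection $\nabla^0$ with the pullback of the Levi-Civita connection of $M$ under the evaluation maps, exactly as in Proposition~\ref{prop:maps}: setting $(\nabla^0_XY)_\gamma(\theta)=(\ev_\theta^*\nlc)_XY$ defines a connection on $LM$, and differentiating $g^0(X,Y)_\gamma=\frac{1}{2\pi}\ints g(X,Y)\,d\theta$ under the integral sign reduces metric compatibility and torsion-freeness to the corresponding pointwise statements for $\nlc$ along $\gamma$. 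Hence $\nabla^0$ is the $g^0$-Levi-Civita connection; in a local $g$-orthonormal frame its connection matrix is the pointwise matrix $\omega^M$, a multiplication operator, so $\sigma_i(\omega^0)=0$ for $i<0$, and $\Omega^0=d\omega^0+\omega^0\wedge\omega^0$ is again multiplication with $\sigma_0(\Omega^0(X,Y))^a_b=R(X,Y)^a_b$.

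For part (ii) I would write $\nabla^1=\nabla^0+A$. Torsion-freeness forces $A_XY=A_YX$, and the Koszul identity for $g^1(X,Y)=g^0(X,PY)$, after cancelling the $g^0$-compatible piece of $\nabla^0$, collapses to
\begin{equation*}
2g^0(PA_XY,Z)=g^0(Y,(\nabla^0_XP)Z)+g^0(Z,(\nabla^0_YP)X)-g^0(X,(\nabla^0_ZP)Y),
\end{equation*}
where $(\nabla^0_XP)Z:=\nabla^0_X(PZ)-P\nabla^0_XZ$. The computational heart is $(\nabla^0_XP)=[\nabla^0_X,\Delta]$: writing $\Delta=-\ndg^2$ and using the mixed-variation curvature identity $[\nabla^0_X,\ndg]=R(X,\dg)$ gives
\begin{equation*}
(\nabla^0_X\Delta)Z=-2R(X,\dg)\ndg Z-(\ndg R(X,\dg))Z,
\end{equation*}
a first-order operator with principal symbol $-2i\xi\,R(X,\dg)$ (since $\ndg$ has principal symbol $i\xi$) and a zeroth-order remainder.

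I would then solve the displayed equation symbol by symbol. Since $(\nabla^0_XP)$ has order one less than $P$, dividing by $P$ shows $A_X$ has order $\le-1$, so $\sigma_0(\omega^1_X)=\sigma_0(\omega^0_X)=\omega^M_X$ and $\sigma_{-1}(\omega^1_X)=\sigma_{-1}(A_X)$. At order $-1$ the leading symbol $(1+\xi^2)^{-1}\sim|\xi|^{-2}$ of $P^{-1}$ combines with the $i\xi$ from the principal symbols of the three Koszul terms to produce the prefactor $i|\xi|^{-2}\xi$. The first term, using that $(\nabla^0_XP)$ is self-adjoint, contributes $R(X,\dg)$ directly; the remaining two terms, where the $\nabla^0$-derivative along $Y$ or $Z$ introduces a $\ndg$ acting on the argument, contribute the curvature terms $R(\cdot,\dg)X$ and $R(X,\cdot)\dg$, yielding the stated formula for $\frac{1}{i|\xi|^{-2}\xi}\sigma_{-1}(\omega^1_X)$.

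The hard part will be this last step: the bookkeeping of the $\pdo$ composition and of the inversion $P^{-1}$ at subleading order, and in particular the third Koszul term $g^0(X,(\nabla^0_ZP)Y)$, where $Z$ sits in the differentiation slot rather than as a $\pdo$-argument. Rewriting it as a $g^0$-pairing against $Z$ forces the $R(Z,\dg)\ndg Y$ contribution to be converted into the vector-valued term $R(X,\cdot)\dg$ via the pair symmetry $R_{kjba}=R_{bakj}$ together with the skew-symmetry of $R(U,V)$, and it is exactly here that the precise coefficients and signs of the three curvature terms in the final formula must be pinned down.
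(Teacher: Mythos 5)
Your strategy is the right one, and it is essentially the argument behind the statement: the paper does not prove Lemma \ref{MRT3lem} here but quotes it from \cite[Appendix A]{MRT3}, where the $s=1$ connection is likewise obtained from the six-term Koszul formula for $g^1(X,Y)=g^0(X,(1+\Delta)Y)$, the commutator $[\nabla^0_X,\Delta]$ is computed from $[\nabla^0_X,\ndg]=R(X,\dg)$, and the symbols are read off after multiplying by $(1+\Delta)^{-1}$. Part (i) and the first two Koszul terms in part (ii) go through exactly as you describe and give the contributions $-R(X,\dg)$ and $-\tfrac12 R(\cdot,\dg)X$ to $\frac{1}{i|\xi|^{-2}\xi}\sigma_{-1}(\omega^1_X)$.

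The one place where your sketch, executed literally, gives the wrong answer is the third Koszul term $g^0(X,(\nabla^0_ZP)Y)$. You propose to convert the piece $-2R(Z,\dg)\ndg Y$ into a pairing against $Z$ by curvature symmetries alone; that piece yields $-2R(X,\ndg Y)\dg$ and hence a contribution $+R(X,\cdot)\dg$, which is \emph{twice} the coefficient $+\tfrac12 R(X,\cdot)\dg$ in the statement. The missing input is that $(\nabla^0_ZP)Y=-2R(Z,\dg)\ndg Y-(\nabla_{\dg}R)(Z,\dg)Y-R(\ndg Z,\dg)Y-R(Z,\ndg\dg)Y$ contains the term $-R(\ndg Z,\dg)Y$, which is first order in $Z$ rather than in $Y$: rewriting $g^0(X,-R(\ndg Z,\dg)Y)$ as $g^0(W,Z)$ forces an integration by parts over $S^1$, and the resulting $-\ndg(R(Y,X)\dg)$ carries an additional first-order-in-$Y$ term $-R(\ndg Y,X)\dg=+R(X,\ndg Y)\dg$. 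The two pieces combine to $-R(X,\ndg Y)\dg$, with symbol $i\xi\,(-R(X,\cdot)\dg)$, and after the prefactor $-\tfrac12(1+\Delta)^{-1}$ this gives exactly $+\tfrac12 R(X,\cdot)\dg$. So the plan succeeds, but only once this integration-by-parts contribution is included; the pair symmetry and skew-symmetry of $R$ that you invoke are not sufficient by themselves to pin down the coefficient.
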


With this Lemma, the WCS forms become tractable.

\begin{prop} \label{2.4}  Let $\sigma$ be a permutation of $\{1,\ldots,2k-1\}.$  Then
\begin{eqnarray}\label{5.4}
\lefteqn{CS^W_{2k-1}(g)(X_1,...,X_{2k-1}) }\\
&=&
 \frac{k}{2^{k-1}}     
\sum_{\sigma} {\rm sgn}(\sigma) \int_{S^1}\tr [
(-2R(X_{\sigma(1)},\dg)
-R(\cdot,\dg)X_{\sigma(1)} + R(X_{\sigma(1)},\cdot)\dg) \nonumber\\
&&\qquad 
\cdot (\Omega^M)^{k-1}(X_{\sigma(2)},\ldots,X_{\sigma(2k-1)} )].\nonumber
\end{eqnarray}
\end{prop}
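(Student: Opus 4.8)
The plan is to unwind Definition \ref{def:WCS}(iii)--(ii) and then reduce the residue integrand to leading symbols using Lemma \ref{MRT3lem}. Writing $A = \omega^1 - \omega^0$ and, for $\omega_t = t\omega^0 + (1-t)\omega^1$, $\Omega_t = d\omega_t + \omega_t\wedge\omega_t$, the starting point is
\begin{equation*}
CS^W_{2k-1}(g)(X_1,\ldots,X_{2k-1}) = k\int_0^1\!dt\int_{S^*S^1}\tr\,\sigma_{-1}\bigl(A\wedge\Omega_t^{k-1}\bigr)(X_1,\ldots,X_{2k-1})\,d\xi\,d\theta.
\end{equation*}
First I would isolate the two structural facts that make this computable. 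By Lemma \ref{MRT3lem}(i)--(ii) the $L^2$ form $\omega^0=\omega^M$ is a multiplication operator and $\sigma_0(\omega^1)=\omega^M$, so $A$ has order $\le -1$ with leading symbol $\sigma_{-1}(A)=\sigma_{-1}(\omega^1)=i|\xi|^{-2}\xi\cdot\tfrac12\bigl(-2R(\cdot,\dg)-R(\cdot,\dg)\cdot+R(\cdot,\cdot)\dg\bigr)$. Since $\sigma_0(\omega_t)=\omega^M$ for every $t$, the interpolating curvature has $t$- and $\xi$-independent leading symbol $\sigma_0(\Omega_t)=\Omega^M$, the pullback of the base curvature.

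Next I would extract the order $-1$ part of the product symbol. Because $A$ has order $-1$, each factor $\Omega_t$ has order $0$, and the leading symbols $\sigma_0(\Omega_t)=\Omega^M$ carry no $\xi$-dependence, every correction term in the composition formula $\sigma(PQ)\sim\sum_{j\ge 0}\tfrac{1}{j!}\partial_\xi^j\sigma(P)\,D_\theta^j\sigma(Q)$ has order $\le -2$. Hence $\sigma_{-1}\bigl(A\wedge\Omega_t^{k-1}\bigr)=\sigma_{-1}(\omega^1)\wedge(\Omega^M)^{k-1}$, which is independent of $t$, so the $\int_0^1 dt$ collapses to a factor of $1$. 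It then remains to carry out the Wodzicki ($\xi$-)integration over $S^*S^1$, reducing the residue to an integral over $S^1$, and to expand the remaining wedge on the $X_i$.

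The final step, which I expect to be the main obstacle, is the combinatorial and constant bookkeeping. With the stated convention $\omega\wedge\eta = \frac{(k+\ell)!}{k!\ell!}{\rm Alt}(\omega\otimes\eta)$, evaluating $\sigma_{-1}(\omega^1)\wedge(\Omega^M)^{k-1}$ on $X_1,\ldots,X_{2k-1}$ feeds $X_{\sigma(1)}$ into the one-form factor and the remaining $2k-2$ vectors pairwise into the $k-1$ copies of $\Omega^M$, producing the antisymmetrized sum $\sum_\sigma\sgn(\sigma)$. I would then assemble the numerical factors --- the $\tfrac12$ in $\sigma_{-1}(\omega^1)$, the factor produced by the cosphere integration over $S^*S^1$, and the wedge normalizations for the $k-1$ curvature factors --- into the single prefactor $\frac{k}{2^{k-1}}$. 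The delicate points are pinning down the power $2^{k-1}$ exactly from the interplay of the wedge convention with the antisymmetrization over all of $\{1,\ldots,2k-1\}$, and confirming that no lower-order symbol survives the $\xi$-integration so that the leading-symbol computation indeed gives the whole answer.
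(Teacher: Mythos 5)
Your proposal follows essentially the same route as the paper's proof: reduce to leading symbols via Lemma \ref{MRT3lem} (so that $\sigma_{-1}(\omega^1-\omega^0)=\sigma_{-1}(\omega^1)$ and $\sigma_0(\Omega_t)=\Omega^M$ independently of $t$), drop the $t$-integral, pick up the factor of $2$ from the two-point fibers of $S^*S^1$ via the $|\xi|^{-2}\xi$ factor, and assemble the constants from the $\tfrac12$ in $\sigma_{-1}(\omega^1)$ and the wedge normalization. Your explicit appeal to the symbol composition formula to justify $\sigma_{-1}(A\wedge\Omega_t^{k-1})=\sigma_{-1}(\omega^1)\wedge(\Omega^M)^{k-1}$ is a correct (and slightly more careful) version of what the paper does implicitly in passing to (\ref{cswint}).
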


\begin{proof}  By Lemma~\ref{MRT3lem}(i), 
$\sigma_0((\omega_1-\omega_0)_X)=0.$
Thus 
\begin{equation}\label{cswint}
CS^W_{2k-1}(g) = k \int_0^1dt \int_{S^*S^1}\tr[\sigma_{-1}(\omega_1-\omega_0)\wedge (\sigma_0(\Omega_t))^{k-1}]\ d\xi dx.
\end{equation}
Moreover,
\begin{eqnarray*}\sigma_0(\Omega_t) &=& td(\sigma_0(\omega_0)) + (1-t)d(\sigma_0(\omega_1)) \\
&&\qquad + 
(t\sigma_0(\omega_0) + (1-t)\sigma_0(\omega_1))\wedge (t\sigma_0(\omega_0) + (1-t)\sigma_0(\omega_1))\\
&=& d\omega^M + \omega^M\wedge \omega^M\\
&=& \Omega^M.
\end{eqnarray*}
Therefore,
\begin{equation}\label{5.3}
CS^W_{2k-1}(g) = k \int_0^1 dt\int_{S^*S^1}\tr [\sigma_{-1}(\omega_1)
\wedge (\Omega^M)^{k-1}]\ d\xi dx,
\end{equation}
since $\sigma_{-1}(\omega_0) = 0.$  We can drop the integral over $t$. 
The integral over the $\xi$ variable contributes a factor of $2$: the integrand has
a factor of $|\xi|^{-2}\xi$, which equals $\pm 1$ on the two components of $S^*S^1$.
Since the fiber of $S^*S^1$ at a fixed $\theta$ consists of two points 
with opposite orientation, the ``integral" over each fiber is $1-(-1) = 2.$ 
Thus
\begin{eqnarray}\label{5.4a}  \lefteqn{
CS^W_{2k-1}(g)(X_1,...X_{2k-1})    }  \\
&=&   
\frac{2k}{2^{k-1}}    
\cdot\frac{1}{2} \sum_\sigma {\rm sgn}(\sigma)  \int_{S^1}\tr[
(-2R(X_{\sigma(1)},\dg)
-R(\cdot,\dg)X_{\sigma(1)} + R(X_{\sigma(1)},\cdot)\dg)\nonumber\\
&&\qquad
\cdot (\Omega^M)^{k-1}(X_{\sigma(2)},\ldots,X_{\sigma(2k-1)} )],\nonumber
\end{eqnarray}
by  Lemma~\ref{MRT3lem}(ii).
\end{proof}

This produces odd classes in the de Rham cohomology of the loop space of an odd
dimensional manifold.

\begin{thm}\label{thm:5.5}
 (i)  Let dim$(M) = 2k-1$.
 Then $\chk(\Omega) \equiv 0$ for any $\pdo_0^*$-connection $\nabla$ on
 $TLM.$  Thus the k${}^{\it th}$ Wodzicki-Chern-Simons form 
 $CS^W_{2k-1}(\nabla_1,\nabla_0)$ is closed and defines a 
 class $[CS^W_{2k-1}(\nabla_1,\nabla_0)]\in H^{2k-1}(LM).$  In particular, we can
 define $[CS^W_{2k-1}(g)]\in H^{2k-1}(LM)$ for a Riemannian metric $g$ on $M$.

(ii)  For dim$(M) = 2k-1$,  $CS^W_{2k-1}(g)$
simplifies to 
 \begin{eqnarray}\label{csg}
\lefteqn{CS^W_{2k-1}(g)(X_1,...,X_{2k-1}) }\\
&=&
\frac{k}{2^{k-2}} 
\sum_{\sigma} {\rm sgn}(\sigma) \int_{S^1}\tr[
 (R(X_{\sigma(1)},\cdot)\dg)
 (\Omega^M)^{k-1}(X_{\sigma(2)},\ldots,X_{\sigma(2k-1)} )].\nonumber
\end{eqnarray}

 \end{thm}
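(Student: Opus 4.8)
The plan is to read both parts off a single dimensional fact: the top finite-dimensional Chern character form $\tr[(\Omega^M)^k]$ is a $2k$-form on $M$, and since $\dim M = 2k-1 < 2k$ it vanishes identically, as do all of its contractions. Part (i) exploits that a $2k$-form cannot survive on a $(2k-1)$-manifold, and part (ii) exploits that the one ``excess'' term left over in Proposition \ref{2.4} is precisely such a contraction.

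For (i), I would first dispose of the $L^2$ connection: by Lemma \ref{MRT3lem}(i), $\Omega^0$ is a multiplication operator, so $\sigma_{-1}((\Omega^0)^i)=0$ and $\chk(\Omega^0)\equiv 0$. For a general $\pdo_0^*$-connection I would expand $\sigma(\Omega^k)$ by the symbol product formula; on $S^1$ the $\partial_\xi$-corrections to a product of degree-$0$ symbols drop out (a degree-$0$ symbol is $\xi$-independent on the one-dimensional cosphere), so cyclicity of the trace gives $\tr\sigma_{-1}(\Omega^k)=k\,\tr[\sigma_{-1}(\Omega)\,\sigma_0(\Omega)^{k-1}]$. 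Expressing $\sigma_0(\Omega)=R(\cdot,\cdot)$ and $\sigma_{-1}(\Omega)$ through the curvature of $M$, the integrand over $S^1$ becomes an alternating multilinear expression in the pointwise values $X_1(\theta),\ldots,X_{2k}(\theta)\in T_{\gamma(\theta)}M\cong\R^{2k-1}$. An alternating $2k$-linear form on a $(2k-1)$-dimensional space is zero, so $\chk(\Omega)\equiv 0$; with the transgression identity $\chk(\Omega_1)-\chk(\Omega_0)=dCS^W_{2k-1}$ this makes $CS^W_{2k-1}(\nabla_1,\nabla_0)$ closed and defines the asserted class.

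The main obstacle in (i) is that $\sigma_{-1}(\Omega)$ genuinely contains loop-derivative terms $\nabla_{\dg}X_i$ arising from the $d\omega$ part of the curvature (differentiating $\dg$ in an $LM$-direction produces $\nabla_{\dg}X$), which are not pointwise in $X_i(\theta)$ and so escape the naive count. I would remove these by integrating by parts over $S^1$ (the boundary term vanishes since $S^1$ is closed) and using the second Bianchi identity to transfer the derivative onto the curvature factors, after which every surviving term is again alternating in the $2k$ pointwise arguments in $T_{\gamma(\theta)}M$ and vanishes by dimension.

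For (ii), which is the crux, I would start from Proposition \ref{2.4} and simplify the bracket $-2R(X_{\sigma(1)},\dg)-R(\cdot,\dg)X_{\sigma(1)}+R(X_{\sigma(1)},\cdot)\dg$ via the first Bianchi identity. Putting $Z=\cdot$ in $R(X,\dg)Z+R(\dg,Z)X+R(Z,X)\dg=0$ gives the operator identity $R(X,\dg)=R(\cdot,\dg)X+R(X,\cdot)\dg$; substituting $R(\cdot,\dg)X_{\sigma(1)}=R(X_{\sigma(1)},\dg)-R(X_{\sigma(1)},\cdot)\dg$ collapses the bracket to $-3R(X_{\sigma(1)},\dg)+2R(X_{\sigma(1)},\cdot)\dg$. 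The first piece contributes $\sum_\sigma\sgn(\sigma)\ints\tr[R(X_{\sigma(1)},\dg)\,(\Omega^M)^{k-1}(X_{\sigma(2)},\ldots,X_{\sigma(2k-1)})]$, which is, up to a constant, the contraction $\iota_{\dg}\tr[(\Omega^M)^k]$ of the $2k$-form $\tr[(\Omega^M)^k]$ on the $(2k-1)$-manifold $M$; that form is identically zero, so this contribution vanishes. Only $2R(X_{\sigma(1)},\cdot)\dg$ survives, doubling the coefficient $k/2^{k-1}$ to $k/2^{k-2}$ and reproducing exactly (\ref{csg}). The decisive step is recognizing the $R(X,\dg)$ term as a contraction of the vanishing top Chern character form; this is where $\dim M=2k-1$ is used, and it is also why the same simplification fails in other dimensions.
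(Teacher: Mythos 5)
Your part (ii) is the paper's own argument in a slightly reshuffled order: the paper labels the three terms of the bracket in Proposition \ref{2.4} as (I) $=-2R(X_{\sigma(1)},\dg)$, (II) $=-R(\cdot,\dg)X_{\sigma(1)}$, (III) $=R(X_{\sigma(1)},\cdot)\dg$, kills (I) via the identification $\tr[R(X_{1},\dg)(\Omega^M)^{k-1}(\cdots)]=[i_{\dg}\tr((\Omega^M)^{k})](X_1,\ldots,X_{2k-1})$ (a $2k$-form on a $(2k-1)$-manifold vanishes), and uses the first Bianchi identity to turn (II) into a type-(I) term plus (III), leaving $2$(III); your consolidation of the bracket to $-3R(X_{\sigma(1)},\dg)+2R(X_{\sigma(1)},\cdot)\dg$ is the identical computation and gives the same doubling of $k/2^{k-1}$ to $k/2^{k-2}$. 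For (i) the paper's proof is a one-liner: $\chk(\Omega)(X_1,\ldots,X_{2k})(\gamma)$ is the integral around the loop of a pointwise alternating $2k$-linear expression in the $X_i(\gamma(\theta))\in T_{\gamma(\theta)}M$, hence vanishes for dimension reasons --- which is your core argument too.

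The one place you genuinely diverge is the ``obstacle'' you raise in (i) and your proposed cure. The worry is legitimate to raise, but the resolution is not an integration by parts: moving $\nabla_{\dg}$ off one $X_i$ by parts only deposits it on the other $X_j$'s (equally non-pointwise) or on the curvature factors, and the second Bianchi identity constrains derivatives of $R$, not of vector fields, so the mechanism you describe would not eliminate such terms if they were present. What actually saves the day is that they are not present: the curvature two-form is tensorial, and its symbols --- computed explicitly in \cite{MRT3}, Lemma A.2, for the $s=1$ connection --- depend only on $R$, $\nabla R$, $\dg$ and the pointwise values $X_i(\gamma(\theta))$; the $\nabla_{\dg}X$ contributions visible in $d\omega$ cancel in the full expression $d\omega+\omega\wedge\omega$. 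So your detour addresses a non-problem by a method that would not work; since the rest of the argument is exactly the paper's, the conclusion is unaffected.
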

 
 \begin{proof} (i) Let $\Omega$ be the curvature of $\nabla.$
 $\chk(\Omega)(X_1,\dots, X_{2k})(\gamma)$ is an integral of the pointwise expression which is
a $2k$-form on $M$, and hence vanishes.

 (ii) Denote the three terms on the right hand side of the second line of (\ref{5.4}) by (I), (II), (III), resp. Since
 \begin{eqnarray}\label{insert3} \tr[R(X_{1},\dg)
\cdot (\Omega^M)^{k-1}(X_{2},..X_{2k-1})] &=& 
[i_{\dg}\tr(\Omega^{k})](X_1,...X_{2k-1}) \\
&=& \tr(\Omega^k)(\dg, X_1,\ldots,X_{2k-1}),\nonumber
\end{eqnarray}
(I) vanishes on a $(2k-1)$-manifold.  
By the Bianchi identity,  
(II) equals  
$$[R(\dot\gamma, X_{\sigma(1)})\cdot + R(X_{\sigma(1)}, \cdot)\dg]
(\Omega^M)^{k-1}(X_{\sigma(2)},..X_{\sigma(2k-1)} ).$$
   The first term is of type (I), so its contribution vanishes, and the second term equals (III).  Thus the right hand side of (\ref{5.4}) simplifies to 2(III).   

\end{proof}

\begin{rem}\label{nonclosed} (i) The argument in Thm.~\ref{thm:5.5}(i) fails for the general invariant polynomials in 
Defn.~\ref{def:WCS}(iv).  For expressions like $c_{k_1}^W(\Omega)\wedge\ldots\wedge c^W_{k_r}(\Omega)
(X_1,\ldots, X_{2k}), r >1,$ are not the integral of a $2k$-form around a loop, but are products of integrals of lower degree forms around this  loop.  In particular, we do not construct residue Chern-Simons  classes associated to Chern classes, as these are polynomials in the components of the Chern character and so involve wedging of forms.

(ii)  There are several variants to the construction of relative WCS classes.

(a) If we define the transgression form $Tc_k(\nabla)$ with the Wodzicki residue
replacing the trace in (\ref{eq:ChernSimons}), it is easy to check that $Tc_k(\nabla)$
involves $\sigma_{-1}(\Omega).$  For $\nabla$ the $L^2$ connection, this WCS class vanishes.  For $\nabla$ the Levi-Civita connection on $LM$ for the Sobolev $s$-metric, $s>1/2$, $\sigma_{-1}(\Omega)$ involves 
the covariant derivative of the curvature of $M$ (cf. \cite[Lemma A.2]{MRT3} for $s=1.$)  Thus the
relative WCS class is easier for computations than the absolute class $[Tc_k(\nabla)].$

(b) If we define $CS_k^W(g)$ using the Levi-Civita connection for the Sobolev $s$-metric instead of
the $s=1$ metric, the WCS class is simply multiplied by the artificial parameter $s$ by 
\cite[Lemma A.3]{MRT3}.
 Therefore setting $s=1$ is not only computationally convenient, it 
regularizes the WCS, in that it extracts the $s$-independent information.
This justifies the following definition:

\begin{defn}  \label{def:regularized}
The {\it regularized  $k^{th}$ WCS class} associated to a Riemannian metric 
$g$ on $M$ is $CS_k^{W, {\rm reg}}(g) \stackrel{\rm def}{=} 
CS_k^W(\nabla^{1},\nabla^0)$, where $\nabla^{1}$ is the Levi-Civita  connection for the Sobolev
$s=1$ metric,
and $\nabla^0$ is the $L^2$ Levi-Civita connection.  
\end{defn}  

\end{rem}

\bigskip

We conclude this section with a vanishing result that has no finite dimensional analogue.

 \begin{prop} \label{WCSvan} 
  The  WCS form $CS_{3}^W(g)$
   vanishes.
  \end{prop}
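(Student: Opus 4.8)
The plan is to begin from the explicit formula of Theorem~\ref{thm:5.5}(ii) in the lowest relevant dimension, namely $k=2$ and $\dim M=3$ (the case in which $CS_3^W(g)$ is closed and defines a class). There $(\Omega^M)^{k-1}=\Omega^M=R(\cdot,\cdot)$ and the constant is $k/2^{k-2}=2$, so
\begin{equation*}
CS^W_3(g)(X_1,X_2,X_3)=2\sum_{\sigma}\sgn(\sigma)\int_{S^1}\tr\!\left[\bigl(R(X_{\sigma(1)},\cdot)\dg\bigr)R(X_{\sigma(2)},X_{\sigma(3)})\right],
\end{equation*}
the sum being over permutations $\sigma$ of $\{1,2,3\}$. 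Since the $X_i$ are evaluated pointwise along $\gamma$, it suffices to prove that for every point $p\in M$, every $v=\dg\in T_pM$, and all $X,Y,Z\in T_pM$ the totally antisymmetrized integrand vanishes; that is, writing $F(X,Y,Z)=\tr[(R(X,\cdot)v)R(Y,Z)]$, it suffices to show ${\rm Alt}_{XYZ}F=0$ at each point.

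Next I would record the structural reduction. The factor $R(Y,Z)$ is skew in $(Y,Z)$, so ${\rm Alt}_{XYZ}F$ is an alternating trilinear form on $T_pM$; as $\dim M=3$ the space of such forms is one-dimensional, spanned by $\dvol_p$. Hence ${\rm Alt}_{XYZ}F=\lambda(v)\,\dvol_p$ with $\lambda$ linear in $v$, i.e. $\lambda(v)=\langle\mu,v\rangle$ for a covector $\mu$ that is built pointwise, algebraically, and quadratically from the curvature tensor $R$. The problem is thereby reduced to showing $\mu=0$.

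To evaluate $\mu$ I would fix an oriented orthonormal frame and insert the three-dimensional curvature identity (equivalently, the vanishing of the Weyl tensor)
\begin{equation*}
R_{ijkl}=\ric_{ik}\delta_{jl}-\ric_{il}\delta_{jk}-\ric_{jk}\delta_{il}+\ric_{jl}\delta_{ik}-\frac{S}{2}\bigl(\delta_{ik}\delta_{jl}-\delta_{il}\delta_{jk}\bigr)
\end{equation*}
(with $S$ the scalar curvature) into both curvature factors of $F$, and then perform the contraction dictated by the antisymmetrization, which supplies one Levi-Civita symbol $\epsilon$. The decisive observation, which I would highlight, is that after this substitution every surviving term contracts the antisymmetric $\epsilon$ against a symmetric object --- $\ric$, its square $\ric^2$, or $S\,g$ --- on a symmetric pair of indices, and therefore vanishes. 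The precise numerical constants in the identity above are immaterial: only this symmetric-against-$\epsilon$ structure matters. Summing the (identically zero) contributions gives $\mu=0$, hence $CS^W_3(g)\equiv 0$.

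The main obstacle is entirely organizational: carefully enumerating the index contractions produced by substituting the curvature identity into $F$ and confirming that each collapses to a symmetric-times-antisymmetric pairing. As a sanity check and partial shortcut I would use the first Bianchi identity in the form $R(X,\cdot)v=R(v,\cdot)X-R(v,X)$. The endomorphism $R(v,X)$ contributes, after antisymmetrization, a multiple of $\tr(\Omega^2)(v,X,Y,Z)$ --- a $4$-form, automatically zero on the $3$-manifold $M$ (this is exactly the vanishing of term (I) in the proof of Theorem~\ref{thm:5.5}) --- while the residual term $R(v,\cdot)X$ has the same shape as $F$ and is disposed of by the same Ricci-substitution argument. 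Either route yields the asserted vanishing of the form $CS^W_3(g)$.
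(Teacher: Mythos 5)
Your argument is correct, but it takes a genuinely different route from the paper's. The paper fixes an orthonormal frame with $\dg=e_1$, expands the $k=2$ permutation sum into $2[R_{1s1t}R_{23ts}+R_{2s1t}R_{31ts}+R_{3s1t}R_{12ts}]$, and kills everything by pairing summands via the elementary symmetries $R_{ijkl}=-R_{jikl}=R_{klij}$; dimension $3$ enters only through the range of the indices. You instead use the dimension structurally: the antisymmetrization lands in the one-dimensional space $\Lambda^3T_p^*M$, and the vanishing of the Weyl tensor in dimension $3$ lets you write $R_{ijkl}$ as a sum of terms $g_{i\cdot}A_{j\cdot}$ with $A$ symmetric, after which (as I verified) every contraction pairs the Levi-Civita symbol against $\ric$, $\ric^2$, or $g$ on a symmetric index pair. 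One caution: the cancellation only becomes visible after substituting the decomposition into \emph{both} curvature factors --- a single substitution leaves terms such as $\epsilon^{abc}R_{asmb}\ric_{cs}$ whose vanishing is not yet apparent --- so the ``organizational'' step you defer is where the proof actually lives and should be written out. Also, your Bianchi shortcut (splitting off $R(\dg,X)$ as contributing $i_{\dg}\tr(\Omega^2)$, a $4$-form on a $3$-manifold) reproduces the reduction already performed in the proof of Theorem \ref{thm:5.5}(ii), so it does not save work. What your route buys is a conceptual explanation of the vanishing (conformal flatness of $3$-manifolds); what the paper's buys is a short, convention-independent computation that needs no curvature decomposition.
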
 
  
 \begin{proof}  Fix a loop $\gamma$ and a parameter $\theta$, and let $\{e_r\}$ be an orthonormal frame at $\gamma(\theta).$   Since the integrand at $\theta$ vanishes if $\dg(\theta)=0$, we can assume that $\dg(\theta)$ is a multiple of $e_1.$  For simplicity, we assume $\dg(\theta) = e_1.$ 

 We may assume that $X_r = e_r$ in (\ref{csg}). A typical term in (\ref{csg}) is 
\begin{eqnarray*} I_\sigma &=&
 \sgn(\sigma) R(e_{\sigma(1)}, e_{r_1}, e_{1}, e_{r_k})
R(e_{\sigma(2)}, e_{\sigma(3)}, e_{r_2}, e_{r_1})
R(e_{\sigma(4)}, e_{\sigma(5)}, e_{r_3}, e_{r_2})   \\
&&\qquad \cdot\ldots
R(e_{\sigma(2k-2)}, e_{\sigma(2k-1)}, e_{r_k}, e_{r_{k-1}})\\
&=& \sgn(\sigma)R_{\sigma(1) r_1 1 r_k}R_{\sigma(2)\sigma(3) r_2r_1}R_{\sigma(4)\sigma(5) r_3r_2}
\cdot\ldots\cdot R_{\sigma(2k-2)\sigma(2k-1)r_kr_{k-1}}
\end{eqnarray*}  

For $k=2$, we have
\begin{eqnarray*} \frac{1}{2} CS^W_3(g)(e_1, e_2, e_3) = \sum_\sigma I_\sigma &=& \sum_\sigma \sgn(\sigma)R_{\sigma(1) s 1 t }R_{\sigma(2)\sigma(3) ts}\\
&=& R_{1s1t}R_{23ts} - R_{2s1t}R_{13ts} - R_{3s1t}R_{21ts}\\
&&\quad -R_{1s1t}R_{32ts}+ R_{2s1t}R_{31ts} +R_{3s1t}R_{12ts}\\
&=& 2[R_{1s1t}R_{23ts} + R_{2s1t}R_{31ts} +R_{3s1t}R_{12ts}]\\
&=& 2[(a)+(b)+(c)].
\end{eqnarray*}
Then 
\begin{eqnarray*}(a) &=& R_{1213}R_{2332}+R_{1312}R_{2323} = R_{1213}R_{2332}-R_{1213}R_{2332} = 0,\\
(b) &=& R_{2112}R_{3121}+ R_{2113}R_{3131} + R_{2312}R_{3123} = (\alpha_1)+(\beta_1)+(\gamma_1),\\
(c) &=& R_{3112}R_{1221}+R_{3113}R_{1231}+R_{3213}R_{1232}= (\alpha_2)+(\beta_2)+(\gamma_2).\end{eqnarray*}
Since
\begin{eqnarray*}(\alpha_1)+(\alpha_2) &=& R_{2112}R_{3121} +R_{3112}R_{1221} = 
R_{2112}R_{3121} - R_{2112}R_{3121} = 0,\\
(\beta_1) +(\beta_2)&=& R_{2113}R_{3131} + R_{3113}R_{1231} = R_{3131}R_{2113}-R_{3131}R_{2113} = 0,\\
(\gamma_1)+(\gamma_2) &=&R_{2312}R_{3123}+R_{3213}R_{1232} =
R_{2312}R_{3123}- R_{2312}R_{3123} = 0,
\end{eqnarray*}
we obtain $CS^W_3(g) = 0.$    
\end{proof}

\begin{rem}  Theorem \ref{WCSvan} highlights the difference between finite dimensional CS classes on $M$ and WCS classes on $LM$.
Let dim$(M)=3.$ 
The only invariant monomials of degree two involving the ordinary trace are $\tr(A_1A_2)$ and 
$\tr(A_1)\tr(A_2)$ (corresponding to $ch_{[2]}$ and $c_1^2$, respectively).  

$\tr(A_1A_2)$ gives rise 
to the classical Chern-Simons invariant for $M$. The Chern-Simons class associated to 
$\tr(A_1)\tr(A_2)$ is trivial: this form involves $\tr(\omega_1-\omega_0)\tr(\Omega_t)$, 
which vanishes since  $\omega_1-\omega_0, \Omega_t$ take values in skew-symmetric endomorphisms.

On $LM$, we know that the WCS class $CS^W_3$ associated to 
$\tr(A_1A_2)$ vanishes.  The WCS form associated to $\tr(A_1)\tr(A_2)$ involves 
$\tr\sigma_{-1}(\omega_1-\omega_0) = \tr\sigma_{-1}(\omega_1)$ and $\tr\sigma_{-1}(\Omega_t).$  
Both $\omega_1$ and $ \Omega_t$ take values in skew-symmetric $\pdo$s, but 
it does not follow that these  lower order terms in their symbol expansions are skew-symmetric.  In fact, a calculation using \cite[Lemma A.1]{MRT3} shows that $\sigma_{-1}(\omega_1)$ is not skew-symmetric. 
Thus the WCS form associated to $\tr(A_1)\tr(A_2)$ may be nonzero.  
However, by Remark \ref{nonclosed}(i),
this WCS form may not be closed.  


\end{rem}

\section{{\bf 
WCS Classes and Diffeomorphism Groups of Sasakian $5$-Manifolds }}
\label{dimfive}

In this section we produce several infinite families of nonhomeomorphic 
$5$-manifolds $\overline M$ with
$\pi_1(\diff(\overline M)) =\pi_1(\diff(\overline M), \tid)$ infinite.  These manifolds are the total space of circle bundles over integral 
K\"ahler surfaces. To give some context, although there are many results about diffeomorphism groups of manifolds of dimension less than four and a few results in dimension four, the best results in higher dimensions
work only in the stable range.
 In particular, $\pi_1(\diff(S^5))$ is not well understood; we will see that this is a surprisingly difficult case from our viewpoint.  

The fundamental observation is that a circle action $a: S^1\times \overline M\to\overline M$ on a closed, oriented  manifold 
$\overline M$ can be thought of both as a map $a^D: S^1\to \diff(\overline M)$ and as a map $a_L: M\to {\rm Maps}(S^1, \overline M) = L\overline M.$
The first interpretation gives an element $[a^D]\in \pi_1(\diff(\overline M))$, and the second gives an element
$[a^L] = a^L_*[\overline M]\in H_*(L\overline M).$  As we will prove, the nontriviality of $[a^D]$ is guaranteed if
$\int_{[a^L]} CS^W_5\neq 0$.

The $5$-manifolds we consider come with circle actions given by rotating the fibers. 
The calculation of $\int_{[a^L]} CS^W_5$ is feasible only for special metrics.
Fortunately, circle bundles over K\"ahler manifolds come with Sasakian structures, and in particular with specific metrics related to the K\"ahler form.  For these metrics, we can estimate the integral and often show it is nonzero.  The main result (Thm.~\ref{bigthm}) is that for an integral K\"ahler surface $(M,\omega)$, $\pi_1(\diff(\overline M_k))$ is infinite for $k\in \Z, |k|\gg 0$, where $\overline M_k$ is the circle bundle associated to $k\omega.$  

In \S3.1 we discuss the relationship between $\pi_1(\diff(\overline M))$ and $L\overline M$.  The main result is proven is \S3.2, and examples are given in \S3.3.
\medskip

\noindent {\bf Notation:} In this section, $\pi_1(\diff(N)) = \pi_1(\diff(N),\tid)$ for a manifold $N$.


\subsection{Circle actions}

Recall that  $H^*(LM)$ denotes de Rham cohomology of complex valued 
forms.   In particular, integration of closed forms over homology cycles gives a pairing of
$H^*(LM)$ and $H_*(LM,\BbC)$. 

 For a closed, oriented manifold $M$, let $a_0,a_1:S^1\times M\to M$ be two smooth actions.  

\begin{defn}\label{defsmooth} (i)  $a_0$ and $a_1$
 are {\it smoothly  homotopic} if there exists a smooth map 
$$F:[0,1]\times S^1\times M\to M,\ F(0,\theta,m) = a_0(\theta,m),\
 F(1,\theta,m) = a_1(\theta,m).$$

(ii)  $a_0$ and $a_1$ are {\it smoothly  homotopic through actions} if
 $F(t,\cdot,\cdot):S^1\times M\to M$ is an action for all $t$.

\end{defn}

An action $a$ can be rewritten in two equivalent ways.

\begin{itemize}
\item $a$ determines (and is determined by) 
$a^D:S^1\to \diff(M)$ given by
$a^D(\theta)(m) = a(\theta,m).$  
Since $a^D(0) = {\rm Id},$  we get a class $[a^D]\in \pi_1(\diff(M), {\rm Id})$.
Here Diff$(M)$ is a Hilbert manifold
as an open subset of the Hilbert manifold of $\maps(M) = \maps(M,M)$ of some fixed high Sobolev class; one can also work with  $\diffm$ as a Fr\'echet manifold, since the homotopy type 
of $\diffm$ is independent of the topology by Appendix B.  In either topology, it is 
straightforward to check that $a$ is smooth iff $a^D$ is smooth.

\item $a$ determines (and is determined by)
$a^L:M\to LM$ given by $ a^L(m)(\theta) = a(\theta,m)$. This determines a class
$[a^L] = a^L_*[M]\in H_n(LM,\Z)$ with $n = {\rm dim}(M)$.  Again, $a$ is smooth iff $a^L$ is smooth in either topology.

\end{itemize}

In general, we have a sequence of maps
\begin{eqnarray*}\pi_1(\Diff(M)) &=& \maps(S^1, \Diff(M))/{\sim} 
\stackrel{\alpha}{\to} \maps(S^1\times M, M)/{\sim}\\
&=& \maps (M, LM)/{\sim} \stackrel{\beta}{\to} H_n(LM,\Z),
\end{eqnarray*}
where $\sim$ indicates modulo homotopies, taken to be smooth in the sense of Def.~\ref{defsmooth}.
Here $\alpha(f)(\theta, m) = f(\theta)(m)$, and
$\beta(g) = g_*[M].$  
$$ \alpha \mapsto \alpha' \mapsto \alpha'_*[M].$$

Set $\beta\alpha = \eta.$

\begin{prop} \label{prop:insert}$\eta:\pi_1(\diff(M))\to H_n(LM,\Z)$ is a homomorphism, and 
$\eta([a^D]) = [a^L].$
\end{prop}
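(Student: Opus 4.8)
The identity $\eta([a^D])=[a^L]$ is immediate from the definitions: $\alpha(a^D)(\theta,m)=a^D(\theta)(m)=a(\theta,m)$, so as a map $M\to LM$ we have $\alpha(a^D)=a^L$, and therefore $\eta([a^D])=\beta(a^L)=a^L_*[M]=[a^L]$. For the homomorphism assertion, abbreviate $\sigma_f:=\alpha(f):M\to LM$ for a loop $f:S^1\to\diff(M)$ based at $\tid$; thus $\sigma_f(m)(\theta)=f(\theta)(m)$, and since $f(0)=\tid$ the loop $\sigma_f(m)$ is based at $m$. Hence $\sigma_f$ is a section of the evaluation fibration $\mathrm{ev}_0:LM\to M$, $\mathrm{ev}_0(\gamma)=\gamma(0)$, and $\eta([f])=(\sigma_f)_*[M]$.

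The first thing the proof must get right is the group structure on the target. The identity of $\pi_1(\diff(M))$ is the constant loop at $\tid$, and $\sigma_{\tid}=\iota$ is the inclusion of constant loops; since $\mathrm{ev}_0\circ\iota=\tid_M$, the map $\iota_*$ is injective, so $\eta(\tid)=\iota_*[M]\neq 0$. Consequently $\eta$ is \emph{not} a homomorphism for the additive structure on $H_n(LM,\Z)$, and an argument that only lands additivity modulo $\iota_*[M]$ is in fact measuring exactly this defect. The structure for which $\eta$ is genuinely a homomorphism is the Chas--Sullivan loop product $\bullet:H_n(LM)\otimes H_n(LM)\to H_n(LM)$ (well defined because $\mathrm{ev}_0$ has finite-dimensional base, so the governing intersection has finite codimension $n$), whose two-sided unit is precisely $\iota_*[M]$. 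The plan is to prove $\eta([f][g])=\eta([f])\bullet\eta([g])$ for this product, so that $\eta:(\pi_1(\diff(M)),\cdot)\to(H_n(LM,\Z),\bullet)$ is a homomorphism sending the identity $\tid$ to the unit.

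To carry this out, I would represent $[f][g]$ by the concatenation $f\ast g$. Unwinding $\alpha$ shows that $\sigma_{f\ast g}(m)$ is the fibrewise concatenation $\sigma_f(m)\ast\sigma_g(m)$, which is defined precisely because both factors are based at $m$. On the other side, the loop product of $(\sigma_f)_*[M]$ and $(\sigma_g)_*[M]$ is computed by the Thom--Pontryagin recipe: intersect $\sigma_f(M)\times\sigma_g(M)$ with $(\mathrm{ev}_0\times\mathrm{ev}_0)^{-1}(\Delta_M)$, where $\Delta_M\subset M\times M$ is the diagonal, and then apply the concatenation map $c:LM\times_M LM\to LM$. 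Because $\sigma_f,\sigma_g$ are sections, this intersection is the clean diagonal family $\{(\sigma_f(m),\sigma_g(m)):m\in M\}\cong M$, of the expected dimension $2n-n=n$, and $c$ carries it to $\sigma_{f\ast g}(M)$. Thus $\eta([f])\bullet\eta([g])=[\sigma_{f\ast g}(M)]=\eta([f][g])$, which together with the unit computation gives the homomorphism.

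The main obstacle is to make this intersection step rigorous on the Hilbert-manifold cycles $\sigma_f(M)$: I must verify that $\sigma_f(M)\times\sigma_g(M)$ meets $(\mathrm{ev}_0\times\mathrm{ev}_0)^{-1}(\Delta_M)$ transversally, fix orientations so that the Thom collapse produces $+[\sigma_{f\ast g}(M)]$ with no stray sign, and confirm that the reparametrisation freedom in $c$ does not alter the class. I would obtain the transversality and the orientation bookkeeping from the splitting of $\mathrm{ev}_0$ by $\iota$, which identifies the normal data of the intersection with $TM$ pulled back and forces the two sections to be transverse. Finally I would record the consequence used in \S\ref{dimfive}: applying the easy identity to the reparametrised action $a^k(\theta,m)=a(k\theta,m)$ gives $\eta(k[a^D])=[(a^k)^L]$, and the linearity of $CS^W_{2k-1}$ in $\dg$ yields $\int_{[(a^k)^L]}CS^W_5=k\int_{[a^L]}CS^W_5$, which is what drives the application.
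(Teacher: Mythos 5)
You are right to break from the paper here, and your central objection is correct: $\eta$ is \emph{not} a homomorphism into the additive group $H_n(LM,\mathbb{Z})$. The constant loop at $\tid$ is the identity of $\pi_1(\diff(M))$ and maps to the class $\iota_*[M]$ of constant loops, which is nonzero since $(\mathrm{ev}_0)_*\iota_*[M]=[M]$; more vividly, for $M=S^1$ with $a$ the rotation action, $\eta(n[a^D])$ lies in the degree-$n$ component of $LS^1$, so $\eta(2[a^D])\neq 2\eta([a^D])$. The paper's own proof spends essentially all of its effort on well-definedness (smoothly homotopic actions give homotopic $a^L$'s, with Lemma \ref{lem:one} and Whitney approximation reconciling continuous and smooth homotopies) and then dismisses the homomorphism property with the single sentence ``as in the setup of the Hurewicz theorem, it is elementary'' --- an analogy that does not apply, because the group operation concatenates only in the $S^1$ direction and there is no pinch map on the fundamental class of $M$. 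Your repair --- that $\eta$ is a unital homomorphism into $H_n(LM,\mathbb{Z})$ equipped with the Chas--Sullivan loop product, whose unit is $\iota_*[M]$ --- is the correct statement, and your key observation makes the string-topology step genuinely unproblematic: since $\mathrm{ev}_0\circ\sigma_f=\mathrm{id}_M$, the composite $(\mathrm{ev}_0\times\mathrm{ev}_0)\circ(\sigma_f\times\sigma_g)$ is the identity of $M\times M$, hence transverse to the diagonal, the intersection is exactly the diagonal copy of $M$, and fibrewise concatenation carries it to $\sigma_{f\ast g}(M)$, i.e.\ to $\eta([f][g])$.

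Two caveats. First, your write-up defers the transversality, orientation, and reparametrization checks (``I would obtain \ldots''); given the section property these are routine, but a finished proof should either execute them or cite the standard construction of the loop product and verify only the section transversality, which you already have. You also skip well-definedness of $\eta$ on smooth homotopy classes, which is the bulk of the paper's argument; one line suffices ($f\simeq f'$ gives $\sigma_f\simeq\sigma_{f'}$, hence $\sigma_{f,*}[M]=\sigma_{f',*}[M]$), but it should be said, together with the smooth-versus-continuous point the paper handles via approximation. Second, and reassuringly for the paper: the faulty additive claim is never used downstream. Proposition \ref{prop:two} needs only well-definedness, the identity $\eta([a^D])=[a^L]$, and the direct computation $\int_{[a_n^L]}CS^W_{2k-1}=n\int_{[a^L]}CS^W_{2k-1}$ --- exactly the reparametrization identity you record at the end --- so your correction repairs the statement without disturbing any of the applications to $\pi_1(\diff(\overline M))$.
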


\begin{proof}  
We claim that $a_0$ is smoothly homotopic to $a_1$ iff $a^L_0,
a^L_1:M\to LM$ are smoothly homotopic. In one direction, 
let $F$ be the homotopy from $a_0$ to $a_1$.  Set
	  $H:[0,1]\times M \to LM $ by $H(t,m)(\theta) = F(t,\theta,m).$  Then
$H(0,m)(\theta) = F(0,\theta,m) = a_0(\theta,m) =  a^L_0(m)(\theta)$,
  $H(1,m)(\theta) =  a^L_1(m)(\theta),$  so $H$ is a homotopy from $
  a^L_0$ to $ a^L_1.$  
It is  easy to check that $H$ is smooth.

Conversely, if $H:[0,1]\times M \to LM $ is a smooth homotopy from $a^L_0$ to
$a^L_1$, set $F(t,\theta, m) = H(t,m)(\theta).$  

From the claim, it follows that 
if $a_0$ is smoothly homotopic to $a_1$, then
$[a^L_0] =  [a^L_1]\in H_n(LM,\Z).$  For
$a^L_0$ and $a^L_1$ are homotopic, so
$[a^L_0] = a^L_{0,*}[M] = a^L_{1,*}[M] = [a^L_1].$

This shows that $\eta$ is well defined.
As in the setup of the Hurewicz theorem, it is elementary that $\eta$ is a homomorphism.  
\end{proof}

\begin{cor}\label{cor:one} If $a_0$ is smoothly homotopic to $a_1$, then
$[a^L_0] =  [a^L_1]\in H_n(LM,\Z).$  
\end{cor}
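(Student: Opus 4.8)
The plan is to bypass $\pi_1(\diff(M))$ entirely and argue directly with the maps $a^L_0, a^L_1\colon M\to LM$. A smooth homotopy of actions in the sense of Definition~\ref{defsmooth}(i) need \emph{not} pass through diffeomorphisms, so it need not produce a homotopy of loops in $\diff(M)$; hence we cannot route the corollary through the homomorphism $\eta$ and the equality $[a^D_0]=[a^D_1]$. Instead I would exploit that the associated maps $a^L_0,a^L_1\colon M\to LM$ are genuine maps of manifolds (with no diffeomorphism constraint), together with the homotopy invariance of singular homology. In fact this is exactly the well-definedness argument already carried out in the final paragraphs of the proof of Proposition~\ref{prop:insert}, so the corollary is a restatement of what was shown there, and I would simply isolate that argument.

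First I would unwind the hypothesis. By Definition~\ref{defsmooth}(i) a smooth homotopy from $a_0$ to $a_1$ is a smooth map $F\colon[0,1]\times S^1\times M\to M$ with $F(0,\theta,m)=a_0(\theta,m)$ and $F(1,\theta,m)=a_1(\theta,m)$. Using the adjunction between maps out of a product and maps into a loop space, I would set $H\colon[0,1]\times M\to LM$, $H(t,m)(\theta)=F(t,\theta,m)$, so that $H(0,\cdot)=a^L_0$ and $H(1,\cdot)=a^L_1$ by construction. The only nontrivial point — and the step I expect to be the main obstacle — is to verify that $H$ is a bona fide (continuous, in fact smooth) map into the Hilbert manifold $LM=L^{s'}M$. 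This is the exponential law for the Sobolev mapping-space topology: one must show that joint smoothness of $F$ in $(t,\theta,m)$ implies that the adjoint $(t,m)\mapsto F(t,\cdot,m)$ is smooth into the loop space, which requires controlling $\theta$-derivatives up to the Sobolev order $s'$ and the usual estimates making the loop-space charts smooth. For the present homology statement only continuity of $H$ is strictly needed, but I would record the smoothness verification carefully, since it is precisely what is invoked (and merely asserted with ``It is easy to check that $H$ is smooth'') in the proof of Proposition~\ref{prop:insert}.

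Granting this, the conclusion is immediate from functoriality and homotopy invariance of singular homology. The map $H$ exhibits $a^L_0$ and $a^L_1$ as homotopic, so they induce the same homomorphism $a^L_{0,*}=a^L_{1,*}\colon H_n(M,\Z)\to H_n(LM,\Z)$. Applying these to the fundamental class $[M]$ gives $[a^L_0]=a^L_{0,*}[M]=a^L_{1,*}[M]=[a^L_1]$ in $H_n(LM,\Z)$, which is exactly the assertion. Thus the entire force of the corollary is concentrated in the adjunction step converting the homotopy $F$ of actions into the homotopy $H$ of maps $M\to LM$, after which standard homology functoriality finishes the proof.
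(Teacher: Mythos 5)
Your proposal is correct and follows essentially the same route as the paper: the paper's proof of this corollary simply cites the claim established inside the proof of Proposition~\ref{prop:insert}, namely that the adjoint $H(t,m)(\theta)=F(t,\theta,m)$ is a (smooth) homotopy from $a^L_0$ to $a^L_1$, and then concludes $[a^L_0]=a^L_{0,*}[M]=a^L_{1,*}[M]=[a^L_1]$ by homotopy invariance. Your only addition is to flag that the smoothness of $H$ (asserted in the paper as ``easy to check'') deserves a careful exponential-law verification, which is a fair point but not a departure in method.
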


\begin{proof} By the previous proof, $a^L_0$ and $a^L_1$ are homotopic. Thus 
$[a^L_0] = a^L_{0,*}[M] = a^L_{1,*}[M] = [a^L_1].$
\end{proof}

The subset of $\pi_1(\diff(M))$ given by homotopy classes of smooth actions is not a subgroup, but we do have the following:

\begin{lem}\label{lem:one}  (i) $a_0$ is smoothly homotopic to $a_1$ through actions implies $[a^D_0] =
	  [a^D_1]\in \pi_1(\diff(M)).$
	  
(ii) $[a^D_0] =
	  [a^D_1]\in \pi_1(\diff(M))$ implies $a_0$ is smoothly homotopic to $a_1.$   
\end{lem}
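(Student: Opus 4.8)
The plan is to exploit the exponential correspondence between maps out of a product and maps into a mapping space, together with the single additional fact that an action $a$ satisfies $a(0,\cdot)=\tid$, which says precisely that the associated loop $a^D$ is based at $\tid$. Both parts then reduce to transporting a homotopy across this correspondence, and the only real subtlety is the preservation of smoothness, which is the same point already invoked in the assertion that ``$a$ is smooth iff $a^D$ is smooth''.

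For (i), suppose $F\colon[0,1]\times S^1\times M\to M$ is a smooth homotopy through actions from $a_0$ to $a_1$, so that each slice $a_t$ defined by $a_t(\theta,m)=F(t,\theta,m)$ is an action. I would set $G\colon[0,1]\times S^1\to\diff(M)$, $G(t,\theta)=a^D_t(\theta)=F(t,\theta,\cdot)$. Because each $a_t$ is an action, $a_t(0,m)=m$, hence $G(t,0)=\tid$ for every $t$. Thus $G$ is a homotopy of loops in $\diff(M)$ based at $\tid$ throughout, with $G(0,\cdot)=a^D_0$ and $G(1,\cdot)=a^D_1$. Once $G$ is known to be continuous (which follows from smoothness of $F$), this is exactly a based homotopy in $\pi_1(\diff(M),\tid)$, giving $[a^D_0]=[a^D_1]$.

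For (ii), I would run the same construction in reverse. The equality $[a^D_0]=[a^D_1]$ in $\pi_1(\diff(M),\tid)$ provides a based homotopy $G\colon[0,1]\times S^1\to\diff(M)$ with $G(0,\cdot)=a^D_0$, $G(1,\cdot)=a^D_1$, and $G(t,0)=\tid$ for all $t$. Setting $F(t,\theta,m)=G(t,\theta)(m)$ then gives $F(0,\theta,m)=a_0(\theta,m)$ and $F(1,\theta,m)=a_1(\theta,m)$, so $F$ is a smooth homotopy from $a_0$ to $a_1$ in the sense of Definition~\ref{defsmooth}(i). I emphasize that this construction does \emph{not} produce a homotopy through actions: for intermediate $t$ the map $\theta\mapsto G(t,\theta)$ need not be a homomorphism $S^1\to\diff(M)$, so the slices $F(t,\cdot,\cdot)$ need not be actions. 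This asymmetry is precisely why (i) upgrades to equality through actions while (ii) yields only smooth homotopy, matching the one-directional form of the two statements.

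The only genuine work, and the step I expect to require care, is verifying that this correspondence preserves smoothness in the parametrized setting, i.e.\ that smoothness of $F$ as a map of the finite-dimensional manifold $[0,1]\times S^1\times M$ is equivalent to smoothness of $G$ as a map into $\diff(M)$ with its Hilbert (or Fr\'echet) manifold structure. This is the parametrized analogue of the already-cited equivalence ``$a$ smooth iff $a^D$ smooth'', and I would deduce it the same way, from the exponential law for manifolds of mappings together with smoothness of the evaluation map $\diff(M)\times M\to M$. By Appendix~B the Hilbert and Fr\'echet topologies give the same homotopy type, so it suffices to carry out the verification in whichever model is more convenient.
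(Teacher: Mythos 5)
Your part (i) is essentially the paper's argument: both define $G(t,\theta)=F(t,\theta,\cdot)$ and read off a homotopy of loops in $\diff(M)$. One small omission: you use the ``through actions'' hypothesis only to get basedness ($G(t,0)=\tid$), but it is also needed to guarantee that $G$ actually lands in $\diff(M)$ at all --- for intermediate $t$ the map $F(t,\theta,\cdot)$ is a diffeomorphism because $F(t,-\theta,\cdot)$ inverts it, which is exactly the action property. The paper makes this verification explicitly; without it $G$ is only a homotopy in $\maps(M,M)$ and says nothing about $\pi_1(\diff(M))$.

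Part (ii) has a genuine gap. The equality $[a^D_0]=[a^D_1]$ in $\pi_1(\diff(M),\tid)$ hands you a homotopy $G$ that is a priori only \emph{continuous}; nothing in the definition of the fundamental group produces a smooth one. Consequently $F(t,\theta,m)=G(t,\theta)(m)$ is only continuous, whereas Definition~\ref{defsmooth}(i) demands a smooth $F$. Your closing paragraph worries about the exponential law (smoothness of $F$ versus smoothness of $G$), but that is not where the difficulty lies: the real step, and the one the paper supplies, is to upgrade the continuous homotopy to a smooth one. The paper does this by passing to the continuous map $G_1:[0,1]\times S^1\times M\to M$ and invoking the Whitney approximation theorem (\cite[Thm.~10.21]{lee}) to replace it by a smooth map with the same boundary behavior, i.e.\ agreeing with $a_0$ and $a_1$ at $t=0,1$. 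Without some such relative smoothing argument your $F$ does not witness smooth homotopy in the sense the lemma requires.
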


\begin{proof} (i) Given $F$ as above, set $G:[0,1]\times S^1\to
	  \diff(M)$ by $G(t,\theta)(m) = F(t,\theta,m).$  We have $G(0,\theta)(m)
	  = a_0(\theta,m) = a^D(\theta)(m)$, $G(1,\theta)(m) = a_1(\theta, m)
	  = a^D_1(\theta)(m)$.
$G(t,\theta)\in\diff(M)$, as
$$G(t,-\theta)(G(t,\theta)(m)) = F(t,-\theta,F(t,\theta,m)) = F(t,0,m) = m,$$
since $F(t,\cdot,\cdot)$ is an action.
Since $F$ is
smooth, $G$ is a continuous (in fact smooth) map of $\diff(M)$.
Thus $a^D_0, a^D_1$ 
are homotopic as elements of 
 $\maps(S^1,\diff(M))$, so $[a^D_0] = [a^D_1].$
\medskip

\noindent (ii)  Let $G:[0,1]\times S^1\to \diff(M)$ be a continuous
homotopy from
$a^D_0(\theta) = G(0,\theta) $ to $a^D_1(\theta) = G(1,\theta)$ with $G(t,1) = {\rm Id}$
for all $t$. 
$G$ gives a continuous map $G_1:[0,1]\times S^1\times M\to M$, which by 
\cite[Thm. 10.21]{lee} can be approximated arbitrarily well by a smooth map with the same boundary behavior as $G_1.$  This gives an arbitrarily good approximation of
$G$ by a smooth map, also called $G$. Set 
$F:[0,1]\times S^1\times M\to M$ by
$F(t,\theta,m) = G(t,\theta)(m).$  
 $F$ is smooth.  Then
$F(0,\theta,m) = G(0,\theta)(m) = a^D_0(\theta)(m) = a_0(\theta,m)$, and
$F(1,\theta,m) = a_1(\theta,m).$  Thus $a_0$ and $a_1$ are smoothly homotopic.

\end{proof}


This yields a technique to use WCS classes to distinguish actions and to investigate 
$\pi_1(\diffm).$  From now on, ``homotopic" means ``smoothly homotopic."

\begin{prop} \label{prop:two} Let dim$(M)=2k-1.$ Let $a_0, a_1:S^1\times M\to M$ be actions.


(i)  If $\int_{[a^L_0]} CS^W_{\kk} \neq \int_{[a^L_1]} CS^W_{\kk}$, then $a_0$ and $a_1$
  are not homotopic through actions, and $[a^D_0]\neq [a^D_1]\in \pi_1(\diff(M)).$

(ii)  If there exists an action $a$ with $\int_{[a^L]} CS^W_{\kk} \neq 0,$ then
  $\pi_1(\diff(M))$  is infinite.

\end{prop}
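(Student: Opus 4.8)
The plan is to reduce both statements to the homomorphism $\eta$ of Proposition \ref{prop:insert} together with the invariance of the pairing $\int_{[a^L]}CS^W_{2k-1}$ under smooth homotopy. The essential input is that $CS^W_{2k-1}$ is a \emph{closed} form on $LM$ (Theorem \ref{thm:5.5}(i), using $\dim M = 2k-1$), so that its integral depends only on the homology class of the cycle over which we integrate, and the fact that homotopic maps $a^L_0,a^L_1:M\to LM$ push $[M]$ to the same class in $H_{2k-1}(LM,\Z)$.

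\emph{Part (i).} First I would argue the contrapositive: suppose $a_0$ and $a_1$ are smoothly homotopic through actions. By Lemma \ref{lem:one}(i) this gives $[a^D_0]=[a^D_1]\in\pi_1(\diff(M))$, and by Proposition \ref{prop:insert} the homomorphism $\eta$ then forces $[a^L_0]=\eta([a^D_0])=\eta([a^D_1])=[a^L_1]$ in $H_{2k-1}(LM,\Z)$. Since $CS^W_{2k-1}$ is closed, the pairing with homology is well defined, so $\int_{[a^L_0]}CS^W_{2k-1}=\int_{[a^L_1]}CS^W_{2k-1}$, contradicting the hypothesis. (In fact one can avoid Lemma \ref{lem:one} and use Corollary \ref{cor:one} directly, since homotopy through actions is a fortiori an ordinary smooth homotopy.) This proves the first clause. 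For the second clause, $[a^D_0]=[a^D_1]$ would likewise give $[a^L_0]=[a^L_1]$ via $\eta$, hence equal integrals, again contradicting the hypothesis; therefore $[a^D_0]\neq[a^D_1]$.

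\emph{Part (ii).} Given a single action $a$ with $\int_{[a^L]}CS^W_{2k-1}\neq 0$, the idea is to produce infinitely many distinct elements of $\pi_1(\diff(M))$ by taking powers of $[a^D]$ in the group $\pi_1(\diff(M))$. I would consider the iterated actions $a^{(n)}$, i.e. the class $n[a^D]$; since $\eta$ is a homomorphism into an abelian group, $\eta(n[a^D])=n\,\eta([a^D])=n[a^L]$, so $\int_{n[a^L]}CS^W_{2k-1}=n\int_{[a^L]}CS^W_{2k-1}$, which takes infinitely many distinct (real) values as $n$ ranges over $\Z$. Hence the classes $n[a^D]$ are pairwise distinct in $\pi_1(\diff(M))$, and the group is infinite.

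\emph{Main obstacle.} The genuinely subtle point is the interface between the \emph{smooth} homotopy category (needed to pull back forms and evaluate $CS^W_{2k-1}$) and the \emph{continuous} category in which $\pi_1(\diff(M))$ is naturally defined; this is exactly what Lemma \ref{lem:one}(ii) and the smoothing argument of Proposition \ref{prop:insert} are for, and I would lean on them to ensure $\eta$ is well defined on the continuous $\pi_1$. A secondary care-point in part (ii) is to phrase the infinitude purely through the image under $\eta$: I do not need the subset of classes of smooth actions to be a subgroup (it is not), only that the integers $n[a^L]$ have distinct pairings, which detects distinct preimages $n[a^D]$ and makes $\pi_1(\diff(M))$ infinite.
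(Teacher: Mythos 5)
Your part (i) follows essentially the same contrapositive route as the paper: closedness of $CS^W_{2k-1}$ in dimension $2k-1$ (Theorem \ref{thm:5.5}(i)) makes the pairing with $H_{2k-1}(LM,\Z)$ well defined, so distinct integrals force $[a^L_0]\neq[a^L_1]$, and then Corollary \ref{cor:one} and Proposition \ref{prop:insert} give both non-homotopy conclusions; this is correct. Part (ii) is where you genuinely diverge. The paper works with the explicit iterate $a_n(\theta,m)=a(n\theta,m)$ and proves $\int_{[a^L_n]}CS^W_{2k-1}=n\int_{[a^L]}CS^W_{2k-1}$ by a direct computation with the local formula (\ref{5.4}): each term of the integrand is linear in $\dot\gamma$, so the reparametrization $\theta\mapsto n\theta$ scales each $S^1$-integral by $n$. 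You instead observe that the composite $\lambda=\langle CS^W_{2k-1},\cdot\rangle\circ\eta:\pi_1(\diff(M))\to\C$ is a group homomorphism (Proposition \ref{prop:insert} together with bilinearity of the cohomology--homology pairing), so $\lambda(n[a^D])=n\lambda([a^D])\neq 0$ for $n\neq 0$ and $[a^D]$ has infinite order. Your route is softer and more general, since it uses only that $CS^W_{2k-1}$ is closed and nothing about its local structure; the trade-off is that it rests entirely on the additivity of $\eta$ under loop concatenation, which Proposition \ref{prop:insert} asserts with only a one-line ``Hurewicz-type'' justification, whereas the paper's computational argument in (ii) is self-contained precisely because it never invokes that additivity. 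Both arguments are valid.
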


\begin{proof} 
(i)  If $[a^L_0] =  [a^L_1]\in 
H_n(LM, \C)$, then  $a^L_0 = a^L_1 + \partial \alpha$ for some $2k$-chain $\alpha.$  
Consider $CS^W_{2k-1}$ as an element of singular cohomology. For $\langle\ ,\  \rangle$ the pairing of cochains and chains, we have
\begin{eqnarray*} \int_{[a^L_0]} CS^W_{2k-1} &=& \langle CS^W_{2k-1}, a^L_0\rangle 
= \langle CS^W_{2k-1}, a^L_1 + \partial \alpha\rangle\\
& =&  \int_{[a^L_1]} CS^W_{2k-1} +
\langle \delta CS^W_{2k-1}, \alpha\rangle 
= \int_{[a^L_1]} CS^W_{2k-1} + \int_{\alpha} dCS^W_{2k-1}\\
&=&   \int_{[a^L_1]} CS^W_{2k-1}.  
\end{eqnarray*} 
Thus $[a^L_0] \neq  [a^L_1]$, so $a_0$ is not homotopic to $a_1$ (Cor.~\ref{cor:one}) and hence not homotopic through actions.  By 
Prop.~\ref{prop:insert}, $[a^D_0] \neq  [a^D_1]$.  

(ii) Let $a_n$ be the $n^{\rm th}$ iterate of 
 $a$, i.e. $a_n(\theta,m) =
a(n\theta,m).$  
We claim that 
 $\int_{[a^L_n]}CS^W_{\kk} =
n\int_{[a^L_1]}CS^W_{\kk}$.  By (\ref{5.4}), every term in $CS^W_{\kk}$ is of the
form $\ints\dot\gamma(\theta) f(\theta)$, where $f$ is a periodic function on the
circle.  Each loop $\gamma\in
a^L_1(M)$ corresponds to the loop $\gamma(n\cdot)\in a^L_n(M).$  Therefore the term
$\ints\dot\gamma(\theta) f(\theta)$ is replaced by 
$$\ints \frac{d}{d\theta}\gamma(n\theta) f(n\theta)d\theta 
 = n\int_0^{2\pi} \dot\gamma(\theta)f(\theta)d\theta.$$
Thus $\int_{[a^L_n]}CS^W_{\kk} = n\int_{[a^L]}CS^W_{\kk}.$
 By (i), the $[a^L_n]\in 
\pi_1(\diff(M))$
are all distinct.  

\end{proof}

\begin{rem}
If two actions
are homotopic through actions,
the $S^1$-index of an equivariant differential operator of the two actions is the same. (Here equivariance
means for each action $a_t, t\in [0,1].$)
In contrast to Proposition \ref{prop:two}(ii), the $S^1$-index of an equivariant operator
cannot distinguish actions on odd dimensional manifolds, as the
$S^1$-index vanishes. This can be seen from the
local version of the
$S^1$-index theorem \cite[Thm. 6.16]{BGV}. For the normal bundle to the
fixed point set is always even dimensional, so the fixed point set consists of
odd dimensional submanifolds.  The integrand in the fixed point submanifold
contribution to the $S^1$-index is the constant term in the short time
asymptotics of the appropriate heat kernel.  In odd dimensions, this constant
term is zero.
\end{rem}

\subsection{Sasakian structures over K\"ahler surfaces and diffeomorphism groups}

Let $(M^4, g, J,\omega)$ be a compact integral K\"ahler surface, i.e.~$J$ is the complex structure, $g$ is the 
K\"ahler metric, and with K\"ahler form $\omega\in H^2(M,\Z).$  Recall that 
$\omega(X,Y) = g(JX,Y) = -g(X,JY).$  Recall that $M$ is integral iff it is projective algebraic.

Fix $p\in \Z.$ 
As in geometric quantization, we can construct a $S^1$-bundle $L_p\to M$ with connection 
$\be$ with curvature $d\be = p\omega.$  Let $\bmk$ be the total space of $L_p$. Our goal is to show that $\pi_1(\diff(\overline M_p))$ is infinite for $|p|\gg 0.$

$\bmk$ has a Sasakian structure, as we now sketch; see \cite[\S4.5]{blair}, \cite[Lemma 1]{oneill} for details. The horizontal space of the connection is $\calH = {\rm Ker}(\be)$.  We choose a normalized vertical vector
$\bxi$ satisfying 
$$\be(\bxi) = 1.$$
Note that $d\be(\bxi, \cdot) = 0,$ since the curvature form is horizontal.
 In the language of contact geometry, $\xi$ is the characteristic vector field of the fibration $\bmk\stackrel{\pi}{\to} M.$  Define a metric $\bg$ on $\bmk$ by
$$\bg(\bx,\by) = g(\pi_*\bx, \pi_*\by) + \be(\bx)\be(\by).$$
Then $\bxi\perp \calH, \bg(\bxi, \bxi) = 1, \bg(\bxi, \bx) = \be(\bx).$  Moreover, $\bxi$ is a Killing vector field,
since
\begin{eqnarray*} \calL_{\bxi}\bg &=& \calL_{\bxi}\pi^* g + \calL_{\bxi}\be\otimes \be+ \be\otimes \calL_{\bxi}\be\\
&=&  0 + [d\be(\bxi,\cdot) + d(\be(\bxi))] \otimes \be + \be\otimes  [d\be(\bxi,\cdot) + d(\be(\bxi))] \\
&=& 0.  
\end{eqnarray*}
Thus the flow lines of $\xi$ are geodesics, making $\pi:\bmk\to M$ a Riemannian submersion with totally geodesic fiber.

Let $\bp$ be the $(1,1)$-tensor on $\bmk$ defined by
$$\bp(\bx_{\bar p}) = (J[\pi_*\bx]_{\pi(p)})^L,$$
with $\xl$ the horizontal lift of $X\in T_{\pi(p)}M.$  It is easy to check that 
$$\bp(\bxi) = 0, \bp(\xl) = (JX)^L, \bp^2 = -I + \be\otimes\bxi.$$

Let $\bx = \bx^H + \bx^V\in T\bmk$ be the decomposition of $\bx$ into horizontal and vertical components for the Levi-Civita connection $\bn$ associated to $\bg.$  Define $A:TM\to TM$, $H:TM\otimes TM \to \R$ by
$AX = A(X) = \pi_*(\bn_{\xl}\bxi),$ $(\bn_{\xl}\yl)^V = H(X,Y)\bxi.$  Using $(\bn_{\xl}\yl)^L = (\nabla_XY)^L$
\cite[Lem. 1]{oneill} and $\bg( \bn_{\xl}\bxi,\bxi) = 2\xl(\bg(\bxi,\bxi) )= 0$, we get
$$\bn_{\xl} \yl = (\nabla_XY)^L + H(X,Y)\bxi,\  \bn_{\xl}\bxi = (AX)^L.$$
Also,
$$\bg( \yl, \bxi) = 0 \Rightarrow \bg(\bn_{\xl}\yl,\bxi) +
\bg( \bn_{\xl}\bxi, \yl) = 0\Rightarrow H(X,Y) =- g( AX,Y).$$

\begin{lem}  $AX = pJX.$  Equivalently, $H(X,Y) = -pg (JX, Y).$
\end{lem}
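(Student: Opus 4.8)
The plan is to compute the quantity $AX = \pi_*(\bn_{\xl}\bxi)$ directly from the defining relation for the connection, using the characterization of $\bxi$ as the dual (via $\bg$) of the contact form $\be$ together with the curvature equation $d\be = p\omega$. The key algebraic input is that $\bg(\bx,\bxi) = \be(\bx)$, so that $\bxi$ is metrically dual to $\be$; this lets me convert covariant derivatives of $\bxi$ into exterior-derivative expressions involving $d\be$.

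First I would use the Koszul-type formula, or more simply the identity for the covariant derivative of the metric dual of a one-form, to express $\bg(\bn_{\xl}\bxi, \yl)$ in terms of $d\be$ and the Lie-derivative terms. Since $\bxi$ is a Killing field (already established in the excerpt via $\calL_{\bxi}\bg = 0$), the symmetric part of $\bn\bxi$ vanishes, so $\bn\bxi$ is skew-adjoint with respect to $\bg$; its antisymmetric part is governed precisely by $d\be$. Concretely, for a Killing field $\bxi$ with dual one-form $\be$, one has $\bg(\bn_{\bx}\bxi, \by) = \tfrac{1}{2}d\be(\bx,\by)$. Applying this with $\bx = \xl$, $\by = \yl$, and substituting $d\be = p\omega$ together with $\omega(X,Y) = g(JX,Y)$, I get
\[
\bg(\bn_{\xl}\bxi, \yl) = \tfrac{1}{2}p\,\omega(X,Y) = \tfrac{1}{2}pg(JX,Y).
\]
A factor-of-two discrepancy would then need reconciling against the stated $H(X,Y) = -pg(JX,Y)$ and the sign convention $H(X,Y) = -g(AX,Y)$ recorded just above the lemma; I would resolve it by being careful about the Killing-field convention and the orientation of $d\be$.

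Alternatively, and perhaps more in keeping with the O'Neill submersion framework already invoked, I would derive the result from $\bn_{\xl}\bxi = (AX)^L$ by pairing against $\yl$ and using that the vertical component of $\bn_{\xl}\yl$ is $H(X,Y)\bxi$ with $H(X,Y) = -g(AX,Y)$. Computing $\bg(\bn_{\xl}\bxi, \yl)$ via the fundamental O'Neill tensor (which measures the obstruction to horizontality) identifies it with the curvature of the connection $\be$, i.e.\ with $\tfrac{1}{2}d\be(\xl,\yl) = \tfrac{1}{2}p\omega(X,Y)$, and the normalization of the horizontal-lift brackets against $\be$ supplies the correct constant. Once $AX = pJX$ is established, the equivalent statement $H(X,Y) = -pg(JX,Y)$ follows immediately from $H(X,Y) = -g(AX,Y)$ and $g(JX,Y) = \omega(X,Y)$.

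The main obstacle I anticipate is bookkeeping rather than conceptual: pinning down the exact numerical constant and sign, since three separate conventions interact here — the Killing-field/curvature normalization $\bg(\bn_{\bx}\bxi,\by) = \tfrac{1}{2}d\be(\bx,\by)$, the geometric-quantization curvature normalization $d\be = p\omega$, and the K\"ahler sign convention $\omega(X,Y) = g(JX,Y)$. The apparent factor of $\tfrac{1}{2}$ must cancel against the definition of $d\be$ as the curvature (some sources absorb a $2$ into $[\xl,\yl]^V$), and I expect the cleanest route is to compute $\be([\xl,\yl]) = -d\be(\xl,\yl) = -p\omega(X,Y)$ directly and feed this into the O'Neill formula $H(X,Y)\bxi = (\bn_{\xl}\yl)^V = \tfrac{1}{2}[\xl,\yl]^V$, which yields $H(X,Y) = -\tfrac{1}{2}p\omega(X,Y)$ or $-p\omega(X,Y)$ depending on the lift normalization; matching to the asserted statement fixes the convention.
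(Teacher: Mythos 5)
Your proposal is correct and takes essentially the same route as the paper: the paper likewise converts $\bg(\bn_{\bx}\bxi,\by)$ into $d\be(\bx,\by)$ using metric compatibility of $\bn$, the identity $\be(\by)=\bg(\bxi,\by)$, and the skew-symmetry of $\bn\bxi$ coming from the Killing property (cited as O'Neill's Lemma 2), and then substitutes $d\be = p\omega$ and $\omega(X,Y)=g(JX,Y)$. The factor of two you flag is settled by the paper's convention $d\be(\bx,\by) = \tfrac{1}{2}\left[(\bn_{\bx}\be)(\by) - (\bn_{\by}\be)(\bx)\right]$, under which the skew-symmetry makes the two terms equal and yields $d\be(\bx,\by)=\bg(\bn_{\bx}\bxi,\by)$ on the nose, so no residual $\tfrac{1}{2}$ survives.
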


\begin{proof}  We compute
\begin{eqnarray*} (\bn_{\bx}\be)(\by) &=&  \bn_{\bx}(\be(\by))
-\be(\bn_{\bx}\by) = \bn_{\bx}(\bg(\bxi, \by)) - \be(\bn_{\bx}\by) \\
&=& \bg(\bn_{\bx}\bxi, \by) + \bg(\bxi, \bn_{\bx}\by)- \be(\bn_{\bx}\by) 
= \bg(\bn_{\bx}\bxi, \by).
\end{eqnarray*}
Thus
\begin{eqnarray*} d\be(\bx, \by) &=& \frac{1}{2}[ (\bn_{\bx}\be)(\by) -  (\bn_{\by}\be)(\bx)]\\
&=& \frac{1}{2}[ \bg(\bn_{\bx}\bxi, \by) - \bg(\bn_{\by}\bxi, \bx)]
= \bg(\bn_{\bx}\bxi, \by),
\end{eqnarray*}
by \cite[Lemma 2]{oneill}.  Thus
$$pg(JX, Y) = d\be(X^L, Y^L) = \bg(\bn_{\xl}\bxi, \yl) = \bg( (AX)^L, Y^L) = g(AX,Y),$$
so 
$$AX = pJX,\  H(X,Y) = -pg(JX,Y).$$

\end{proof}

In summary, we have
\begin{equation}\label{sub1}\bn_{\xl}\yl = (\nabla_XY)^L -pg(JX,Y)\bxi,\ \bn_{\xl}\bxi = \bn_{\bxi}X^L
 = p(JX)^L,\ \bn_{\bxi}\bxi = 0,
\end{equation}
which implies
\begin{equation}\label{sub2} \bn_{\bx}\bxi = p\bar\Phi \bx
\end{equation}
Moreover,
\begin{eqnarray}\label{sub3}\bg(\bar\Phi \bx, \bar \Phi \by) &=& \bg(\bx,\by) - \be(\bx)\be(\by),
\\
(\bn_{\bx}\bar\Phi)( \by) &=& -p(\bg(\bx,\by)\bxi - \be(\by)\bx),\nonumber
\end{eqnarray}
which can be checked case by case for $\bx, \by$ vertical or horizontal.  By definition, the data
$(\bmk, \bp, \bxi, \be, \bg)$ satisfying (\ref{sub2}) and the second equation in (\ref{sub3}) defines a 
Sasakian structure \cite[\S6.3]{blair}.

Following \cite{oneill}, we now compute the curvature  $\overline R$ of $\bg$ in terms of the curvature $R$ of $M$,  although our computations are simpler since the fiber of $\bmk$ is one-dimensional. 

\begin{prop}  \label{3.7}
\begin{eqnarray*}\bg(\br(X^L, Y^L)\zl,W^L) &=& \la R(X,Y)Z,W\ra + p^2[-\la JY,Z\ra\la JX,W\ra\\
&&\quad 
+\la JX,Z\ra\la JY,W\ra +2\la JX,Y\ra \la JZ,W\ra],\\
\bg(\br(\xl, \yl)\zl, \bxi) &=& 0,\\        
\bg(\br(\bxi,\xl)\yl,\bxi) &=& p^2\la X,Y\ra.
\end{eqnarray*}
\end{prop}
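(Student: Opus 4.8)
The plan is to compute $\br$ directly from the definition $\br(\bx,\by)\bz = \bn_{\bx}\bn_{\by}\bz - \bn_{\by}\bn_{\bx}\bz - \bn_{[\bx,\by]}\bz$, feeding in the structure equations (\ref{sub1})--(\ref{sub2}) for $\bn$ together with the Kähler identity $\nabla J = 0$ on $M$. The decisive simplification is to fix a point $\bar p\in\bmk$ and extend $X,Y,Z,W$ to vector fields on $M$ that are parallel at $\pi(\bar p)$, i.e.\ $\nabla X=\nabla Y=\nabla Z=\nabla W=0$ there. Because $\nabla J=0$, this also forces $\nabla(JX)=J\nabla X=0$ at $\pi(\bar p)$, and since each coefficient $\la JY,Z\ra = \pi^*(g(JY,Z))$ is constant along the fibers, both its $\xl$-derivative and its $\bxi$-derivative vanish at $\bar p$. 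Consequently every term drops except genuine second covariant derivatives on the base and the purely algebraic $p^2$-contributions produced by the vertical-to-horizontal transition $\bn_{\bx}\bxi = p\bp\bx$ of (\ref{sub2}).

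First I would record the two Lie brackets. Torsion-freeness gives $[\xl,\yl]=\bn_{\xl}\yl-\bn_{\yl}\xl$, which by (\ref{sub1}) and the antisymmetry $\la JX,Y\ra = -\la JY,X\ra$ equals $[X,Y]^L - 2p\la JX,Y\ra\bxi$; at $\bar p$ the parallel frame makes $[X,Y]=0$, so this reduces to $-2p\la JX,Y\ra\bxi$. Likewise $[\bxi,\xl]=\bn_{\bxi}\xl-\bn_{\xl}\bxi = p(JX)^L - p(JX)^L = 0$. For the first identity I then expand $\bn_{\yl}\zl=(\nabla_YZ)^L-p\la JY,Z\ra\bxi$ and apply $\bn_{\xl}$: the horizontal second derivative $(\nabla_X\nabla_YZ)^L$ survives, and $\bn_{\xl}\bxi = p(JX)^L$ contributes $-p^2\la JY,Z\ra(JX)^L$, while all coefficient-derivative terms die at $\bar p$. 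Subtracting the analogous expression with $X$ and $Y$ interchanged, and the bracket term $\bn_{[\xl,\yl]}\zl = -2p\la JX,Y\ra\,\bn_{\bxi}\zl = -2p^2\la JX,Y\ra(JZ)^L$, the horizontal parts assemble into $R(X,Y)Z$ (since $[X,Y]=0$ at $\bar p$) and the three vertical transitions give exactly the stated $p^2$-terms; pairing with $W^L$ yields the first formula. Since every surviving summand is horizontal, $\br(\xl,\yl)\zl$ has no $\bxi$-component, which is precisely the second identity. For the third, $\bn_{\bxi}\yl = p(JY)^L$ and then $\bn_{\xl}(JY)^L = (\nabla_X JY)^L - p\la JX,JY\ra\bxi$ collapses at $\bar p$ to $-p^2\la JX,JY\ra\bxi = -p^2\la X,Y\ra\bxi$ via the isometry $\la JX,JY\ra = \la X,Y\ra$; meanwhile $\bn_{\bxi}\bn_{\xl}\yl$ vanishes and $[\bxi,\xl]=0$, so $\br(\bxi,\xl)\yl = p^2\la X,Y\ra\bxi$, giving the result after pairing with $\bxi$.

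The main obstacle is bookkeeping rather than conceptual depth: one must check that in the parallel frame every derivative of a coefficient $\la JY,Z\ra$, whether taken along $\xl$ or along the vertical $\bxi$, genuinely vanishes, so that the only surviving vertical-to-horizontal conversions are the explicit $p\bp$ terms of (\ref{sub2}). The Kähler hypothesis $\nabla J=0$ is used in two essential places: it guarantees $\nabla(JY)=0$ at $\pi(\bar p)$, and the compatibility $\la JX,JY\ra=\la X,Y\ra$ produces the clean $p^2\la X,Y\ra$ of the third identity. This one-dimensional-fiber computation is exactly the shortcut that replaces the general O'Neill submersion formulas.
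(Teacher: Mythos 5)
Your proof is correct and follows essentially the same route as the paper: a direct expansion of $\br$ from its definition using the structure equations (\ref{sub1})--(\ref{sub2}), with the horizontal second derivatives assembling into $R(X,Y)Z$ and the $\bn\bxi = p\bp$ transitions producing the $p^2$-terms. The only difference is cosmetic: you kill the first-order coefficient terms by choosing fields parallel at a point, whereas the paper carries those terms along and cancels them explicitly via $\nabla_XY-\nabla_YX=[X,Y]$; both uses of $\nabla J=0$ are equivalent.
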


\begin{proof}  
Denoting $g(X,Y)$ by 
$\la X,Y\ra$, we have
\begin{eqnarray*} \bn_{\xl}(\bn_{\yl} \zl )&=&\bn_{\xl}( (\nabla_YZ)^L -p\la JY,Z\ra\bxi)\\
&=&[ (\nabla_X\nabla_YZ)^L -p\la JX,\nabla_YZ\ra\bxi ]- p\xl\la JY,Z\ra\bxi 
-p\la JY,Z\ra\bn_{\xl}\bxi\\
&=&(\nabla_X\nabla_YZ)^L  
- p\la JX,\nabla_YZ\ra\bxi 
-p(\la J\nabla_XY,Z\ra +\la JY, \nabla_XZ\ra)\bxi\\
&&\quad - p^2\la JY,Z\ra (JX)^L.
\end{eqnarray*}
Also,
\begin{eqnarray*} [\xl,\yl] &=& \bn_{\xl}\yl-\bn_{\yl}\xl = (\nabla_{X}Y)^L -p\la JX,Y\ra\bxi
- (\nabla_{Y}X)^L +p\la JY,X\ra\bxi\\
&=& [X,Y]^L - 2p\la JX,Y\ra \bxi,
\end{eqnarray*}
so
\begin{eqnarray*} \bn_{[\xl,\yl]}\zl &=& \bn_{[X,Y]^L}\zl - 2p\la JX,Y\ra \bn_{\bxi}\zl\\
&=& (\nabla_{[X,Y]}Z)^L -p\la J[X,Y], Z\ra\bxi -2p^2\la JX,Y\ra (JZ)^L.
\end{eqnarray*}
Thus
\begin{eqnarray*} \br(\xl,\yl)\zl &=& \bn_{\xl}\bn_{\yl}\zl - \bn_{\yl}\bn_{\xl}\zl - \bn_{[\xl,\yl]}\zl\\
&=& (\nabla_X\nabla_YZ)^L- p^2\la JY,Z\ra (JX)^L
-  (\nabla_Y\nabla_XZ)^L+ p^2\la JX,Z\ra (JY)^L\\
&&\quad  - (\nabla_{[X,Y]}Z)^L +2p^2\la JX,Y\ra (JZ)^L\\
&&\quad -p\left[\la JX,\nabla_YZ\ra
+ \la J\nabla_XY,Z\ra + \la JY,\nabla_XZ\ra\right.\\
&&\quad \left.
-\la JY,\nabla_XZ\ra -  \la J\nabla_YX,Z\ra - \la JX,\nabla_YZ\ra
-  \la J[X,Y],Z\ra\right]\bxi\\
&=& (R(X,Y)Z)^L +p^2[-\la JY,Z\ra (JX)^L + \la JX,Z\ra (JY)^L
+2\la JX,Y\ra (JZ)^L].
\end{eqnarray*}
This implies the first two statements of the Proposition.

 For the last statement,
we have
\begin{eqnarray*} \bn_{\xl}(\bn_{\bxi}\yl) &=& \bn_{\xl} (\bn_{\yl}\bxi) = -p\bn_{\xl}J\yl = -p[(J\nabla_XY)^L +p\la JX,JY\ra \bxi],\\
\bn_{[\bxi,\xl]}\yl &=& 0,\\
\bn_{\bxi}\bn_{\xl}\yl &=& \bn_{\bxi}((\nabla_XY)^L + p\la JX,Y\ra \bxi) = \bn_{\bxi}(\nabla_XY)^L\\
&=& \bn_{(\nabla_XY)^L}\bxi = -p(J\nabla_XY)^L,
\end{eqnarray*}
using $\nabla J = J\nabla, \bxi\la JX,Y\ra = 0, \nabla_{\bxi}\bxi = 0.$  Thus
$$\br(\bxi,\xl)\yl = p^2\la X,Y\ra \bxi.$$
\end{proof}

We now compute $CS^W_5$ on $\bmk$.  Set $\dg = \bxi$ and let 
$\{\eb_i\}$ be an orthonormal frame of $T\bmk$ with $\bxi = \eb_1$.  Since $\eb_i$ is perpendicular to $\eb_1$ for $i\neq 1$, we can write  $\eb_i = e^L_i$ for an orthonormal frame 
 $\{e_i\}_{i=2}^5$ of $TM.$ 

Let $a:S^1\times \bmk\to \bmk$ be given by the rotation action in the $S^1$ fibers. As in \S3.1, we want to compute
\begin{equation}\label{want} \int_{[a^L]}CS^W_5 = \int_{a_*[\bmk]}CS^W_5 = \int_{\bmk} a^*CS^W_5.
\end{equation}
By (\ref{csg})
\begin{eqnarray}\label{exp}\lefteqn{a^*CS^W_{5,\gamma}(\eb_1,\ldots,\eb_5) }\nonumber \\
&=&  \frac{3}{5}   
\sum_\sigma \sgn(\sigma)  
\br(a_*\ebo,a_*\eb_\ell, a_*\bxi, a_*\eb_n)\br(a_*\ebt,a_*\ebth, a_*\eb_r,a_*\eb_\ell)
\\
&&\quad \qquad\qquad\cdot \br(a_*\ebf,a_*\ebfi,a_*\eb_n,a_*\eb_r).\nonumber
\end{eqnarray}
Because the action is via isometries, we have e.g.
$\br(a_*\ebo,a_*\eb_\ell, a_*\bxi, a_*\eb_n) = \br(\ebo,\eb_\ell,\bxi,\eb_n),$ and since $CS^W_5$ involves the average of each loop of these curvatures, we will omit $a_*$ in what follows.  In particular, we will write 
$\int_{[a^L]}CS_5^W$ just as $\int_{\bmk} CS^W_5.$

 Choose a  local orthonormal frame of $M$ of the form $\{e_2, Je_2, e_3, Je_3\}.$

\begin{lem}\label{lemapp}  
The Chern-Simons form (\ref{exp}) equals
\begin{eqnarray}\label{bsumna}\lefteqn{a^*CS^W_{5,\gamma}(\bxi, e_2, Je_2, e_3, Je_3) }\nonumber\\
&=&  \frac{3p^2}{5}     \left\{ 32\pi^2 p_1(\Omega)(e_2, Je_2, e_3, Je_3)
+ 32p^2[3 R(e_2, Je_2, e_3, Je_3) -R(e_2, e_3, e_2, e_3)   \right.\nonumber\\
&&\quad \left. 
-  R(e_2, Je_3, e_2, Je_3)  + R(e_2, Je_2, e_2, Je_2)+ R(e_3, Je_3, e_3, Je_3)]\right.\\
 &&\quad \left.+ 192p^4 \right\},\nonumber
\end{eqnarray}
where $p_1(\Omega)$ is the first Pontrjagin form of $(M,g).$   
\end{lem}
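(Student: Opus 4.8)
The plan is to substitute the three curvature formulas of Proposition~\ref{3.7} into (\ref{exp}), with $\dg=\bxi=\eb_1$ and the $J$-adapted orthonormal frame $\eb_i=\el_i$ ($i=2,\dots,5$) coming from $\{e_2,Je_2,e_3,Je_3\}$, and to organize the resulting triple sum by the number of times $\bxi$ occurs in each of the three curvature factors $F_1=\br(\ebo,\eb_\ell,\bxi,\eb_n)$, $F_2=\br(\ebt,\ebth,\eb_r,\eb_\ell)$, $F_3=\br(\ebf,\ebfi,\eb_n,\eb_r)$. The key structural input from Proposition~\ref{3.7} is that a factor vanishes unless it contains $0$ or $2$ copies of $\bxi$, and in the latter case the two copies must lie in opposite skew pairs: with $0$ copies the factor equals $R+p^2Q$ (all arguments horizontal), where $Q$ is the displayed quadratic-in-$J$ correction, and with $2$ copies it equals $\pm p^2\la\cdot,\cdot\ra$ by the third formula of the Proposition and skew symmetry.

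First I would carry out the index reduction. Since $F_1$ always carries $\bxi$ in its third slot, a nonzero $F_1$ needs exactly one further $\bxi$ in slot $1$ or $2$ (and none in slot $4$, so $n\neq1$), i.e.\ $\sigma(1)=1$ or $\ell=1$. If $\ell=1$, then $F_2$ acquires a $\bxi$ in its last slot, so for $F_2\neq0$ a second $\bxi$ must sit in slot $1$ or $2$ of $F_2$, forcing $\sigma(2)=1$ or $\sigma(3)=1$; in either configuration $F_3$ is fully horizontal, and after replacing $F_1,F_2$ by their $p^2\la\cdot,\cdot\ra$ values the resulting deltas fix the two contracted indices of $F_3$, so the signed sum over $\sigma$ becomes the complete antisymmetrization over four arguments of the single curvature-type tensor $\br=R+p^2Q$ appearing in $F_3$. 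This vanishes by the first Bianchi identity, which holds for $R$ and also for $Q$ (a direct check shows $Q$ has all the symmetries of a curvature tensor, being the $J$-part of a complex-space-form curvature). Hence only $\sigma(1)=1$ survives; then $F_1=\br(\bxi,\eb_\ell,\bxi,\eb_n)=-p^2\la e_\ell,e_n\ra=-p^2\delta_{\ell n}$, forcing $n=\ell$, while $r=1$ would leave a lone $\bxi$ in $F_2$ or $F_3$ and is excluded. This collapses (\ref{exp}) to
\[
a^*CS^W_{5,\gamma}(\bxi,e_2,Je_2,e_3,Je_3)=-\frac{3p^2}{5}\sum_{\sigma(1)=1}\sgn(\sigma)\sum_{\ell,r}\br(\ebt,\ebth,\eb_r,\eb_\ell)\,\br(\ebf,\ebfi,\eb_\ell,\eb_r),
\]
in which $F_2,F_3$ are now purely horizontal.

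Next I would expand $F_2$ and $F_3$ by the first formula of Proposition~\ref{3.7} as $\br=R+p^2Q$ and separate by degree in $p$. The $R\cdot R$ contribution is, after the signed sum over the permutations of $\{2,3,4,5\}$ and the contraction over $\ell,r$, the antisymmetrization of $\tr(\Omega^M\wedge\Omega^M)$ on $(e_2,Je_2,e_3,Je_3)$, i.e.\ the first Pontrjagin form $p_1(\Omega)$ up to the normalizing constant $32\pi^2$; this is the only source of the $p^2p_1$ term, since any configuration with two $p^2$-factors falls into the excluded case above. The cross contribution $R\cdot Q+Q\cdot R$ is linear in $R$, and contracting the $J$-factors of $Q$ against the adapted frame (using $\la JX,Y\ra=\omega(X,Y)$ and $J^2=-I$) singles out exactly the five components $3R(e_2,Je_2,e_3,Je_3)-R(e_2,e_3,e_2,e_3)-R(e_2,Je_3,e_2,Je_3)+R(e_2,Je_2,e_2,Je_2)+R(e_3,Je_3,e_3,Je_3)$ with coefficient $32$. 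Finally the $Q\cdot Q$ contribution contains no curvature and, by the same frame evaluation, reduces to the pure constant $192$. Assembling these three pieces with the prefactor $-\tfrac{3p^2}{5}$ yields (\ref{bsumna}).

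The main obstacle is the combinatorial bookkeeping in this last step. The signed sum over the $4!$ permutations of $\{2,3,4,5\}$, together with the two contracted indices, produces many terms that cancel in groups — exactly as in the $k=2$ computation of Proposition~\ref{WCSvan} — and organizing these cancellations for the $R\cdot Q$ and $Q\cdot Q$ pieces, while simultaneously pinning down the constants $32\pi^2$, $32$ and $192$ (which requires fixing the Chern--Weil normalization of $p_1$ and repeatedly invoking the K\"ahler identities for the adapted frame), is long and delicate. This is why the verification is naturally relegated to an appendix rather than carried out in line.
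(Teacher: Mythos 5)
Your proposal is correct and follows essentially the same route as the paper's Appendix A: the case analysis according to where $\bxi$ sits (your ``number of $\bxi$'s per factor'' bookkeeping is exactly the paper's division into the cases $A_1,\dots,A_5$, with Bianchi killing everything but $A_1$), the reduction $F_1=-p^2\delta_{\ell n}$, and the expansion of the surviving double contraction by powers of $p$ into the $p_1$, cross, and constant pieces matching the paper's $b_1$, $b_2+b_3+b_4$, and $b_5$. The only thing you leave undone is precisely what occupies most of the appendix --- the grouping of the $24$ permutations and the frame contractions that pin down the coefficients $32\pi^2$, $32$, and $192$ --- which you assert rather than verify, but your structural reduction to that computation is sound.
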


\noindent The proof is in  Appendix A.

Set 
$$|R|_\infty = \max\{|R(e_i, e_j, e_k, e_\ell)|: i,j,k,\ell\in\{2,3,4,5\}\}.$$ 
Here we use the previous notation for the orthonormal frame $\{e_2, e_3, e_4, e_5\}.$

\begin{prop} \label{39}$\int_{\bmk} CS^W_5 >0$ if
$$ p^2\left(32\pi^2  p_1(\Omega)(e_2, Je_2, e_3, Je_3) -224p^2 |R|_\infty + 192p^4\right) >0$$ 
pointwise on $M$.  Moreover,
$\int_{\bmk} CS^W_5 >0$ if
\begin{equation}\label{pest} p^2\left(96\pi^2 \sigma(M) -224p^2|R|_\infty  \vol(M) + 192p^4 
\cdot  \vol(M)\right) >0.
\end{equation}
Here $\sigma(M)$ is the signature of $M$.  
\end{prop}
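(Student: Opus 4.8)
The plan is to integrate the pointwise expression for $a^*CS^W_5$ from Lemma~\ref{lemapp} over $\bmk$ and reduce everything to data on the base $M$ via the Riemannian submersion $\pi:\bmk\to M$. Since the fiber is a circle of fixed length and $a$ acts by isometries, integration over $\bmk$ factors as the fiber length times integration over $M$; the overall positive constant does not affect the sign, so I would track only the sign of the resulting integral over $M$. The integrand from (\ref{bsumna}) is $\frac{3p^2}{5}$ times a sum of three pieces: a topological term $32\pi^2 p_1(\Omega)(e_2,Je_2,e_3,Je_3)$, a curvature term that is $32p^2$ times a specific combination of components of $R$, and the constant $192p^4$.

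The first key step is the pointwise estimate. Bounding the curvature combination below by $-224p^2|R|_\infty$ (there are seven curvature terms in the bracket, and $3+1+1+1+1 = 7$, giving the coefficient $224$ after multiplying by $32$), I would conclude that the integrand is positive pointwise whenever
\begin{equation*}
p^2\left(32\pi^2 p_1(\Omega)(e_2,Je_2,e_3,Je_3) - 224p^2|R|_\infty + 192p^4\right) > 0,
\end{equation*}
which yields the first claim after noting that a pointwise-positive integrand integrates to something positive.

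The second and more delicate step is the integral estimate (\ref{pest}), where I would replace the pointwise Pontrjagin term by its integral. Here the plan is to invoke the Hirzebruch signature theorem for the K\"ahler surface $M$: $\sigma(M) = \frac{1}{3}\int_M p_1(M)$, so that $\int_M p_1(\Omega)$ contributes $3\sigma(M)$ up to the normalizing constant $(2\pi)^{-2}$ implicit in the Pontrjagin \emph{form}. Carefully matching the $32\pi^2$ factor in (\ref{bsumna}) against the $(2\pi)^2$ in the Chern-Weil normalization of $p_1$, the topological contribution becomes $96\pi^2\sigma(M)$. The remaining two pieces integrate to $-224p^2|R|_\infty\vol(M)$ (using $|R|_\infty$ as a uniform bound) and $192p^4\vol(M)$ respectively, giving (\ref{pest}).

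The main obstacle I expect is the bookkeeping of constants in the signature step: one must verify that the combination $p_1(\Omega)(e_2,Je_2,e_3,Je_3)$ appearing in Lemma~\ref{lemapp}, evaluated on the adapted frame $\{e_2,Je_2,e_3,Je_3\}$ and integrated, recovers exactly $\int_M p_1$ as computed by Chern-Weil, with the correct orientation induced by the complex structure $J$. A sign or factor-of-$(2\pi)^2$ error here would propagate directly into the coefficient $96$. The curvature-bound step, by contrast, is routine once $|R|_\infty$ is defined as above, since it is just a crude term-by-term triangle-inequality estimate; the subtlety is entirely in correctly normalizing the topological term against the Wodzicki-residue conventions fixed in Definition~\ref{def:WCS}.
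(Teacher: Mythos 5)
Your proposal is correct and follows essentially the same route as the paper: a term-by-term bound of the curvature bracket in (\ref{bsumna}) by $7\cdot 32\,p^2|R|_\infty=224p^2|R|_\infty$ for the pointwise statement, and then $\vol(\bmk)=\lambda\vol(M)$ together with $\int_M p_1(\Omega)=3\sigma(M)$ to convert the Pontrjagin term into $96\pi^2\sigma(M)$ for the integral estimate. The normalization issue you flag is exactly the point the paper handles by writing $p_1(\Omega)=p_1(\Omega)(e_2,Je_2,e_3,Je_3)\,e_2^*\wedge(Je_2)^*\wedge e_3^*\wedge(Je_3)^*$, so your bookkeeping matches theirs.
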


\begin{proof}  The first statement follows from (\ref{bsumna}), noting that 
$\int_{\bmk} CS^W_5 = \int_{\bmk} a^*CS^W_5$ in our notation.  
For the second statement, we use $\vol(\bmk) = \lambda \vol(M)$, where the circle
fiber has constant length $\lambda$ \cite[p. 37]{blair}.  Also, 
$p_1(\Omega) = p_1(\Omega)(e_2, Je_2, e_3, Je_3)e_2^*\wedge (Je_2)^* \wedge e_3^*\wedge (Je_3)^*$ in the obvious notation, so $\int_{M_4} p_1(\Omega)(e_2, Je_2, e_3, Je_3)\dvol_M = \int_M p_1(\Omega)
= 3\sigma(M).$  This gives
$$p^2\left(96\pi^2\lambda \sigma(M) -224p^2|R|_\infty  \lambda\vol(M) + 192p^4 
\cdot \lambda \vol(M)\right) >0,$$
from which the second statement follows.
\end{proof}

\begin{cor} \label{bigcor} The loop of diffeomorphisms of $\bmk$ given by rotation in the circle fiber gives an element of infinite order in $\pi_1(\diff(\bmk))$ provided
$$ |R|_\infty < \frac{6}{7}p^2 + \frac{3\pi^2 \sigma(M)}{7\vol(M)p^2}, \  {\rm if}\ p\neq 0.$$
\end{cor}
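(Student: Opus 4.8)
The plan is to derive Corollary \ref{bigcor} directly from the second inequality \eqref{pest} in Proposition \ref{39}, which is the genuine analytic input; the corollary is then essentially an algebraic rearrangement together with one appeal to Proposition \ref{prop:two}(ii). First I would observe that the rotation action $a$ in the circle fiber produces, via the identifications of \S3.1, both the element $[a^D]\in\pi_1(\diff(\bmk))$ and the cycle $[a^L]\in H_*(L\bmk)$ with $\eta([a^D])=[a^L]$ by Proposition \ref{prop:insert}. By Proposition \ref{prop:two}(ii), to conclude that $[a^D]$ has infinite order it suffices to show $\int_{\bmk}CS^W_5 = \int_{[a^L]}CS^W_5 \neq 0$: the iterates $a_n$ then give infinitely many distinct classes, since $\int_{[a^L_n]}CS^W_5 = n\int_{[a^L]}CS^W_5$.

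Next I would reduce to the hypothesis of the corollary. Since $p\neq 0$, we have $p^2>0$, so the sign of the left side of \eqref{pest} is governed by the factor in parentheses. Dividing \eqref{pest} by $p^2\,\vol(M)>0$ and discarding the common positive factor, the condition $\int_{\bmk}CS^W_5>0$ reduces to
\begin{equation*}
96\pi^2\,\frac{\sigma(M)}{\vol(M)} - 224\,p^2|R|_\infty + 192\,p^4 > 0.
\end{equation*}
Solving this linear inequality in $|R|_\infty$ gives
\begin{equation*}
|R|_\infty < \frac{192\,p^4 + 96\pi^2\,\sigma(M)/\vol(M)}{224\,p^2} = \frac{6}{7}p^2 + \frac{3\pi^2\sigma(M)}{7\,\vol(M)\,p^2},
\end{equation*}
which is exactly the displayed bound in the corollary (using $192/224 = 6/7$ and $96/224 = 3/7$). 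Thus under this hypothesis Proposition \ref{39} yields $\int_{\bmk}CS^W_5>0$, hence in particular $\int_{[a^L]}CS^W_5\neq 0$, and Proposition \ref{prop:two}(ii) delivers an element of infinite order.

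The step I expect to require the most care is not this arithmetic but making sure the logical chain through \S3.1 is invoked correctly: namely that ``infinite order'' follows from the nonvanishing of a single integral, via the iterate computation $\int_{[a^L_n]}CS^W_5 = n\int_{[a^L]}CS^W_5$ of Proposition \ref{prop:two}(ii), rather than merely ``infinite'' as a set. Since the pairing of $CS^W_5$ with $[a^L_n]$ takes the distinct values $\{n\int_{[a^L]}CS^W_5 : n\in\Z\}$, the classes $[a^D_n]$ are pairwise distinct in $\pi_1(\diff(\bmk))$ and $[a^D]$ itself generates an infinite cyclic subgroup; this is the precise sense in which the element has infinite order. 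The only genuine mathematical content, Proposition \ref{39}, is already established, so the corollary is a formal consequence.
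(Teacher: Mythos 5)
Your proposal is correct and follows exactly the paper's route: the paper's entire proof of this corollary is the one-line citation of Proposition \ref{prop:two}(ii) together with the inequality (\ref{pest}), and your algebraic rearrangement (dividing by $p^2\vol(M)>0$ and solving for $|R|_\infty$, with $192/224=6/7$ and $96/224=3/7$) is the implicit computation the authors leave to the reader. Your added care about ``infinite order'' versus merely ``infinite'' is consistent with the iterate argument in the proof of Proposition \ref{prop:two}(ii), so nothing is missing.
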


\begin{proof}  This follows from Prop.~\ref{prop:two}(ii) and (\ref{pest}).
\end{proof}

\begin{thm}\label{bigthm}  Let $(M^4, J, g, \omega)$ be a compact integral K\"ahler surface, and let $\bmk$ be the circle bundle associated to $p[\omega]\in H^2(M, \Z)$ for $p\in \Z.$  Then
the loop of diffeomorphisms of $\bmk$ given by rotation in the circle fiber gives an element of infinite order in $\pi_1(\diff(\bmk))$  if $|p| \gg 0.$



\end{thm}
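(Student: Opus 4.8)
The plan is to reduce the infinite-order claim to the estimate already established in Corollary~\ref{bigcor} and then show that this estimate holds for all sufficiently large $|p|$. By Proposition~\ref{prop:two}(ii), it suffices to exhibit an action $a$ with $\int_{[a^L]}CS^W_5\neq 0$; the rotation action in the circle fibers is the natural candidate, and Proposition~\ref{39} gives a concrete sufficient condition, namely inequality~(\ref{pest}), for $\int_{\bmk}CS^W_5>0$. So the entire theorem comes down to verifying (\ref{pest}) for $|p|\gg 0$.

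First I would observe that the curvature tensor $R$ of $(M,g)$ is \emph{fixed}: the K\"ahler metric $g$ on the base $M$ does not change as we vary $p\in\Z$, since $p$ only rescales the connection $\be$ and the curvature $d\be=p\omega$ of the circle bundle $L_p$. Consequently $|R|_\infty$, $\sigma(M)$, and $\vol(M)$ are all constants independent of $p$. This is the crucial point that makes the $p$-dependence transparent. Rewriting (\ref{pest}) by dividing through by the positive quantity $p^2\vol(M)$ (for $p\neq 0$), the sufficient condition becomes
\begin{equation*}
192\,p^4 - 224\,|R|_\infty\, p^2 + 96\pi^2\sigma(M) > 0,
\end{equation*}
equivalently the condition displayed in Corollary~\ref{bigcor}, that $|R|_\infty < \frac{6}{7}p^2 + \frac{3\pi^2\sigma(M)}{7\vol(M)\,p^2}$.

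Next I would treat this as an elementary estimate in the single variable $p$. Viewing the left-hand side as a quadratic in $u=p^2$ with positive leading coefficient $192$, the dominant term $192\,u^2$ grows without bound while the remaining terms $-224|R|_\infty u + 96\pi^2\sigma(M)$ are respectively linear and constant in $u$. Hence for $u=p^2$ large enough the expression is positive; concretely, once $p^2 > \tfrac{7}{6}|R|_\infty$ one checks the leading term dominates the (possibly negative) middle term and the sign is forced regardless of the sign of $\sigma(M)$. Thus there exists $p_0$ such that the inequality holds for all $|p|\geq p_0$, which is exactly the assertion $|p|\gg 0$. Applying Corollary~\ref{bigcor} then yields an element of infinite order in $\pi_1(\diff(\bmk))$ for all such $p$.

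I do not expect a serious obstacle here, since the hard analytic and geometric work has already been done in Lemma~\ref{lemapp}, Proposition~\ref{39}, and Corollary~\ref{bigcor}; the theorem is essentially a clean asymptotic consequence of the explicit formula. The only point requiring genuine care is the \emph{uniformity} observation that $|R|_\infty$, $\sigma(M)$, and $\vol(M)$ are genuinely $p$-independent constants of the base geometry, so that the $p^4$ term in (\ref{pest}) eventually dominates. Provided this is stated clearly, the proof is immediate from the preceding results.
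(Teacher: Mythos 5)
Your proposal is correct and follows exactly the paper's own (one-line) argument: the paper likewise deduces the theorem by noting that the estimate (\ref{pest}) holds for $|p|\gg 0$ and then invoking Proposition~\ref{prop:two}(ii). Your additional observation that $|R|_\infty$, $\sigma(M)$, and $\vol(M)$ are $p$-independent, so the $192p^4$ term eventually dominates, is precisely the implicit content of the paper's claim (only note that the threshold $p^2>\tfrac{7}{6}|R|_\infty$ alone does not quite suffice when $\sigma(M)<0$, though the asymptotic conclusion is unaffected).
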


\begin{proof}  The estimate (\ref{pest}) holds for $|p| \gg 0.$  Prop.~\ref{prop:two}(ii) then gives the result.
\end{proof}

\begin{rem} 

(i) The Theorem does not apply to the simplest case $p=0$, where
$\overline{M}_0 = M\times S^1.$   In this case, the projection $p:\overline M_0\to S^1$ induces 
$p_*:\pi_1(\diff(\overline M_0))\to \pi_1(S^1) = \Z$. The fiber rotation maps to a generator of $\pi_1(S^1)$, so we immediately conclude $|\pi_1(\diff(\overline M_0))| = \infty.$


(ii) It is not the case that we can scale the metric on $M$ so that the estimate in Cor.~\ref{bigcor} holds for any fixed $p\neq 0.$ 
For if we scale the metric $g$ on $M$ to $\eta g$, $\eta\in \Z^+$, then $|R|_\infty$ scales to $\eta^{-1}
|R|_\infty$, $\vol(M)$ scales to $\eta^2 \vol(M)$, and $\bmk$ changes to $\overline{M}_{\eta p}.$  Thus for fixed $p$, the estimate in Cor. \ref {bigcor} holds for $\eta \gg 0$, which is just a restatement of 
Thm.~\ref{bigthm}.

(iii) By Prop.~\ref{prop:two}(i), these fiber rotations give examples of actions which are not smoothly homotopic to the trivial action.

\end{rem}

\begin{cor}  Under the hypothesis in Thm. \ref{bigthm}, $\pi_1({\rm Isom}(\bmk))$ is infinite.
\end{cor}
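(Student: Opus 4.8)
The plan is to observe that the distinguished loop of diffeomorphisms from Theorem~\ref{bigthm} in fact consists of \emph{isometries}, and then transfer its infinite order from $\pi_1(\diff(\bmk))$ back to $\pi_1({\rm Isom}(\bmk))$ through the inclusion homomorphism. No new analysis is needed; the entire content is already contained in the Sasakian computations above.

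First I would verify that the fiber rotation $a:S^1\times\bmk\to\bmk$ acts by isometries of $\bg$. This is immediate from the computation carried out in constructing the Sasakian structure: the characteristic vector field $\bxi$ is a Killing field, i.e.\ $\calL_{\bxi}\bg = 0$, and the fiber rotation is precisely the flow of $\bxi$, up to the constant rescaling by the fiber length $\lambda$ (recall $\bg(\bxi,\bxi)=1$, so the unit-speed flow of $\bxi$ traverses the full fiber in time $\lambda$). Consequently each $a(\theta,\cdot)$ is an isometry, and since $a(0,\cdot)=\tid$, the adjoint map $a^D:S^1\to\diff(\bmk)$ factors through the inclusion ${\rm Isom}(\bmk)\hookrightarrow\diff(\bmk)$, defining a based loop $a^I:S^1\to{\rm Isom}(\bmk)$.

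Next I would invoke the homomorphism $\iota_*:\pi_1({\rm Isom}(\bmk),\tid)\to\pi_1(\diff(\bmk),\tid)$ induced by the inclusion $\iota$. By construction $\iota_*[a^I]=[a^D]$, and Theorem~\ref{bigthm} tells us that $[a^D]$ has infinite order for $|p|\gg 0$. A trivial group-theoretic step then finishes the argument: if $[a^I]$ had finite order $n$, then $[a^D]^n=\iota_*([a^I]^n)$ would be the identity of $\pi_1(\diff(\bmk))$, contradicting the infinite order of $[a^D]$. Hence $[a^I]$ has infinite order in $\pi_1({\rm Isom}(\bmk))$, so this group is infinite. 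Here $\pi_1({\rm Isom}(\bmk))$ is unambiguous because, $\bmk$ being a compact Riemannian manifold, ${\rm Isom}(\bmk)$ is a compact Lie group by the Myers--Steenrod theorem.

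I do not expect any genuine obstacle. The only point deserving a word of care is that the map $\iota_*$ on fundamental groups is well defined and carries $[a^I]$ to the \emph{same} class $[a^D]$ used in Theorem~\ref{bigthm}, independently of which model (Hilbert or Fr\'echet) one places on $\diff(\bmk)$; by the homotopy-invariance discussion in Appendix B this causes no difficulty, since $[a^D]$ is the same class in every such model.
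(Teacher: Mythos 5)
Your proposal is correct and follows essentially the same route as the paper: observe that the fiber rotation consists of isometries (the flow of the Killing field $\bxi$), then note that the inclusion-induced map on $\pi_1$ would send a finite-order class to a finite-order class, contradicting Theorem~\ref{bigthm}. The extra remarks on Myers--Steenrod and topology-independence are harmless but not needed.
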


\begin{proof} The loop $\beta$ of fiber rotations consists of isometries.  Under the inclusion $i:{\rm Isom}(\bmk)\to \diff(\bmk)$, if 
$\beta\in\pi_1({\rm Isom}(\bmk))$ has finite order,  then $i_*\beta\in\pi_1(\diff(\bmk))$ has finite order.  This contradicts the hypothesis.
\end{proof}


\subsection{Examples}

For specific K\"ahler surfaces we can precisely determine for which $p$ we have $|\pi_1(\diff(\bmk))| = \infty.$ 

\begin{exm}   $M = T^4$.  For  the flat metric, $|R|_\infty = 0$ and $p_1(\Omega) = 0.$  For the standard 
K\"ahler class, by  (\ref{want}), (\ref{bsumna}), we have
$\int_{\overline{M_p}} CS^W_5  >0$ for $p\neq 0.$  By Prop.~\ref{prop:two}(ii),
 $|\pi_1(\diff(\bmk))| = \infty$ for  $p\neq 0$.  As noted above, this also holds trivially for $p=0.$
\end{exm}

\begin{exm}   $M = \CP^2$.
We use this case to give sharp results for $p$ and to check the constants in Prop.~\ref{39}. 
  The total space  $\overline{\C\mathbb P^2_1}$ of the hyperplane  bundle over $\C\mathbb P^2$ is $S^5$.  As above, the rotational action of the fiber is via isometries and gives an element of 
$\pi_1({\rm Isom}(S^5)) = \pi_1(SO(6)) \simeq \Z_2,$ with generator given by this rotational action.
Thus this element is of order at most
two in $\pi_1(\diff(S^5)).$ 
The line bundle $L_1$ associated to
$\overline{\C\mathbb P^2_1}$ is diffeomorphic to the dual line bundle $L_1^*\simeq L_{-1}$, the
line bundle associated to $\overline{\C\mathbb P^2_{-1}}$,
 by the map $v\mapsto \langle \ ,v\rangle$ for a hermitian metric on $\overline{\C\mathbb P^2_1}$. Thus
  $\overline{\C\mathbb P^2_1}\approx \overline{\C\mathbb P^2_{-1}}$, and the 
 rotational action also has order at most two for $p=-1.$  (In general, $\overline{\CP^2_p} $ is diffeomorphic to $\overline{\CP^2_{-p}}.$)

Thus we must have  $\int_{\overline{ \C\mathbb P^2_{\pm 1}} }CS^W_5=0$, which we verify by computing (\ref{bsumna}) explicitly. Using  the formula for the curvature tensor of $\CP^2$ in \cite[Vol. II, p. 166] {K-N},\footnote{With $c=4$ in their notation, and noting that $R(X,Y,Z,W)$ in \cite{K-N} is the negative of our $R(X,Y,Z,W)$.} a direct computation gives $b_2 = 0, b_3 = b_4 = -192p^2$ in (\ref{b234}) in the Appendix.  
Since $1 = \sigma(\C\mathbb P^2) = \frac{1}{3}\int_{\C\mathbb P^2} p_1$, this gives
\begin{eqnarray} \int_{\overline{\CP^2_1} =  S^5} CS^W_{5}&=&
 \frac{3p^2}{5}   \left( 64\pi^3\int_{\CP^2} p_1(\Omega) -384p^2 {\rm vol}(S^5) 
+ 192p^4 {\rm vol}(S^5) \right)\notag \\
&=&  \frac{3p^2}{5}   
 \left(192\pi^3 -384p^2 \pi^3 + 192p^4 \pi^3\right)\notag \\
&=&\frac{586p^2\pi^3}{5}   (p^2-1)^2, \label{eqn:CP2}
\end{eqnarray}
which vanishes iff $p= \pm 1$.  Note that the WCS form vanishes pointwise at $p=\pm 1.$

In particular, for any K\"ahler form $\omega = \omega^{FS} + \partial\bar\partial\log f$
in the same cohomology class as the Fubini-Study form $\omega^{FS},$ the curvature estimate in Cor. \ref{bigcor}  fails at $p=1.$ This yields a lower bound for $|R|_\infty$.   Since vol$(\CP^2) = \pi^2/2$ independent of $\omega$, we have
$$|R|_\infty \geq 12/7$$
for any such $\omega.$
\begin{prop}  (i) For $p = \pm 1,$ the element of $\pi_1(\diff(\overline{\CP^2_p}))$ given by rotation in the fiber 
of $\overline{\CP^2_p}\to\CP^2$ has order at most two.  For $p\neq \pm 1$, this element has infinite order.

(ii) For $|p|\neq |\ell|$, $\overline{\CP^2_p}$ is not diffeomorphic to $\overline{\CP^2_\ell}$.

(iii) $\overline{\CP^2_p}$ is diffeomorphic to the lens space $\calL_p = S^5/\Z_p$, where $z\sim
e^{2\pi i/p}z$ for $z\in S^5.$
\end{prop}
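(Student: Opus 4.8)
The plan is to treat the parts in the order (iii), (ii), (i): the identification in (iii) makes (ii) immediate and isolates the one analytic input, the computation (\ref{eqn:CP2}), to a single case of (i). For (iii), I would start from the fact recalled above that $\overline{\CP^2_1} = S^5$, so the Hopf fibration $S^5\to\CP^2$ is the principal $S^1$-bundle with Euler class the generator $x$ of $H^2(\CP^2,\Z)\cong\Z$. The diagonal action $z\mapsto e^{2\pi i/p}z$ on $S^5\subset\C^3$ is the restriction to $\Z_p\subset S^1$ of the Hopf action, so $S^5/\Z_p$ is again a principal bundle over $\CP^2$ for $S^1/\Z_p\cong S^1$. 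The key computation is that this $p$-fold fiber quotient multiplies the Euler class by $p$: normalizing a connection $1$-form $\alpha$ on $S^5$ by $\alpha(\partial_t)=1$ on the fiber and writing $s=pt$ for the fiber coordinate downstairs, the induced connection pulls back to $p\alpha$, so its curvature, and hence the Euler class, is $p$ times that of $S^5$, giving $e(S^5/\Z_p)=px$. Since circle bundles over $\CP^2$ are classified by their Euler class in $H^2(\CP^2,\Z)=\Z$, and $\overline{\CP^2_p}$ is by construction the bundle with Euler class $p[\omega]=px$, we conclude $\overline{\CP^2_p}\cong S^5/\Z_p=\calL_p$.

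Part (ii) then follows from fundamental groups: since $S^5$ is simply connected and $\Z_p$ acts freely, $\pi_1(\calL_p)=\Z/|p|$ for $p\neq 0$, while $\overline{\CP^2_0}=\CP^2\times S^1$ has $\pi_1=\Z$. Hence $|p|\neq|\ell|$ forces nonisomorphic fundamental groups, so the total spaces are not diffeomorphic (indeed not homotopy equivalent). Equivalently, one may avoid (iii) and read this off the Gysin sequence of $\overline{\CP^2_p}\to\CP^2$, in which cup product with the Euler class $px$ is multiplication by $p$ on $H^*(\CP^2;\Z)=\Z[x]/(x^3)$, yielding $H^2(\overline{\CP^2_p};\Z)=\Z/|p|$.

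For (i), the cases $p=\pm1$ are reduced above to the observations that $\overline{\CP^2_{\pm1}}=S^5$ and that the fiber rotation is the Hopf circle, the generator of $\pi_1({\rm Isom}(S^5))=\pi_1(SO(6))\cong\Z_2$; under ${\rm Isom}(S^5)\hookrightarrow\diff(S^5)$ its image has order at most two. For $p\neq 0,\pm1$, equation (\ref{eqn:CP2}) gives $\int_{\overline{\CP^2_p}}CS^W_5\neq 0$, so Proposition~\ref{prop:two}(ii) applies; more precisely, its proof shows that the $n$-th iterate of the fiber rotation satisfies $\int_{[a^L_n]}CS^W_5=n\int_{[a^L]}CS^W_5$, whence the classes $[a^D_n]=n[a^D]$ are pairwise distinct and the fiber rotation $[a^D]$ has infinite order. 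For the remaining case $p=0$, projection of $\overline{\CP^2_0}=\CP^2\times S^1$ onto the $S^1$ factor induces $\pi_1(\diff(\overline{\CP^2_0}))\to\pi_1(S^1)=\Z$ sending the fiber rotation to a generator, so it again has infinite order.

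The main obstacle is the Euler-class computation under the $\Z_p$-quotient in (iii), since that is the step carrying the geometric content, and the sign and normalization conventions (matching the power and orientation of the Hopf bundle to $p[\omega]=px$) must be handled with care. Once the identification $\overline{\CP^2_p}\cong\calL_p$ is established, (ii) is purely fundamental-group bookkeeping, and (i) rests only on the already-completed computation (\ref{eqn:CP2}) together with the standard topology of $S^5$ and $SO(6)$.
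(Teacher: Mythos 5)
Your proposal is correct, and it reuses the same two essential inputs as the paper: the identification of $\overline{\CP^2_{\pm 1}}$ with $S^5$ together with $\pi_1({\rm Isom}(S^5))=\pi_1(SO(6))\simeq\Z_2$ for the $p=\pm1$ case of (i), and the computation (\ref{eqn:CP2}) combined with Proposition \ref{prop:two}(ii) for $p\neq 0,\pm1$. The differences are in the supporting parts. For (iii), the paper works with local trivializations $\phi_i^p=e_p\circ\phi_i$ of $\calL_p$ and compares \v Cech representatives of $c_1$, whereas you compute the Euler class of the fiberwise $\Z_p$-quotient by rescaling a connection $1$-form; both correctly give $e(S^5/\Z_p)=p\,x$ and then invoke the classification of circle bundles over $\CP^2$ by $H^2(\CP^2,\Z)$. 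For (ii), you deduce the statement from (iii) via $\pi_1(\calL_p)=\Z/|p|$; the paper explicitly notes that (iii) implies (ii) but nevertheless proves (ii) directly from the Gysin sequence, computing $H^4(\overline{\CP^2_p},\Z)\simeq\Z/|p|\Z$, because that cohomological computation is reused verbatim for the $S^2\times S^2$ and K3 families later in \S3.3, where no lens-space model is available. Your route is slightly shorter for $\CP^2$ alone but does not generalize in that way. Finally, you treat $p=0$ explicitly via the projection $\CP^2\times S^1\to S^1$; the paper relegates this to the remark following Theorem \ref{bigthm}, and your inclusion of it is appropriate since (\ref{eqn:CP2}) also vanishes at $p=0$.
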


\begin{proof} Part (i)  follows from  Prop.~\ref{prop:two}(ii), (\ref{want}), and (\ref{eqn:CP2}). 

(iii) implies (ii), but for later purposes we prove (ii) directly. For (ii),  the Euler class of the fibration $S^1\to \overline{\CP^2_p}\to \CP^2$ is $p$ times the generator $[\omega = \omega^{FS}]$ of
$H_2(\CP^2,\Z).$ As in 
\cite[Appendix A]{gdsw}, one piece of  
the Gysin sequence of this fibration is
$$0\to H^3(\bmk)\to \Z \stackrel{\cup p[\omega]}{\to} \Z \to H^4(\bmk)\to 0,$$
which implies
$
 H^4(\overline{\CP^2_p},\Z)\simeq \Z/|p|\Z$ for all $p$.  This gives (ii).
 
 For (iii), let $\phi_i: S^5 |_{U_i}= L_1|_{U_i}\to U_i\times S^1$ be local trivializations of $L_1$.  
 Set $e_p:S^1\to S^1, z\mapsto z^p$.
 The $\Z_p$ action preserves the fibers of $L_1$, so 
 the maps $\phi_i^p = e_p\circ\phi_i:\calL_p|_{U_i}\to U_i\times S^1$, 
 are local trivializations of $\calL_p$ as a circle bundle over $\CP^2.$

The
  first Chern class $c_1(L_1)$ is a  generator of 
 $H^2(\CP^2)$, and has the \v Cech representative
 $(2\pi i )^{-1}(\ln \phi_{ij} + \ln \phi_{jk} + \ln \phi_{ki}),$  where $\phi_{ij} = \phi_i\circ\phi_j^{-1}.$  
 Thus $c_1(\calL_p) = p c_1(L_1)= c_1(\overline{\CP^2_p}).$  Therefore $\calL_p\simeq \overline{\CP^2_p}$ as line bundles, from which it follows that they are diffeomorphic.

\end{proof}
\end{exm}

\begin{exm} \label{2p1} $M = \CP^1\times\CP^1$.  As for $\CP^2$, we can compute explicitly the Chern-Simons class. Let $\omega_1, \omega_2$ be the standard 
K\"ahler form on each $\CP^1$ of sectional curvature $1$.  Let $R$ be the curvature of the K\"ahler metric for the 
K\"ahler form $\omega = a\omega_1+b\omega_2$ for $a,b>0$.
For $\{e_2, Je_2\}$ and
$\{e_3, Je_3\}$ orthonormal frames for the first and second $\CP^1$, we have
\begin{eqnarray*}R(e_2,J e_2, e_3, Je_3) &=& R(e_2, e_3, e_2, e_3) = R(e_2, Je_3, e_2, Je_3) = 0,\\ 
R(e_2, Je_2, e_2, Je_2) &=& -a^{-1},\ R(e_3, Je_3, e_3, Je_3) = -b^{-1}.
\end{eqnarray*}
For  $a,b\in\Z^+$,  $\omega = a\omega_1+b\omega_2$ is an integral K\"ahler form. 
Let $\mabp$ be the total space of the line bundle associated to $p\omega.$ 
Since $\sigma(M) = 0$, the integral of (\ref{bsumna}) becomes
$$  \frac{3p^2}{5}   \int_{\mabp} (-32p^2(-a^{-1}-b^{-1}) + 192p^4)\dvol,$$
which is positive for $p\neq 0.$

This produces a new infinite family of five manifolds with $\pi_1(\diff(\mabp))$ infinite.

\begin{prop} \label{ntl} (i) Let $M = \CP^1\times \CP^1 = S^2\times S^2$. For  $a, b\in \Z^+ ,p \in\Z$, the element of $\pi_1(\diff(\mabp))$ given by rotation in the fiber 
of $\mabp\to M$ has infinite order.

(ii) For ${\rm gcd}(a,b)\neq {\rm gcd}(c,d)$, $\mab$ is not diffeomorphic to $\overline{M}_{(c,d)}.$ Furthermore, $\mab$ is not diffeomorphic to $\overline{\CP^2_p}$ for
any $p$.
\end{prop}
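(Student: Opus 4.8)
The plan is to prove the two assertions separately: part (i) falls out immediately from the curvature integral just computed together with Proposition~\ref{prop:two}(ii), while part (ii) is a cohomological circle-bundle classification. For part (i) with $p\neq 0$, the displayed evaluation of (\ref{bsumna}) gives $\int_{\mabp}CS^W_5=\frac{3p^2}{5}\int_{\mabp}\bigl(-32p^2(-a^{-1}-b^{-1})+192p^4\bigr)\,\dvol$, which since $a,b>0$ is strictly positive for $p\neq 0$. Then Proposition~\ref{prop:two}(ii) applies: the iterates satisfy $\int_{[a^L_n]}CS^W_5=n\int_{[a^L]}CS^W_5\neq 0$ and so are pairwise distinct, whence the fiber rotation $[a^D]$ has infinite order in $\pi_1(\diff(\mabp))$. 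The leftover case $p=0$ is not reached by this integral, since there $\mabp=(S^2\times S^2)\times S^1$; here I would argue as in Remark~(i) after Theorem~\ref{bigthm}, composing the evaluation map $\mathrm{ev}_{x_0}\colon\diff(\mabp)\to\mabp$ at a basepoint $x_0$ with the projection $\mabp\to S^1$, and noting that the fiber rotation is sent to a generator of $\pi_1(S^1)=\mathbb{Z}$; an element mapping to a generator of $\mathbb{Z}$ under a homomorphism has infinite order.

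For the first claim of part (ii), I would extract the diffeomorphism invariant from $H^2(\,\cdot\,;\mathbb{Z})$ via the Gysin sequence of the circle bundle, exactly as was carried out over $\CP^2$ using \cite[Appendix A]{gdsw}. Because $H^1(S^2\times S^2)=0$, the relevant segment collapses to $H^2(\mab;\mathbb{Z})\cong \mathrm{coker}\bigl(H^0(S^2\times S^2)\xrightarrow{\cup e}H^2(S^2\times S^2)\bigr)$, where $e=a\omega_1+b\omega_2$ is the Euler class. Identifying $H^2(S^2\times S^2;\mathbb{Z})\cong\mathbb{Z}^2$, this cokernel is $\mathbb{Z}^2/\langle(a,b)\rangle\cong\mathbb{Z}\oplus\mathbb{Z}/\gcd(a,b)$, so the torsion subgroup of $H^2(\mab;\mathbb{Z})$ is precisely $\mathbb{Z}/\gcd(a,b)$. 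This is a diffeomorphism invariant, so $\gcd(a,b)\neq\gcd(c,d)$ forces $\mab\not\cong\overline{M}_{(c,d)}$.

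For the comparison with $\overline{\CP^2_p}$, I would split on whether $p=0$. For $p\neq 0$, the analogous Gysin computation over $\CP^2$ yields $H^2(\overline{\CP^2_p};\mathbb{Q})=0$, so $b_2(\overline{\CP^2_p})=0$, whereas the free summand found above gives $b_2(\mab)=1$; since the second Betti number is a homotopy invariant, the two cannot be diffeomorphic (this even covers the borderline $|p|=\gcd(a,b)$, where $\pi_1$ alone would coincide). For $p=0$ we have $\overline{\CP^2_0}=\CP^2\times S^1$ with $\pi_1=\mathbb{Z}$, while the homotopy exact sequence of $S^1\to\mab\to S^2\times S^2$ gives $\pi_1(\mab)\cong\mathrm{coker}\bigl(\pi_2(S^2\times S^2)\xrightarrow{\partial}\pi_1(S^1)\bigr)=\mathbb{Z}/\gcd(a,b)$, which is finite; so again they are not diffeomorphic.

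All the genuine content lives in part (ii). The first subtle point is recognizing that the invariant read off from the bundle is $\gcd(a,b)$ rather than the ordered pair $(a,b)$, reflecting that cohomology detects only the divisibility of the Euler class. The point I expect to be the main obstacle to a single clean argument is the degenerate case $p=0$ in the comparison with $\overline{\CP^2_p}$: there the second Betti numbers of $\mab$ and $\CP^2\times S^1$ agree, so the cohomological obstruction vanishes and one is forced to switch invariants, separating them instead by finiteness of $\pi_1$ (finite $\mathbb{Z}/\gcd(a,b)$ versus $\mathbb{Z}$).
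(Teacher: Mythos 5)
Your proposal is correct and follows the same basic strategy as the paper: part (i) is the positivity of the curvature integral plus Proposition~\ref{prop:two}(ii) (with the $p=0$ case handled by projecting to the $S^1$ factor, as in the remark after Theorem~\ref{bigthm}), and part (ii) is the Gysin sequence of the circle fibration. The differences are in which invariants you read off. For the first claim of (ii) the paper computes $H^4(\mab;\Z)\simeq \Z_{\gcd(a,b)}$ as the cokernel of $\cup(a\omega_1+b\omega_2):H^2(M)\to H^4(M)$, whereas you compute the torsion $\Z/\gcd(a,b)$ of $H^2(\mab;\Z)$ from the segment ending in $H^1(M)=0$; these are equivalent and equally clean. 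For the comparison with $\overline{\CP^2_p}$ the paper uses $H^3(\mab)\simeq\Z$ versus $H^3(\overline{\CP^2_p})\simeq 0$, while you use $b_2$ together with a $\pi_1$ argument. Here your extra case analysis at $p=0$ is actually a substantive improvement: the paper's assertion $H^3(\overline{\CP^2_p})\simeq 0$ comes from the injectivity of $\cup\,p\omega^{FS}$ on $H^2(\CP^2)$ and therefore fails precisely at $p=0$, where $\overline{\CP^2_0}=\CP^2\times S^1$ has $H^3\simeq\Z$, matching $H^3(\mab)$; your switch to finiteness of $\pi_1$ (namely $\pi_1(\mab)\simeq\Z/\gcd(a,b)$ versus $\pi_1(\CP^2\times S^1)\simeq\Z$) correctly closes this degenerate case, which the statement's ``for any $p$'' does include.
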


\begin{proof} (i) is proven as in the previous examples.

 (ii) Part of the Gysin sequence with $\Z$ coefficients is
$$ H^3(M)\to H^3(\mab)\stackrel{\beta}{\to} H^2(M)\stackrel{\alpha}{\to} H^4(M)\to
H^4\mab)\to H^3(M),$$
where $\alpha = \cup (a[\omega_1] + b[\omega_2])$. This reduces to 
$$0\to H^3(\mab)\stackrel{\beta}{\to}\Z\oplus \Z \stackrel{\alpha}{\to} \Z \to H^4(\mab) \to 0.$$
Since $\alpha(x,y) = ax+by$, the image of $\alpha$ is gcd$(a,b)\Z$.  Thus $H^4(\mab)\simeq \Z_{{\rm gcd}(a,b)}$, which gives the first statement in (ii).  

Since Im$(\beta)\simeq {\rm Ker}(\alpha)\simeq\{(bx,-ax):x\in\Z\}\simeq \Z$, we have $H^3(\mab)\simeq \Z.$  The corresponding part of the Gysin sequence for $\overline{CP^2_p}$ yields $H^3(\overline{CP_p^2}) \simeq 0$, which gives the second statement in (ii).
\end{proof}

\end{exm}

\begin{exm}  Let  $M$ be a compact projective K\"ahler surface with a Ricci flat metric, i.e. a compact projective K3 surface.
We orient $M$ with the opposite of the orientation coming from the complex structure. Let
$R:\Lambda^2(M)\to\Lambda^2(M)$ be the curvature operator. According to the decomposition $\Lambda^2(M) = \lp \oplus \lm$ into $\pm 1$ eigenspaces of the star operator, we have

\centerline{\hskip .02 in $\lp$\hskip .25in $\lm$}
\vskip -.1 in
$$R =  \left(\begin{array}{cc}\rpp& R_{+-}\\ R_{-+}&\rmm\\
\end{array}\right)
\begin{array}{c} {\lp}\\
\lm\end{array}$$
\medskip

\noindent   $\Tr(\rpp) + \Tr(\rmm)= 4r = 0$, where $r$ is the scalar curvature. 
 $R_{+-}$ and $R_{-+}$ are determined by the traceless Ricci tensor, and so 
vanish.  

Let $(e_2, Je_2, e_3, Je_3)$ be a local orthonormal frame for $TM$ with $*e_2 = -Je_2\wedge e_3\wedge Je_3$, etc.; the minus sign reflects the change in orientation.  Then a basis of $\lp$ is 
$$\{f_1^+ = e_2\wedge Je_2 - e_3\wedge Je_3, f_2^+ = e_2\wedge e_3 + Je_2\wedge Je_3, 
f_3^+ = e_2\wedge Je_3 - Je_2\wedge e_3\},$$
and a basis of $\lm$ is 
$$\{f_1^- = e_2\wedge Je_2 + e_3\wedge Je_3, f_2^- = e_2\wedge e_3 - Je_2\wedge Je_3,
 f_3 ^-= e_2\wedge Je_3 + Je_2\wedge e_3\}.$$
Since $\rpp, \rmm$ are symmetric, there are bases $\{\omega^+_1, \omega^+_2,\omega^+_3\}$ of $\lp$ 
and $\{\omega^-_1, \omega^-_2,\omega^-_3\}$ of $\lm$ for which
$\rpp,\rmm$ diagonalize:
\begin{eqnarray*}\rpp &=& \lambda^+_1\omega^1_+\otimes \omega^+_1 + \lambda^+_2\omega_+^2\otimes \omega^+_2
+  \lambda^+_3\omega_+^3\otimes \omega^+_3\\
\rmm &=& \lambda^-_1\omega^1_-\otimes \omega^-_1 + \lambda^-_2\omega_-^2\otimes \omega^-_2
+  \lambda^-_3\omega_-^3\otimes \omega^-_3,
\end{eqnarray*}
for $\omega_{\pm}^i = (\omega_i^{\pm})^*.$
Thus 
\begin{equation}\label{lsum} \sum_{i=1}^3 \lambda_i^+ +  \sum_{i=1}^3 \lambda_i^-=0.
\end{equation}

We claim that the curvature terms in (\ref{bsumna}) add to zero. By the Bianchi identity, the first curvature term is
\begin{eqnarray*} 3R(e_2,Je_2,e_3,Je_3) &=& -3R(e_2,e_3,Je_3,Je_2) -3R(e_2,Je_3, Je_2, e_3)\\
&=& 3R(e_2,e_3,e_2,e_3) + 3R(e_2,Je_3, e_2, Je_3).
\end{eqnarray*}
Thus the curvature terms in (\ref{bsumna}) become
\begin{equation}\label{rew}
2R(e_2,e_3,e_2,e_3) + 2 R(e_2,Je_3, e_2, Je_3) + R(e_2, Je_2, e_2, Je_2) + R(e_3, Je_3, e_3,
Je_3).
\end{equation}
To compute these terms, we note that
e.g. $R(e_2, Je_2, e_3, Je_3) = -\langle R(e_2\wedge Je_2), e_3\wedge Je_3\rangle.$ 
Write $ f^+_j = b_j^i\omega^+_i, f^-_j = d_j^i\omega_i^-.$
After normalizing $\{\omega^\pm_i\}, \{f_j^\pm\}$, we may assume that $(b_j^i), (d_j^i)$ are orthonormal. 
Then
\begin{eqnarray*} 
R(e_2, e_3, e_2, e_3) &=& -\langle R(e_2\wedge e_3), e_2\wedge e_3\rangle\\
& =&
-\left \langle \rpp\left(\frac{f_2^++ f_2^-}{2}\right), \frac{f_2^+ + f_2^-}{2}\right\rangle
-\left \langle \rmm\left(\frac{f_2^++ f_2^-}{2}\right), \frac{f_2^+ + f_2^-}{2}\right\rangle\\
&=& - \frac{1}{4} \langle \rpp f_2^+, f_2^+\rangle - \frac{1}{4} \langle \rmm f_2^-, f_2^-\rangle\\
&=& -\frac{1}{4}[\lambda_1^+(b^1_2)^2+\lambda_2^+(b_2^2)^2 + \lambda_3^+(b_3^3)^2
+  \lambda_1^-(d^1_2)^2+\lambda_2^-(d_2^2)^2 + \lambda_3^-(d_3^3)^2].
\end{eqnarray*}
Similarly computing the other terms in (\ref{rew}), we get 
\begin{eqnarray*}
\lefteqn{2R(e_2,e_3,e_2,e_3) + 2 R(e_2,Je_3, e_2, Je_3) + R(e_2, Je_2, e_2, Je_2) 
+ R(e_3, Je_3, e_3,Je_3)}\\
&=& -\frac{1}{2}[\lambda_1^+(b^1_2)^2+\lambda_2^+(b_2^2)^* + \lambda_3^+(b_2^3)^2
+  \lambda_1^-(d^1_2)^2+\lambda_2^-(d_2^2)^* + \lambda_3^-(d_2^3)^2]\\
&& -\frac{1}{2}[\lambda_1^+(b^1_3)^2+\lambda_2^+(b_3^2)^* + \lambda_3^+(b_3^3)^2
+  \lambda_1^-(d^1_3)^2+\lambda_2^-(d_3^2)^* + \lambda_3^-(d_3^3)^2]\\
&& -\frac{1}{4}[\lambda_1^+(b^1_1)^2+\lambda_2^+(b_1^2)^* + \lambda_3^+(b_1^3)^2
+  \lambda_1^-(d^1_1)^2+\lambda_2^-(d_1^2)^* + \lambda_3^-(d_1^3)^2]\\
&& -\frac{1}{4}[\lambda_1^+(b^1_1)^2+\lambda_2^+(b_1^2)^* + \lambda_3^+(b_1^3)^2
+  \lambda_1^-(d^1_1)^2+\lambda_2^-(d_1^2)^* + \lambda_3^-(d_1^3)^2]\\
&=& -\frac{1}{2}[\lambda^+_1 + \lambda^+_2 + \lambda^+_2 + \lambda^-_1 + \lambda^-_2
 + \lambda^-_3]\\
&=& 0.
\end{eqnarray*}
Since $p_1(\Omega) = \frac{1}{4\pi^2}(|W_+|^2 - |W_-|^2) = \frac{1}{4\pi^2}|W_+|^2 \geq 0$, 
$\sigma(M) \geq 0.$  (In fact, $\sigma(M) = 16$ in this orientation.) Thus $\int_{\bmk}CS^W_5 \geq 192p^6 >0$ for $p\neq 0.$

\begin{rem}  By \cite{lebrun2}, the
vanishing of scalar curvature on a K\"ahler surface implies the metric is self-dual (after a change of orientation): for the decomposition of the Weyl tensor $W = W_+ + W_-,$ we have $W_-=0$.  Therefore all the pieces in the decomposition of $\rmm$ into projected traceless Ricci, $W_-$ and $r$ vanish,
so $\rmm = 0.$  Thus the calculation above can be somewhat shortened.
\end{rem} 

This result provides another infinite family of examples. To introduce the notation, recall that
$H^2(M;\Z) \simeq \Z^{22}.$ Fix an integral K\"ahler class
$[\omega] = [\omega_1,\ldots,\omega_{22}]$ in the obvious notation and take $a_1,\ldots a_{22}\in \Z^+
\setminus \{0\}.$ For $p\in \Z$, let $\bpa$ be the total space of the line bundle associated to 
$p\sum_{i=1}^{22} a_i\omega_i.$  

\begin{prop}  Let $M$ be a compact projective algebraic K3 surface.

(i) The element of $\pi_1(\diff(\bpa))$ given by rotation in the fiber 
of $\bpa\to M$ has infinite order. 

(ii) For $p=1$, there are infinitely many nondiffeomorphic $\bpa.$  None of these examples are diffeomorphic
 to $\overline{\CP^2_\ell}$ or to the manifolds $\mab$ in Proposition \ref{ntl}.

\end{prop}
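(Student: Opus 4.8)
The plan is to derive (i) directly from the scalar-flat curvature computation of the preceding K3 example, and (ii) purely topologically from the Gysin sequence, in the spirit of the $\overline{\CP^2_p}$ and $\mab$ cases. For (i), I would fix $p\neq 0$ and invoke Yau's solution of the Calabi conjecture: since $c_1(M)=0$ for a K3 surface, the K\"ahler class $[\omega]=\sum_i a_i\omega_i$ contains a unique Ricci-flat K\"ahler metric $g$. Then $\bpa$, the circle bundle with Euler class $p[\omega]$, carries the Sasakian metric $\bg$ of \S3.2 built from $g$, and the computation already performed applies verbatim: the curvature terms in (\ref{bsumna}) cancel and $\int_M p_1(\Omega)=3\sigma(M)\geq 0$, so $\int_{\bpa}CS^W_5\geq 192p^6>0$. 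Prop.~\ref{prop:two}(ii) then forces the fiber-rotation class to have infinite order in $\pi_1(\diff(\bpa))$; the excluded product case $p=0$ is covered by the projection argument in the Remark following Thm.~\ref{bigthm}.

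For (ii), I would run the Gysin sequence of $S^1\to\bpa\to M$ with Euler class $e=p\sum_i a_i\omega_i$, using $H^0=H^4=\Z$, $H^1=H^3=0$, $H^2\cong\Z^{22}$ for K3. The segment $H^2(M)\xrightarrow{\cup e}H^4(M)\to H^4(\bpa)\to H^3(M)=0$ gives $H^4(\bpa)\cong\Z/d\Z$, where $d$ is the divisibility of $e$; here one uses that the K3 intersection form is unimodular, so the image of $\alpha\mapsto\langle\alpha,e\rangle$ is exactly $d\Z$. Taking $p=1$ and $\vec a=N\vec a_0$ for a fixed admissible $\vec a_0$ and $N=1,2,3,\dots$ replaces $e$ by $Ne$ and hence $d$ by $Nd$, so $|H^4(\bpa)|$ attains infinitely many values. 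Since $H^4$ is a diffeomorphism invariant, this produces infinitely many pairwise nondiffeomorphic $\bpa$.

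To separate these from the earlier families, I would compare third Betti numbers. The segment $0=H^1(M)\to H^3(M)=0\to H^3(\bpa)\to H^2(M)\xrightarrow{\cup e}H^4(M)$ identifies $H^3(\bpa)\cong\ker(\cup e)=e^\perp$, a free abelian group of rank $22-1=21$ (as $e\neq 0$ and the form is nondegenerate). Hence $b_3(\bpa)=21$, whereas $b_3(\overline{\CP^2_\ell})=0$ from the $\CP^2$ example and $b_3(\mab)=1$ from Prop.~\ref{ntl}; since $21\notin\{0,1\}$, no $\bpa$ can be diffeomorphic to any $\overline{\CP^2_\ell}$ or $\mab$.

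The step I expect to need the most care is the divisibility bookkeeping in (ii): one must check that ranging $\vec a$ over positive integer vectors really realizes infinitely many divisibilities $d$ while keeping $\sum_i a_i\omega_i$ K\"ahler (so that (i) persists for each example), and that $\cup e$ has image precisely $d\Z$. Both points rest on the unimodularity of the K3 lattice and the scaling invariance of the K\"ahler cone, so the difficulty is conceptual rather than computational; the remaining Gysin identifications and Betti-number comparisons are routine.
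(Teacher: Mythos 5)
Your proposal is correct and follows essentially the same route as the paper: part (i) reduces to the positivity $\int_{\bpa}CS^W_5\geq 192p^6>0$ from the Ricci-flat K3 computation together with Prop.~\ref{prop:two}(ii), and part (ii) is the Gysin-sequence computation of $H^4(\bpa)$ (distinguishing the family via the divisibility/gcd of the Euler class) and of $b_3(\bpa)=21$ versus $b_3=0$ or $1$ for $\overline{\CP^2_\ell}$ and $\mab$. Your explicit appeals to Yau's theorem (to place a Ricci-flat metric in the chosen K\"ahler class) and to unimodularity of the K3 lattice are welcome refinements of points the paper leaves implicit, but they do not change the argument.
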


\begin{proof}   (i)  This follows from $\int_{\bmk}CS^W_5 >0$ as before.

(ii) 
 As before, $H^4(\ba)\simeq \Z_{{\rm gcd}(a_1,\ldots,a_{22})},$ so infinitely many of the $\ba$ are 
 nondiffeomorphic.    Similarly, $H^4(\bpa)\simeq \Z_{{\rm gcd}(pa_1,\ldots,pa_{22})},$ which easily 
 implies that $H^3(\bpa,\R) \simeq \R^{21}.$  This gives the last statement in (ii).

\end{proof}
\end{exm}

\begin{exm} $S^2\times S^3$.
We now apply a variant of these methods to a Sasaki-Einstein metric on $S^2\times S^3$
constructed in \cite{gdsw} to prove the following:

\begin{prop}  $\pi_1(\diff(S^2\times S^3))$ is infinite.
\end{prop}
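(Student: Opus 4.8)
The goal is to produce a smooth $S^1$-action $a$ on $S^2\times S^3$, equipped with the Sasaki--Einstein metric $\bg$ of \cite{gdsw}, for which $\int_{[a^L]}CS^W_5\neq 0$; by Prop.~\ref{prop:two}(ii) this forces $\pi_1(\diff(S^2\times S^3))$ to be infinite, with the loop of fiber rotations of infinite order. The plan is to take for $a$ the circle action generated by the Reeb field $\bxi$ of a \emph{quasi-regular} member of the \cite{gdsw} family $Y^{p,q}$ (here $p,q$ are the labels of \cite{gdsw}, not the bundle multiplicity of \S\ref{dimfive}), chosen so that the Reeb flow closes up to a genuine $S^1$-action exhibiting $S^2\times S^3$ as a Seifert circle bundle over a K\"ahler--Einstein orbifold with $\bxi$ the infinitesimal fiber rotation. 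Since $a$ acts by isometries of $\bg$, the reduction in (\ref{want}) applies verbatim, giving $\int_{[a^L]}CS^W_5=\int_{S^2\times S^3}a^*CS^W_5$, the integral of a single $S^1$-invariant pointwise curvature density.

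The computation is a direct application of Theorem~\ref{thm:5.5}(ii). Setting $\dg=\bxi$ and feeding the Riemann curvature $\br$ of $\bg$ into (\ref{csg}) (equivalently, starting from the expansion (\ref{exp}) that produced Lemma~\ref{lemapp}), I would carry out the antisymmetrized trace over permutations in (\ref{csg}) and the fiber integral $\ints$, then integrate the resulting density over $S^2\times S^3$. The one genuinely new ingredient compared to \S\ref{dimfive} is that $\br$ is now the curvature of the explicit cohomogeneity-one Einstein metric of \cite{gdsw} rather than the Sasakian metric coming from geometric quantization, so the reduction via Proposition~\ref{3.7} is replaced by direct substitution of the explicit $\br$; the Einstein condition $\ric=4\bg$ together with the Sasakian identities (\ref{sub1})--(\ref{sub3}) cuts down the independent components. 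Since every step is algebraic in the (explicit but cumbersome) curvature components, the honest way to produce the value is a computer-algebra evaluation.

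Granting that the evaluation returns an explicit nonzero number for $\int_{[a^L]}CS^W_5$, Prop.~\ref{prop:two}(ii) immediately yields that $\pi_1(\diff(S^2\times S^3))$ is infinite; this also reconfirms by an independent route the instance of Example~\ref{2p1} with $S^2\times S^3=\overline M_{(a,b)}$, $\gcd(a,b)=1$. The main obstacle is the nonvanishing itself: the curvature of the \cite{gdsw} metric is complicated enough that the trace-and-integrate step is only feasible symbolically, and the real risk is an accidental cancellation forcing the integral to vanish, exactly as occurs for $S^5=\overline{\CP^2_{\pm 1}}$ in (\ref{eqn:CP2}). Thus the substance of the argument is the \emph{certification} that the computed number is nonzero, not the mere setting-up of the integral. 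A secondary point to pin down is that $a$ is a bona fide smooth $S^1$-action whose orbit class $[a^L]=a^L_*[S^2\times S^3]$ is the one paired with $CS^W_5$; this is what dictates choosing a quasi-regular representative (or arguing instead from the toric $T^3$-symmetry of $\bg$).
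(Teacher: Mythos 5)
Your proposal follows essentially the same route as the paper: equip $S^2\times S^3$ with an explicit Sasaki--Einstein metric $Y^{p,q}$ from \cite{gdsw}, choose an isometric circle action $a$, reduce $\int_{[a^L]}CS^W_5$ to a pointwise curvature density via (\ref{csg}) and (\ref{want}), evaluate it by computer algebra, and invoke Prop.~\ref{prop:two}(ii). Two points of comparison. First, you take $a$ to be the Reeb circle action of a quasi-regular $Y^{p,q}$ (Seifert fibration over a K\"ahler--Einstein orbifold), whereas the paper rotates the $\alpha$-fiber of the genuine $S^1$-fibration $Y^{p,q}\to S^2\times S^2$; these are different actions and hence different cycles $[a^L]$, so your computation would not literally be the paper's, though the selection criterion coincides --- quasi-regularity of $Y^{p,q}$ is exactly the condition $4p^2-3q^2=n^2$ that the paper imposes to make the answer a rational multiple of $\pi^4$. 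The paper's $\alpha$-rotation is arguably the safer choice, since it is manifestly a free smooth $S^1$-action and the pullback $a^{L,*}CS^W_5$ is computed directly in the dense coordinate chart via (\ref{f}). Second, and more importantly, you correctly identify that the entire substance is the certification that the number is nonzero, but you do not supply it; the paper does, reporting the exact value $\int_{[a^L]}CS_5^W(g)=-1849\pi^4/37750$ for $(p,q)=(7,3)$ from a Mathematica evaluation. Without that (or an equivalent computation for your Reeb action), the argument remains conditional --- and the vanishing at $S^5=\overline{\CP^2_{\pm1}}$ in (\ref{eqn:CP2}) and at $a=1$ in the $Y^{p,q}$ family shows that accidental cancellation is a live possibility, not a formality.
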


\begin{proof}
For $a\in (0,1]$, the  metric 
\begin{eqnarray}\label{metric} g &=&\frac{1-y}{6}(d\theta^2 + \sin^2\theta d\phi^2) + 
\frac{1}{w(y)q(y)} dy^2 + \frac{q(y)}{9}[d\psi^2 -\cos\theta d\phi^2]\nonumber\\
&&\qquad + w(y)\left[d\alpha + \frac{a-2y+y^2}{6(a-y^2)}[d\psi -\cos\theta d\phi]\right]^2,
\end{eqnarray}
with 
$$w(y) = \frac{2(a-y^2)}{1-y}, q(y) = \frac{a-3y^2+2y^3}{a-y^2}, a\in \R,$$
is a family of Sasaki-Einstein metrics on a   $5$-ball in the variables
$(\phi, \theta, \psi, y, \alpha)$  \cite{gdsw}.  
For $p,q$ relatively prime, $q<p$, and satisfying $4p^2-3q^2 = n^2$ for some integer $n$,  
and for  $a = a(p,q)\in (0,1)$, the metric extends to a $5$-manifold $Y^{p,q}$ which has
the coordinate ball as a dense subset.  
In this case, $( \phi, \theta, \psi, y)$
are  spherical coordinates on $S^2\times S^2$ with a nonstandard metric, and $\alpha$ is the fiber coordinate of an 
$S^1$-fibration $S^1\to Y^{p,q}\to S^2\times S^2.$  
$Y^{p,q}$ is diffeomorphic to $S^2\times S^3,$ and
 has first Chern class which integrates over the two $S^2$ factors to $p+q$ and $p$ \cite[\S2]{gdsw}.
The coordinate ranges are $\phi\in (0,2\pi), \theta \in (0,\pi), \psi\in (0,2\pi)$,
$\alpha\in (0,2\pi\ell)$,
where $\ell = \ell(p,q)$, and 
$y\in (y_1, y_2)$, with the $y_i$ the two smaller roots of $a-3y^2+2y^3=0$.  $p$ and
$q$ determine $a, \ell, y_1, y_2$ explicitly \cite[(3.1), (3.4), (3.5), (3.6)]{gdsw}.

For these choices of $p, q$, we get an $S^1$-action $a$ on $Y^{p,q}$ by rotation in the $\alpha$-fiber.
We claim that for e.g. $(p,q) = (7,3)$,
\begin{equation}\label{neqo} \int_{[a^L]} CS_5^W(g) \neq 0.
\end{equation}
By Proposition \ref{prop:two}(ii), this implies $\pi_1(\diff(S^2\times S^3))$ is infinite.

Set $M = S^2\times S^3$. Since $a^L:M\to LM$ has degree one on its image,
\begin{equation}\label{cswf} 
\int_{[a^L]} CS_5^W(g) = \int_M a^{L,*} CS_5^W(g).
\end{equation}
For $m\in M$, 
$$a^{L,*}CS_5^W(g)_m = f(m)d\phi\wedge d\theta\wedge dy\wedge d\psi\wedge 
d\alpha$$ 
for some $f\in C^\infty(M)$.  We determine
$f(m)$ by explicitly computing $a_{*}^L(\partial_\phi),..., a_{*}^L(\partial_\alpha),$ (e.g. 
$a_{*}^L(\partial_\phi)(a^L(m))(t) = \partial_\phi|_{a(m,t)}$ ),
and noting
\begin{eqnarray}\label{f} f(m) &=& f(m)d\phi\wedge d\theta\wedge dy\wedge d\psi\wedge 
d\alpha(\partial_\phi,\partial_\theta,\partial_y,\partial_\psi,\partial_\alpha)\nonumber\\
&=& a^{L,*}CS_5^W(g)_m(\partial_\phi,...,\partial_\alpha)\\
&=& CS_5^W(g)_{a^L(m)}(a_{*}^L(\partial_\phi),...,a_{*}^L(\partial_\alpha)).\nonumber
\end{eqnarray}
Since $CS^W_5(g)$ is explicitly computable from (\ref{csg}), we can
compute $f(m)$ from (\ref{f}).  Then $\int_{[a^L]} CS_5^W(g) = \int_{M} f(m)
d\phi\wedge d\theta\wedge dy\wedge d\psi\wedge 
d\alpha$ can be computed as an ordinary integral in the dense coordinate space.

Via this method, in the Mathematica file {\tt ComputationsChernSimonsS2xS3.pdf} at {\tt http://math.bu.edu/people/sr/},  $\int_{[a^L]} CS_5^W(g)$ is computed as
 a function of $(p,q).$  For example, for
$(p,q) = (7,3)$,
$$\int_{[a^L]} CS_5^W(g)  = -\frac{3}{5}  \cdot \frac{1849\pi^4}{22050} = -\frac{
1849\pi^4}{37750}.$$
This formula is exact; the rationality up to $\pi^4$ follows from $4p^2-3q^2$ being a perfect
square, as then the various integrals computed in (\ref{cswf}) with respect to our coordinates
are rational functions evaluated at rational endpoints. 
 In particular, (\ref{neqo})
holds.
  \end{proof}

\begin{rem}  
For $a=1$, the metric extends to the closure of the coordinate chart, but the total space is $S^5$ with the standard metric.  For $n\gg 0$,
$\pi_1(\diff(S^n))$ is torsion \cite{F-H}. If this  holds below the stable range, specifically for  $n=5$, then
by Proposition \ref{prop:two}(ii), $\int_{[a^L]} CS^W_5 = 0$
for any circle action on $S^5.$  In the formulas in the Mathematica file, $\int_{[a^L]}CS^W_5$ is proportional to
$(-1+a)^2$, which vanishes at $a=1$.  
This gives some plausibility to the conjecture that $\pi_1(\diff(S^5))$ is torsion.
\end{rem}
\end{exm}

\appendix\section{Proof of Lemma \ref{lemapp}}

Recall that we choose a local orthonormal frame of $\overline {M}_p$ 
of the form $\bar e_1 = \xi, \bar e_i = e_i^L$, $i= 2,3,4,5$, where $\{e_i\}$ is a local orthonormal frame of $M$ and $\bar e_i^L$ are horizontal lifts.  Eventually we will refine the frame of $M$ to be of the form $\{e_2, Je_2, e_3, Je_3\}.$

We wish to simplify (\ref{exp}):

\begin{eqnarray}\label{expa}\lefteqn{a^*CS^W_{5,\gamma}(\eb_1,\ldots,\eb_5) }\nonumber \\
&=&   \frac{3}{5}   
\sum_\sigma \sgn(\sigma)   
\br(a_*\ebo,a_*\eb_\ell, a_*\bxi, a_*\eb_n)\br(a_*\ebt,a_*\ebth, a_*\eb_r,a_*\eb_\ell)
\\
&&\quad \qquad\qquad\cdot \br(a_*\ebf,a_*\ebfi,a_*\eb_n,a_*\eb_r),\nonumber
\end{eqnarray}

We divide the permutations into five cases $A_i$ where $\sigma(i) = 1$, i.e., $\eb_{\sigma(i)} = \bxi.$ 
In formulas like (\ref{expa}), we refer to the curvature terms on the right hand side as the first, second or third term.

We claim that the $A_2$ case contributes zero to the integrand in (\ref{expa}).  Ignoring the factor
 $\frac{3}{5}$, we have
\begin{equation}\label{A2} A_2 = \sum_{\sigma, \sigma(2) = 1} \sgn(\sigma)
\br(\ebo,\eb_\ell, \bxi, \eb_n)\br(\bxi,\ebth, \eb_r,\eb_\ell)\br(\ebf,\ebfi,\eb_n,\eb_r).
\end{equation}
By Prop. \ref{3.7}, in the second term  exactly one of $\eb_r,\eb_\ell$ must be $\bxi$ for a nonzero contribution.
   If $\eb_r = \bxi$, then  the first term must have $\eb_n = \bxi$.  The third term is then zero.  
If $\eb_\ell = \bxi$, then $\eb_r = e_k^L$ and the first term forces $\eb_n = e_n^L$.  By the last equation
in Prop. \ref{3.7}, we have
\begin{eqnarray*} A_2 &=& \sum_{\sigma(2) =1}\sgn(\sigma) \br(e_{\sigma(1)}^L, \bxi, \bxi, e_n^L)
\br(\bxi, e_{\sigma(3)}^L,e_k^L, \bxi) \br(e_{\sigma(4)}^L, e_{\sigma(5)}^L, e_n^L, e_k^L)\\
&=&p^2 \sum_{\sigma(2) =1}\sgn(\sigma) \delta_{\sigma(1),n}\delta_{\sigma(3),k} \br(
e_{\sigma(4)}^L, e_{\sigma(5)}^L,e_{n}^L,e_{k}^L)\\
&=& p^2\sum_{\sigma(2) =1}\sgn(\sigma)\br(e_{\sigma(4)}^L, e_{\sigma(5)}^L,e_{\sigma(1)}^L,e_{\sigma(3)}^L)\\
&=& p^2\sum_{\sigma(2) =1}\sum_{i=2}^4\sum_{\sigma(4) = i}\sgn(\sigma)
\br(e_{\sigma(4)}^L, e_{\sigma(5)}^L,e_{\sigma(1)}^L,e_{\sigma(3)}^L)\\
&=&0,
\end{eqnarray*}
since  by the Bianchi identity, for each $i$ we have
\begin{equation*}
 \sum_{\genfrac{}{}{0pt}{}{\sigma(2) =1}{\sigma(4) = i}}
 \sgn(\sigma)\br(e_{\sigma(4)}^L, e_{\sigma(5)}^L,e_{\sigma(1)}^L,e_{\sigma(3)}^L) = 0.
\end{equation*}
 For $A_3$, again exactly 
one of $\eb_r,\eb_\ell$ must be $\bxi.$  If $\eb_r = \bxi$, then in the third term we must have $\eb_n = \bxi$, which forces this term to vanish. 
If $\eb_\ell = \bxi$, as above we use Bianchi to get zero.  
For $A_4$, either $\eb_n$ or $\eb_r$ equals $\bxi.$ However,  when $\eb_n = \bxi$ 
the first term is zero by Prop.~\ref{3.7}, and when $\eb_r=\bxi$ the third term vanishes for the same reason. Thus 
$A_4 = 0.$  The exact same argument shows that $A_5=0.$  

The $A_1 $ term is
$$A_1 = \sum_{\sigma(1) = 1}\sgn(\sigma) \br(\bxi, \eb_\ell, \bxi, \eb_n) \br(\ebt, \ebth, 
\eb_r, \eb_\ell)\br(\ebf,\ebfi, \eb_n, \eb_r).$$
It is easy to check that if any of $\eb_\ell, \eb_n, \eb_r$ equals $\bxi,$ then the contribution to $A_1$ is zero. By Prop. \ref{3.7} (and denoting $(Je_k)^L$ by $Je_k^L$), we have
\begin{eqnarray*}
A_1 &=& \sum_{\sigma(1) = 1}\sgn(\sigma) \br(\bxi, \el_l, \bxi, \el_n)\br(\elt, \elth, \el_r, \el_\ell)
\br(\elf, \elfi, \el_n, \el_r)\\
&=& - p^2 \ssum \delta_{\ell n}\br(\elt, \elth, \el_r, \el_\ell)
\br(\elf, \elfi, \el_n, \el_r)\\
&=& p^2\ssum \br(\elt, \elth, \el_r, \el_\ell)\br(\elf, \elfi, \el_r, \el_\ell)\\
&=& p^2\ssum \br(\elt, \elth, \el_r, \el_\ell)\left[ R(\eef, \eefi, e_r, e_\ell) -p^2\langle J\eef, e_l\rangle
\langle J\eefi, e_r\rangle\right.\\
&&\quad \left.  +p^2\langle J\eef, e_r\rangle\langle J\eefi, e_\ell\rangle + 
2p^2\langle J\eef, \eefi\rangle\langle Je_r, e_\ell\rangle
 \right]\\
 &=& p^2\ssum  \br(\elt, \elth, \el_r, \el_\ell)R(\eef, \eefi, e_r, e_\ell) -p^2 \br(\elt, \elth, J\elfi, J\elf)\\
 &&\quad +  p^2 \br(\elt, \elth, J\elf, J\elfi)
 +2p^2\br(\elt, \elth, e_r^L, J\el_r)\langle J\eef, \eefi\rangle\\
 &=& p^2\ssum \br(\elt, \elth, \el_r, \el_\ell)R(\eef, \eefi, e_r, e_\ell) + 2p^2 \br(\elt, \elth, J\elf, J\elfi)\\
&&\quad  +2p^2\br(\elt, \elth, e_r^L, J\el_r)\langle J\eef, \eefi\rangle
\end{eqnarray*}

Denote the three terms on the right hand side of the last expression by $\aoa, \aob, \aoc.$   Up to a constant, we have
\begin{eqnarray*} \aob &=& \br(\elt, \elth, J\elf, J\elfi)\\
&=& R(\eet, \eeth,J\eef, J\eefi) - p^2\la J\eet, J\eefi\ra \la J\eeth, J\eef\ra\\
&&\quad + p^2\la J\eet, J\eef\ra \la J\eeth, J\eefi\ra
+ 2p^2 \la J\eet, \eeth\ra \la J^2\eef, J\eefi\ra\\
&=& R(\eet, \eeth,J\eef, J\eefi)  -2p^2 \la J\eet, \eeth\ra \la \eef, J\eefi\ra,
\end{eqnarray*}
since $ \la J\eeth, J\eef\ra = \la J\eeth, J\eefi\ra = 0$.
Also,
\begin{eqnarray*} \aoc &=& \br(\elt, \elth, e_r^L, J\el_r)\la J\eef, \eefi\ra\\
&=&R(\eet, \eeth, e_r, Je_r)\langle J\eef, \eefi\ra - p^2\la J\eet, Je_r\ra\la J\eeth, e_r\ra\la J\eef, \eefi\ra\\
&&\quad +p^2 \la J\eet, e_r\ra\la J\eeth, Je_r\ra\la J\eef, \eefi\ra +2p^2
\la J\eet, \eeth\ra \la Je_r, Je_r\ra \la J\eef, \eefi\ra\\
&=& R(\eet, \eeth, e_r, Je_r)\langle J\eef, \eefi\ra- p^2\la \eet, e_r\ra\la J\eeth, e_r\ra\la J\eef, \eefi\ra\\
&&\quad +p^2 \la J\eet, e_r\ra\la \eeth, e_r\ra\la J\eef, \eefi\ra +8p^2
\la J\eet, \eeth\ra  \la J\eef, \eefi\ra,
\end{eqnarray*}
since $\sum_r \la Je_r, Je_r\ra = 4.$  The second and third terms in the last expression are equal, as
$$\la \eet, e_r\ra\la J\eeth, e_r\ra = \la \eet, J\eeth\ra,\ \la J\eet, e_r\ra\la \eeth, e_r\ra
= \la J\eet, \eeth\ra.$$
Thus
\begin{eqnarray*} \aoc &=& R(\eet, \eeth, e_r, Je_r)\langle J\eef, \eefi\ra + 2p^2
\la J\eet, \eeth\ra \la J\eef, \eefi\ra\\
&&\qquad +8p^2 \la J\eet, \eeth\ra  \la J\eef, \eefi\ra\\
&=& R(\eet, \eeth, e_r, Je_r)\langle J\eef, \eefi\ra + 10p^2 \la J\eet, \eeth\ra  \la J\eef, \eefi\ra.
\end{eqnarray*}
Finally,
\begin{eqnarray*}\aoa &=& \br(\elt, \elth, \el_r, \el_\ell)R(\eef, \eefi, e_r, e_\ell) \\
&=& \left[ R(\eet, \eeth, e_r, e_\ell) - p^2\la J\eet, e_\ell\ra \la J\eeth, e_r\ra   +
p^2 \la J\eet, e_r\ra \la J\eeth, e_\ell\ra\right.\\
&&\qquad \left. + 2p^2\la J\eet, \eeth\ra\la Je_r, e_\ell\ra\right] R(\eef, \eefi, e_r, e_\ell)\\
&=& R(\eet, \eeth, e_r, e_\ell)R(\eef, \eefi, e_r, e_\ell) - p^2 R(\eef, \eefi, J\eeth, J\eet)\\
&&\qquad +p^2 R(\eef, \eefi, J\eet, J\eeth) +2p^2 \la J\eet, \eeth\ra R(\eef, \eefi, e_r, Je_r)\\
&=& R(\eet, \eeth, e_r, e_\ell)R(\eef, \eefi, e_r, e_\ell) + 2p^2 R(\eef, \eefi, J\eet, J\eeth)\\
&&\quad +2p^2 \la J\eet, \eeth\ra R(\eef, \eefi, e_r, Je_r).
\end{eqnarray*}

Plugging $\aoa, \aob, \aoc$ into $A_1$, we get
\begin{eqnarray*} A_1 &=& p^2 \ssum \left\{\left[ R(\eet, \eeth, e_r, e_\ell)R(\eef, \eefi, e_r, e_\ell) + 2p^2 R(\eef, \eefi, J\eet, J\eeth)\right.\right.\\
&&\quad\left. \left. +2p^2 \la J\eet, \eeth\ra R(\eef, \eefi, e_r, Je_r)\right]\right.\\
&&\quad +\left. \left[ 2p^2R(\eet, \eeth, J\eef, J\eefi)
-4 p^4\la J\eet, \eeth\ra\la \eef, J\eefi\ra\right]\right.\\
&&\quad +\left.\left[ 2p^2 R(\eet, \eeth, e_r, Je_r)\la J\eef, \eefi\ra 
+20p^4\la J\eet, \eeth\ra  \la J\eef, \eefi\ra\right]\right\}\\
&=& p^2\ssum\left\{ R(\eet, \eeth, e_r, e_\ell)R(\eef, \eefi, e_r, e_\ell) + 4p^2 R(\eef, \eefi, J\eet, J\eeth)\right.\\
&&\quad\left. + 2p^2 \la J\eet, \eeth\ra R(\eef, \eefi, e_r, Je_r)
+2p^2R(\eet, \eeth, e_r, Je_r)\la J\eef, \eefi\ra \right. \\
&&\quad \left.+24p^4\la J\eet, \eeth\ra  \la J\eef, \eefi\ra\right\}.
\end{eqnarray*}

Denote the five terms in the last expression by $b_1,\ldots, b_5.$  Note that
\begin{equation}\label{b1}b_1= -4\tr(\Omega\wedge \Omega)(e_2, Je_2, e_3, Je_3) = 32\pi^2 p_1(\Omega)(e_2, Je_2, e_3, Je_3),
\end{equation} 
where $p_1(\Omega)$ is the first Pontrjagin form.  (As noted before, since the orbits of $\bxi$ have constant length $\lambda$ \cite[p. 37]{blair}, we have
\begin{equation}\label{sig} \int_{\bmk} b_1 \dvol_{\overline M_p}= 32\pi^2 \lambda \int_M p_1(\Omega) = 96\pi^2 \lambda \sigma(M),
\end{equation}
where $\sigma(M)$ is the signature of $M$.)

Consider $\frac{1}{4p^2}b_2 = \ssum R(\eet, \eeth, J\eef, J\eefi)$.  As in the statement of Lemma~\ref{lemapp}, choose an orthonormal frame $\{e_2, Je_2, e_3, Je_3\}.$ The
$24$ permutations of $\{2,3,4,5\}$ break into three groups of eight permutations, such that \\
$\sgn(\sigma) R(\eet, \eeth, J\eef. J\eefi)$ is constant on each group.  For example, the permutations 
\begin{equation}\label{perm1}{\rm id}, (23), (23)(45), (45), (24)(35), (2534), (2435), (25)(34)
\end{equation}
have, for $\sigma = {\rm id}$, 
\begin{eqnarray*} \sgn(\sigma) R(\eet, \eeth, J\eef, J\eefi) &=& R(e_2, Je_2, Je_3, J^2e_3) = R(e_2, Je_2, Je_3,-e_3)\\
 & =& R(e_2, Je_2, e_3, Je_3),
 \end{eqnarray*}
and have, for $\sigma = (25)(34)$, 
\begin{eqnarray*} \sgn(\sigma) R(\eet, \eeth, J\eef, J\eefi) &=& R(Je_3, e_3, J^2e_2, Je_2)
= -R(e_2, Je_2, Je_3, e_3)\\
& =& R(e_2, Je_2, e_3, Je_3).
\end{eqnarray*}
In fact, all eight permutations in this group give 
\begin{equation}\label{perm2}\sgn(\sigma) R(\eet, \eeth, J\eef, J\eefi) = R(e_2, Je_2, e_3, Je_3).
\end{equation}
  The eight permutations consisting of the product of $(34)$ with the eight permutations in (\ref{perm1}) 
  (with $(34)$ concatenated on the right) give
  \begin{equation}\label{perm3} \sgn(\sigma) R(\eet, \eeth, J\eef, J\eefi) = 
  -R(e_2, e_3, e_2, e_3),
  \end{equation}
  and the eight permutations consisting of the product of $(354)$ with 
  the eight permutations in (\ref{perm1}) give
  \begin{equation}\label{perm4} \sgn(\sigma) R(\eet, \eeth, J\eef, J\eefi) = 
  -R(e_2, Je_3, e_2, Je_3).
  \end{equation}
  Combining (\ref{perm2}), (\ref{perm3}), (\ref{perm4}) gives
 \begin{equation}\label{b2}  b_2 =32p^2[R(e_2, Je_2, e_3, Je_3) 
 -R(e_2, e_3, e_2, e_3) -  R(e_2, Je_3, e_2, Je_3)].
 \end{equation}
 
For $b_3$, up to a factor the permutations ${\rm id}, (23), (45), (23)(45)$ contribute \\
$R(e_3, Je_3, e_\ell, Je_\ell)$, the permutations $(25)(34), (24)(35), (2534), (2435)$ contribute\\
$R(e_2, Je_2, e_\ell, Je_\ell)$, and all other permutations contribute zero.  Thus
$$ b_3 = 8p^2[R(e_2, Je_2, e_\ell, Je_\ell) +
 R(e_3, Je_3, e_\ell, Je_\ell)].
$$
Letting
$e_\ell$ run through $\{e_2, Je_2, e_3, Je_3\}$ gives
\begin{equation}\label{b3} b_3 = 16p^2[R(e_2, Je_2, e_2, Je_2) + 2R(e_2, Je_2, e_3, Je_3) + 
R(e_3, Je_3, e_3, Je_3)].
\end{equation}
Using the same grouping of permutations as for $b_3$, we get the same contribution as $b_3$, but with the roles of $e_2$ and $e_3$ switched:
$$b_4 = 8p^2[R(e_3, Je_3, e_\ell, Je_\ell) +
 R(e_2, Je_2, e_\ell, Je_\ell)].$$
 This is the same as $b_3$, so 
\begin{equation} \label{b4} b_4 = b_3= 
16p^2[R(e_2, Je_2, e_2, Je_2) + 2R(e_2, Je_2, e_3, Je_3) + 
R(e_3, Je_3, e_3, Je_3)].
\end{equation}

Combining (\ref{b2}), (\ref{b3}), (\ref{b4}) gives
\begin{eqnarray}\label{b234}
\lefteqn{b_2+b_3+b_4}\nonumber\\ 
&=& 32p^2[3R(e_2, Je_2, e_3, Je_3) 
 -R(e_2, e_3, e_2, e_3) -  R(e_2, Je_3, e_2, Je_3)\\
 &&\quad + R(e_2, Je_2, e_2, Je_2) + 
 R(e_3, Je_3, e_3, Je_3)].\nonumber 
\end{eqnarray}

Finally, up to a factor $b_5$ contributes one for each permutation in (\ref{perm1}) and zero otherwise, so
\begin{equation}\label{b5} b_5 
=192p^4.
\end{equation}

\bigskip
\noindent {\bf Summary:}  
The Chern-Simons form (\ref{expa}) reduces to calculating the term $A_1 = b_1 + \ldots + b_5.$  By
(\ref{b1}), (\ref{b234}), (\ref{b5}), this becomes
\begin{eqnarray*}\label{bsum}\lefteqn{a^*CS^W_{5,\gamma}(\bxi, e_2, Je_2, e_3, Je_3) }\\
&=&   \frac{3p^2}{5}    
 \left\{ 32\pi^2 p_1(\Omega)(e_2, Je_2, e_3, Je_3)
+ 32p^2[3 R(e_2, Je_2, e_3, Je_3) -R(e_2, e_3, e_2, e_3)   \right.\nonumber\\
&&\quad \left. 
-  R(e_2, Je_3, e_2, Je_3)  + R(e_2, Je_2, e_2, Je_2)+ R(e_3, Je_3, e_3, Je_3)]\right.\nonumber\\
 &&\quad \left.+ 192p^4 \right\}\nonumber
\end{eqnarray*}

\section{Topologies on $LM$}

Throughout the paper, we take Sobolev topologies on mapping spaces like $LM$ and $\diffm$ instead of the more standard Fr\'echet topology.  In this Appendix, we collect results to prove
 that topological results like Thm. \ref{bigthm} are independent of the choice of topology.

\begin{lem}  Let $L^sM$ denote the loops on $M$ of Sobolev class $s$, and let $L^FM$ denote $LM$ with the Fr\'echet topology.  For all $s>s'> 1/2$, $L^sM$ and $L^{s'}M$ 
are diffeomorphic smooth Hilbert manifolds.
Moreover, $L^FM$ is 
homotopy equivalent to $L^sM$.  ${\rm Diff}^s(M)$, ${\rm Diff}^{s'}(M)$ and  ${\rm Diff}^F(M)$ are homotopy equivalent in the similar notation.  

\end{lem}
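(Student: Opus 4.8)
The plan is to first show that the relevant inclusions are homotopy equivalences, via an explicit smoothing construction together with Whitehead's theorem, and then, in the Hilbert case, to promote homotopy equivalence to diffeomorphism using the classification of infinite-dimensional Hilbert manifolds.

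First I would recall from the theory of manifolds of maps (Palais, Eliasson, Eells) that for each $s>1/2$ the space $L^sM=H^s(S^1,M)$ is a smooth, separable, metrizable Hilbert manifold: near a smooth loop $\gamma_0$ one has the chart $X\mapsto\exp_{\gamma_0}(X)$ modeled on the separable Hilbert space $H^s(S^1,\gamma_0^*TM)$, and the Sobolev embedding $H^s(S^1)\hookrightarrow C^0(S^1)$ --- valid exactly because $s>1/2=\tfrac12\dim S^1$ --- is what makes the transition maps smooth. Replacing $H^s$ by $C^\infty$ exhibits $L^FM$ as a Fr\'echet manifold in the same way. In particular $L^sM,\ L^{s'}M,\ L^FM$ are metrizable manifolds modeled on locally convex spaces, hence are absolute neighborhood retracts, and so by Milnor's theorem they have the homotopy type of CW complexes.

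Next I would construct a smoothing operator to compare the topologies. Fix an isometric embedding $M\hookrightarrow\R^N$ with a tubular neighborhood $U$ and smooth retraction $r:U\to M$, and let $K_t$ ($t>0$) be a mollifier on $S^1$, e.g.\ the heat operator $e^{-t\Delta}$. For $\gamma\in L^{s'}M\subset H^{s'}(S^1,\R^N)$ set $\gamma_t=K_t*\gamma$ and $S_t(\gamma)=r\circ\gamma_t$; since mollification is smoothing and $r$ is smooth, $S_t(\gamma)$ is a smooth loop on $M$ whenever $\gamma_t$ has image in $U$, and $S_0=\mathrm{id}$. To see that $\iota:L^sM\hookrightarrow L^{s'}M$ induces isomorphisms on homotopy groups I would argue on compact families: a map of pairs $f:(D^k,\partial D^k)\to(L^{s'}M,L^sM)$ has compact image, hence a uniform modulus of continuity, so $\gamma_t$ stays in $U$ and $S_t(\gamma)\to\gamma$ uniformly over the family as $t\to0$. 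Since $S_t$ sends $L^sM$ into $L^sM$, the assignment $(t,x)\mapsto S_t(f(x))$ is a homotopy of pairs that pushes $f$ into $L^sM$, so $\pi_k(L^{s'}M,L^sM)=0$; thus $\iota$ is a weak homotopy equivalence, and the same construction with target $C^\infty$ handles $L^FM\hookrightarrow L^sM$. Since source and target have CW homotopy type, Whitehead's theorem upgrades each weak equivalence to a genuine homotopy equivalence.

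Finally I would promote homotopy equivalence to diffeomorphism in the Hilbert case by invoking the classification theorem of Henderson, Kuiper and Eells--Elworthy: two smooth separable metrizable manifolds modeled on the same infinite-dimensional Hilbert space are diffeomorphic as soon as they are homotopy equivalent. As $L^sM$ and $L^{s'}M$ are such manifolds and both are homotopy equivalent to $L^FM$, they are diffeomorphic. I expect this to be the crux of the argument, since it is the one place where ``homotopy equivalent'' is turned into ``diffeomorphic,'' and it rests on a nontrivial external theorem rather than on anything special to loop spaces. The diffeomorphism-group statements follow by the same scheme: for $s>\tfrac12\dim M+1$ one has $H^s(M,M)\hookrightarrow C^1(M,M)$, so ${\rm Diff}^s(M)$ is open in the Hilbert manifold $\maps^s(M,M)$ and is itself a smooth separable Hilbert manifold (and ${\rm Diff}^F(M)$ is open in the Fr\'echet manifold $\maps^F(M,M)$); the mollify-and-project smoothing, now applied to self-maps of $M$, provides the weak equivalences, the only extra point being that a $C^1$-small perturbation of a diffeomorphism of the compact manifold $M$ is again a diffeomorphism, which holds uniformly over compact families. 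Whitehead's theorem and the Hilbert classification then yield the asserted homotopy equivalences of ${\rm Diff}^s(M)$, ${\rm Diff}^{s'}(M)$ and ${\rm Diff}^F(M)$.
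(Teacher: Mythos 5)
Your proposal is correct, but it reaches the conclusions by a noticeably different route than the paper. For the homotopy equivalences, the paper constructs an explicit homotopy inverse: the inclusion $i:L^FM\to L^sM$ is paired with a smoothing map $r:L^sM\to L^FM$ obtained from the Whitney-type approximation theorem (\cite[Thm.~10.21]{lee}), and $r\circ i\sim\mathrm{Id}$, $i\circ r\sim\mathrm{Id}$ are exhibited by letting the approximation parameter $\delta\to 0$; for the diffeomorphism groups the paper instead quotes Palais's theorem that a continuous inclusion with dense image restricts to a homotopy equivalence on preimages of open sets, applied to $V={\rm Diff}^{s'}(M)$ open in $\maps^{s'}(M,M)$. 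You replace both steps by a mollify-and-project operator $S_t$ proving weak homotopy equivalence on compact families, then upgrade via the ANR/CW-homotopy-type observation and Whitehead's theorem; this is more work but makes the ``$\delta\to 0$'' homotopy, which the paper treats rather breezily, completely explicit, and it handles the loop spaces and the diffeomorphism groups uniformly (your remark that a $C^1$-small perturbation of a diffeomorphism of a compact manifold is again a diffeomorphism is exactly the point needed to stay inside ${\rm Diff}$). The largest divergence is the first assertion: the paper derives the diffeomorphism $L^sM\cong L^{s'}M$ directly from Eells's manifolds-of-maps framework (\cite[Ch.~6(E)]{eells}), whereas you obtain it from the Henderson--Kuiper--Eells--Elworthy classification of separable infinite-dimensional Hilbert manifolds (homotopy equivalent $\Rightarrow$ diffeomorphic). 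Your route buys a conceptually clean reduction of ``diffeomorphic'' to ``homotopy equivalent,'' at the cost of invoking a deep external theorem where the paper leans on the standard global-analysis literature; both are legitimate.
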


The proof applies to $\maps^s(N,M)$ for any compact manifold $N$.

\begin{proof} It follows from \cite[Ch. 6(E)]{eells} that $L^sM$ and $L^{s'}M$ 
are diffeomorphic smooth Hilbert manifolds.

The inclusion $i:L^FM\to L^sM$ is continuous. 
Put a Riemannian metric on $M$.
  As in the proof of Lemma 3.3, given $\delta >0$ and $\gamma\in L^sM$, there exists a loop $\gamma'\in L^FM$ with $d_M(\gamma(\theta),\gamma'(\theta) )<\delta$ for all $\theta.$  
  The construction of $\gamma'$ in  \cite[Thm. 10.21]{lee} shows that the map $r:L^sM\to L^FM$, 
  $r(\gamma) = \gamma'$ is continuous.  
 We have $r\circ i\sim {\rm Id}_{L^FM}$ with the homotopy given by letting $\delta\to 0.$
 Similarly, 
$i\circ r\sim {\rm Id}_{L^sM}.$  Thus $L^sM, L^FM$ are 
homotopy equivalent.  

For ${\rm Diff}^s(M)$, let $i:L^s(M)\to L^{s'}(M)$ be the inclusion. $i$ is continuous with dense image.  $V = {\rm Diff}^{s'}(M)$ is open
in $L^{s'}M$, and $i^{-1}(V) = {\rm Diff}^s(M).$  By \cite[Cor. to Thm. 12]{palais}, $i$ is a homotopy equivalence.  The same proof works for $i:{\rm Diff}^F(M)\to {\rm Diff}^s(M).$
\end{proof}

\bibliographystyle{amsplain}
\bibliography{Paper}

\bigskip
\hfill \today \\

\end{document}